\theoremstyle{plain}
\newtheorem{theorem}{Theorem}
\newtheorem{lemma}[theorem]{Lemma}
\newtheorem{proposition}[theorem]{Proposition}
\theoremstyle{definition}
\newtheorem{definition}[theorem]{Definition}
\newtheorem{remark}[theorem]{Remark}
\theoremstyle{remark}
\def\namedlabel#1#2{\begingroup
    #2%
    \def\@currentlabel{#2}%
    \phantomsection\label{#1}\endgroup
}
\newcommand{\N}{\mathbb N}
\newcommand{\R}{\mathbb R}
\newcommand{\ca}{\mathbb{1}}
\renewcommand{\ln}{{\mathcal{L}^N}}
\newcommand{\hno}{{\mathcal H}^{N-1}}
\newcommand{\dd}{\,\mathrm{d}}
\newcommand{\e}{\varepsilon}
\renewcommand{\o}{\Omega}
\def\ca{\mathbbmss{1}}
\newcommand{\average}{{\mathchoice {\kern1ex\vcenter{\hrule height.4pt
width 6pt
depth0pt} \kern-9.7pt} {\kern1ex\vcenter{\hrule height.4pt width 4.3pt
depth0pt}
\kern-7pt} {} {} }}
\def\Xint#1{\mathchoice
{\XXint\displaystyle\textstyle{#1}}%
{\XXint\textstyle\scriptstyle{#1}}%
{\XXint\scriptstyle\scriptscriptstyle{#1}}%
{\XXint\scriptscriptstyle\scriptscriptstyle{#1}}%
\!\int}
\def\XXint#1#2#3{{\setbox0=\hbox{$#1{#2#3}{\int}$ }
\vcenter{\hbox{$#2#3$ }}\kern-.6\wd0}}
\def\dashint{\Xint-}
\title{Phase separation in multiply periodic materials with fine microstructures}
\author{Riccardo Cristoferi, Luca Pignatelli}
\begin{document}
\maketitle

\begin{abstract}
    We study a Cahn-Hilliard model for phase separation in composite materials with multiple periodic microstructures.
    These are modeled by considering a highly oscillating potential.
    The focus of this paper is in the case where the scales of the microstructures are smaller than that of phase separation.
    We provide a compactness result and prove that the $\Gamma$-limit of the energy is a multiple of the perimeter.
    In particular, using the recently introduced unfolding operator for multiple scales, we show that taking the limit of all the scales together is equivalent to taking one limit at the time, starting from the smaller scale and keeping the larger fixed.
\end{abstract}

\section{Introduction}

Composite materials are everywhere in natural (bones, blood, ice, wood) and in synthetic products (foam, colloids, concrete, elastomers).
Therefore, there is a huge interest in understanding how to obtain an effective description of chemical, physical, and mechanical properties that are essential for applications, such as conductivity, stiffness, permeability.
The complex interactions between the different components result in a dependence of these effective properties on nontrivial details of the microstructure.
In this manuscript, we focus on understanding the distribution of phases at stable equilibrium in a composite material with two periodic microstructures (both larger than the molecular scale) at different scales.\\

For a single material under isothermal conditions, the classical model used to describe stable configurations of phases in a liquid-liquid separation is the celebrated van der Waals--Cahn--Hilliard (also known as Modica--Mortola) functional defined as follows:
    \begin{equation}\label{eq:allen_cahn_iso}
        F^{(0)}_\varepsilon (u) \coloneqq 
            \int_{\Omega} \left[ W \!\left(u \right) + \varepsilon^2 | \nabla u |^2 \right] \! \dd x
    \end{equation}
for $u \in W^{1,2}(\Omega; \R^M)$.
Here, $\Omega\subset\R^N$ is an open bounded set with Lipschitz boundary, $\varepsilon>0$ is a small parameter, and the continuous function $W:\R^M\to[0,+\infty)$ is the material dependent free energy density with suitable growth at infinity and vanishing at two points (the wells) $a,b\in\R^M$. These latter correspond to the stable phases.
The main goal of the analysis is to understand the asymptotic behavior, as $\varepsilon$ vanishes, of miminizers of $F_\varepsilon$ under a mass constraint of the form
    \begin{equation*}
        \dashint_{\Omega} u \, \dd x = m a + (1 - m) b, \qquad m  \in (0, 1).
    \end{equation*}
Using the expansion by $\Gamma$-convergence (see \cite{anzellotti1993asymptotic, DGFra}), it is has been proved (see \cite{CarGurSle, ModMor_Esempio, modica1987gradient, Ste_Vect, Ste_Sing, fonseca1989gradient}) that
\begin{equation}\label{eq:Gamma_expansion}
F^{(0)}_\varepsilon \approx F^{(0)}_\infty + \varepsilon F^{(1)}_\infty,
\end{equation}
where
\[
F^{(0)}_\infty(u) \coloneqq \int_\Omega W(u) \dd x,
\]
and
\begin{equation}\label{eq:latter_functional}
	F^{(1)}_\infty(u)\coloneqq \sigma |Du|(\Omega),
\end{equation}
for $u\in BV(\Omega;\{a,b\})$, namely functions of bounded variations taking values in the set $\{a,b\}$.
Here, the \emph{surface tension} is defined as
\begin{equation}\label{eq:sigma}
\sigma\coloneqq 
    \inf \left\{ \int_{-1}^1 2 \sqrt{W ( \gamma(t))} \, | \gamma^{'} (t)| \, \dd t : \gamma \in \text{Lip}([-1,1]; \R^M ), \gamma(-1)=a, \gamma(1)=b \right\}.
\end{equation}
This latter functional \eqref{eq:latter_functional} is the $\Gamma$-limit in a suitable $L^p$ topology, depending on the growth of $F$ at infinity, of
\[
F^{(1)}_\varepsilon\coloneqq \frac{F^{(0)}_\varepsilon - \min F^{(0)}_\infty}{\varepsilon}  
    =\frac{F^{(0)}_\varepsilon}{\varepsilon}.
\]
The expansion in \eqref{eq:Gamma_expansion} reads as follows: minimizers of $F^{(0)}_\varepsilon$ converges to minimizers of $F^{(1)}_\infty$, which are also minimizers of $F^{(0)}_\infty$, at an `energy-rate' $\varepsilon$.
Moreover, it turns out that the typical minimizer $u_\varepsilon$ of $F^{(0)}_\varepsilon$ is a function with the following structure: take a set $E\subset \Omega$ such that the function $u\coloneqq a\ca_{E} + b\ca_{\Omega\setminus E}$ minimizes the functional $F^{(1)}_\infty$ under the mass constraint $|E|=m|\Omega|$.
Consider an $\varepsilon$-tubular neighborhood $(\partial E)_\varepsilon$ of the boundary of $E$. Then, $u_\varepsilon\in H^1(\Omega;\R^M)$ is equal to $a$ in $E\setminus (\partial E)_\varepsilon$, to $b$ in $\Omega\setminus (E\cup (\partial E)_\varepsilon)$, and has an optimal transition between those two values in $(\partial E)_\varepsilon$ that resembles the optimal profile that solves the minimization problem defining $\sigma$ in \eqref{eq:sigma}. In particular, the transition region has size of order $\varepsilon$.
Therefore, the expansion \eqref{eq:Gamma_expansion} yields an approximation both of minimizers and of their energy.

Several extensions of the model \eqref{eq:allen_cahn_iso} have been studied over the years. Here, we limit ourselves to recall those considered by Baldo in \cite{baldo1990minimal} for the case of multiple wells, by Barroso and Fonseca in \cite{BarFon} for general singular perturbations, and by Owen and Sternberg in \cite{OweSte} and by Fonseca and Popovici in \cite{FonPop} for the case of fully coupled integrands.
For a more complete review of the results on the topic, we refer the reader to the Introduction of \cite{CriGra}.\\

In the case the material has \emph{macroscopic} heterogeneities, for instance it is not in an isothermal case, the functional \eqref{eq:allen_cahn_iso} has to  be modified by taking into consideration the different response of the material at any given point to a specific phase. Namely, we consider the functional
    \begin{equation}\label{eq:allen_cahn_mov}
        F^{(0)}_\varepsilon (u) \coloneqq
            \int_{\Omega} \left[ W \!\left(x, u \right) + \varepsilon^2 | \nabla u |^2 \right] \! \dd x,
    \end{equation}
for $u \in W^{1,2}(\Omega; \R^M)$, where $W:\Omega\times \R^M\to[0,\infty)$ is a continuous function with a suitable growth at infinity and close to its wells such that $W(x,p)=0$ if and only if $p\in\{a(x), b(x)\}$, where $a,b:\Omega\to\R^M$ are Lipschitz functions.
Also in such a case, an expansion of the form \eqref{eq:Gamma_expansion} is possible (see \cite{Bou} for the scalar case, \cite{CriGra} for the vectorial case, and also \cite{CriFonGan_moving_sub} for a weaker sets of assumptions on the behavior of the potential $W$ close to the wells), where now
\[
F^{(1)}_\infty(u) \coloneqq \int_{J_u} \sigma(x)\, \dd\mathcal{H}^{N-1}(x),
\]
for $u\in BV(\Omega;\{a,b\})$.
Here, $BV(\Omega;\{a,b\})$ is the space of functions $u:\Omega\to\R^M$ of bounded variations such that $u(x)\in\{a(x), b(x)\}$ for a.e. $x\in\Omega$, $J_u$ denotes the jump set of the function $u$, and
\begin{equation}\label{eq:sigma_mov}
\sigma(x) \coloneqq \inf \left\{ \int_{-1}^1 2 \sqrt{W(x, \gamma(t))} \, | \gamma^{'} (t)| \, \dd t : \gamma \in \text{Lip}([-1,1]; \R^M ), \gamma(-1)=a(x), \gamma(1)=b(x) \right\}.
\end{equation}
Namely, the function $\sigma$ is the analogous of \eqref{eq:sigma} when we `freeze' the point $x\in J_u$. Therefore, we say that the function defined in \eqref{eq:sigma_mov} is the \emph{heterogeneous surface tension}.\\

We now enter into the realm of the modeling of phase separation in composite materials.
We consider the case where the material has a periodic microstructure with scale $\delta>0$ (see Figure \ref{fig:one_scale} to the left).
The natural modification of the functional \eqref{eq:allen_cahn_iso} yields
    \begin{equation}\label{eq:allen_cahn_one}
        F^{(0)}_{\varepsilon,\delta} (u) \coloneqq 
            \int_{\Omega} \left[ W \!\left(\cfrac{x}{\delta}, u \right)
                + \varepsilon^2 | \nabla u |^2 \right] \! \dd x. 
    \end{equation}
To model the periodicity models the periodic structure inside the periodicity cell $Q_1$ we require the potential $W:\Omega\times \R^M\to[0,\infty)$ to be a Carath\'{e}odory function that is $Q_1$-periodic in the first variable, with suitable growth at infinity, and with wells at $a,b\in\R^M$. In particular, the low regularity in the first variable is necessary in order to consider the case of a microstructure with materials inclusions $E\subset Q_1$ (see Figure \ref{fig:one_scale} to the right). In such a case, the potential $W$ might jump from one material to another.

    \begin{figure}
        \centering
        \includegraphics[width=0.95\linewidth]{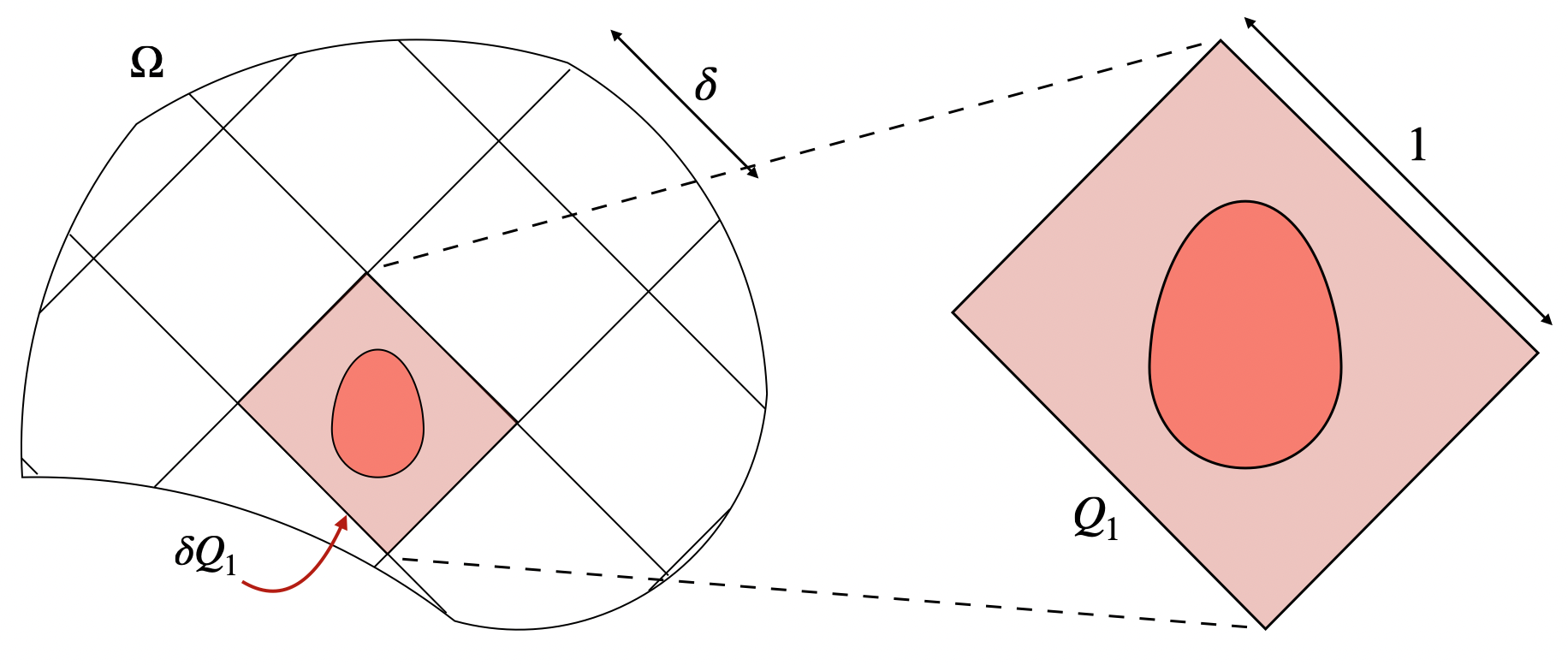}
        \caption{Left: A composite material with a periodic microstructure.
        Right: A microstructure with materials inclusions. 
        Different colors correspond to different materials.}
        \label{fig:one_scale}
    \end{figure}

Obtaining an expansion of the form \eqref{eq:Gamma_expansion} is now more challenging, due to the presence of two parameters in the problem.
Indeed, there is a competition between the process of homogenization of the periodic structure and that of transition between the stable phases. The former happening at a scale $\delta$, while the latter at an (expected) scale $\varepsilon$.
Therefore, the problem naturally gives three regimes:
\[
\delta\ll\epsilon\quad\quad\quad\quad
\delta\approx\epsilon\quad\quad\quad\quad
\epsilon\ll\delta.
\]
The zeroth order in the expansion by $\Gamma$-convergence yields a functional of the form
\[
F^{(0)}_\infty(u)\coloneqq \int_\Omega W_r(u)\dd x,
\]
where the bulk energy density $W_r:\R^M\to[0,\infty)$ depends on which of the three above regimes we are in. Nevertheless, it always hold that
\[
\min\left\{ F^{(0)}_\infty(u) : \dashint_{\Omega} u\dd x = ma +(1-m)b \right\} = 0.
\]
The interesting term is $F^{(1)}_\infty$.

In the first regime, the homogenization process happens at a smaller scale then the phase separation one. Therefore, we expect the combined limit to be equivalent to first sending $\delta\to0$ while keeping $\varepsilon$ fixed, and then to send $\varepsilon\to0$, namely first homogenize and then do phase separation.
This was confirmed in \cite{CriFonGan_fixed_Sup}, where it was proved that
\[
F^{(1)}_\infty(u)\coloneqq \sigma^{\mathrm{h}} |Du|(\Omega).
\]
for $u\in BV(\Omega;\{a,b\})$, where
\begin{equation}\label{eq:sigma_regime_1}
\sigma^{\mathrm{h}}\coloneqq \inf \left\{ \int_{-1}^1 2 \sqrt{W^{\mathrm{h}} ( \gamma(t))} \, | \gamma^{'} (t)| \, \dd t : \gamma \in \text{Lip}([-1,1]; \R^M ), \gamma(-1)=a, \gamma(1)=b \right\},
\end{equation}
and
\[
W^\mathrm{h}(u)\coloneqq \dashint_{Q_1} W(y,u)\dd y
\]
is the averaged potential, which can be seen as the homogenization of $W$ with respect to the strong $L^1$ convergence.
Note that the surface energy $F^{(1)}_\infty$ is isotropic.

In the second regime, namely when $\delta\approx\varepsilon$, the two physical processes interact at the same scale, and therefore a complex formula for the limiting energy
\begin{equation}\label{eq:limiting_F_regime_2}
F^{(1)}_\infty(u) \coloneqq \int_{J_u} \sigma(\nu_u)\dd \mathcal{H}^{N-1},
\end{equation}
for $u\in BV(\Omega;\{a,b\})$ is expected. Here, $\nu_u\in\mathbb{S}^{N-1}$ denotes the measure theoretical exterior unit normal to $\{u=a\}$.
The precise formula for $\sigma$, in the case $\delta = L \varepsilon$, was obtained in \cite{CriFonHagPop}, and reads as
\begin{equation}\label{eq:sigma_regime_2}
\sigma_L(\nu) \coloneqq \lim_{T\to\infty} \frac{1}{T^{N-1}}
    \inf\left\{ \int_{T Q_\nu} \left[ \frac{1}{L} W(x,u) + L|\nabla u|^2 \right] \dd z :
    u\in \mathcal{A}_T
    \right\},
\end{equation}
and
\[
\mathcal{A}_T\coloneqq \left\{ u\in H^1(TQ_\nu;\R^M),\, u= \rho\ast u_\nu \text{ on } \partial(TQ_\nu) \right\},
\]
where $Q_\nu$ is a cube with two faces orthogonal to $\nu$, $\rho$ is a standard mollifier, and $u_\nu(x)$ equals $a$ if $x\cdot\nu>0$, and $b$ otherwise.
It is worth noticing that $L\in (0,\infty)$, and that the limiting surface energy is anisotropic. The reason is that the direction $\nu$ of jump of the function $u$ might be not aligned with the directions of periodicity of the potential $W$. This mismatch is at the origin of the anisotropic nature of the limiting energy.
Note that, contrary to the previous regime, the optimal transition profile is no more one dimensional.

The third regime $\varepsilon\ll\delta$ sees the phase separation process happening at a smaller scale than homogenization. Thus, we heuristically expect to first send $\varepsilon\to0$ while keeping $\delta$ fixed, and then send this latter to zero.
In this case, with the first limit we go from a bulk energy to a surface energy of the form
\begin{equation}\label{eq:osc_interface}
	E\mapsto \int_{\partial^* E} \sigma\left( \frac{x}{\delta} \right)\dd\hno(x),
\end{equation}
for sets $E\subset\R^N$ of finite perimeter, and $\sigma:\Omega\to[0,\infty)$ is defined in \eqref{eq:sigma_mov}.
Note that, with this first limit, we pass from a \emph{bulk} energy to a \emph{surface} energy, that we now need to homogenize. This is done by using the theory of plane-like minimizers developed by Caffarelli and de la Llave in \cite{CaffaDeLa}, and used by Chambolle and Thouroude in \cite{ChaThou} in the context of homogenization.
This gives, for each $\nu\in\mathbb{S}^{N-1}$, the existence of a set of locally finite perimeter $E_\nu\subset\R^N$ such that the homogenization of the energy in \eqref{eq:osc_interface} is given by
\begin{equation}\label{eq:hom_interface}
	E\mapsto \int_{\partial^* E} \sigma^{\mathrm{h}}_{surf}\left( \nu_E(x) \right)\dd\hno(x),
\end{equation}
where
\begin{equation}\label{eq:sigma_regime_3}
\sigma^{\mathrm{h}}_{surf}(\nu)\coloneqq \lim_{T\to\infty} \frac{1}{T^{N-1}}
    \int_{TQ_\nu\cap \partial^* E_\nu} \sigma(y)\dd \hno(y).
\end{equation}
The proof that the functional in \eqref{eq:hom_interface} equals \eqref{eq:limiting_F_regime_2} is provided in \cite{CriFonGan_fixed_Sup}.

Note that the limiting surfaces densities in \eqref{eq:sigma_regime_1}, \eqref{eq:sigma_regime_2}, and \eqref{eq:sigma_regime_3} are all surface densities of a constant quantity, a bulk integral, and a surface one, respectively.

Finally, we remark that, in all of the cases, the limiting energy does not depend on the spatial variable. This is an advantage for both the theoretical and the numerical point of view. Indeed, for the former it gives a geometric model approximating a complex physical phenomenon, other than theoretical tools to investigate regularity properties of minimal interfaces.
For the latter,  numerical simulations for $F^{(1)}_{\infty}$ are extremely expensive when $\delta\ll1$, since the size of the discretization grid has to be smaller than $\delta$. This is in analogy with what happens in the classical theory of homogenization. Therefore, it is more convenient to perform numerical simulations for the functional $F^{(1)}_{\infty}$ in \eqref{eq:limiting_F_regime_2} than for \eqref{eq:allen_cahn_one}.
Of course, one has to compute the limiting surface energy densities \eqref{eq:sigma_regime_1}, \eqref{eq:sigma_regime_2}, and \eqref{eq:sigma_regime_3}, and the last two are not that easy.

Extensions of the functional \eqref{eq:allen_cahn_one} in the context of phase separation in composite materials with one scale of microstructures have been considered by several researchers.
In particular, the case where wells are also dependent on the spatial variable has been investigated by the first author, Fonseca and Ganedi in \cite{CriFonGan_moving_sub} in the regime $\varepsilon\ll\delta$.
Moreover, the case where oscillations are in the singular term have been considered by Ansini, Braides, and Chiad\`{o} Piat in \cite{AnsBraChi2} (see also \cite{AnsBraChi1}), while the effect of an highly oscillating forcing term has been investigated by Dirr, Lucia, and Novaga in \cite{DirLucNov1} and \cite{DirLucNov2}.
Finally, the literature for stochastic setting features recent contributions by Marziani \cite{Mar}, by Bach, Marziani and Zeppieri \cite{BacMarZep}, by Morfe \cite{Mor}, by Morfe and Wagner \cite{MorWag} and by Donnarumma \cite{Don}.\\

The question at the basis of this project, of which this paper is the first step, is the following: what happens when the material has multiple microstructures at different scales? In particular, is it true that the 'principle of multiscale physics' obtained above for one scale and according to which we compute 'one limit at the time', holds true also for multiple scales?

The prototype of a composite material with multiple microstructures is that of a periodic structure with materials inclusions having a periodic (smaller) microstructure as well (see Figure \ref{fig:two_microscales} on the left). Namely, we have in mind potentials $W:Q_1\times Q_2\times\R^M\to[0,\infty)$ of the form
    \[
    W(y_1,y_2,z) \coloneqq \ca_{I_1}(y_1) [ \ca_{I_2}(y_2) W_1(u) +  \ca_{Q_2\setminus I_2}(y_2) W_2(u) ] + \ca_{Q_1\setminus I_1}(y_1) W_3(u),
    \]
where $Q_1$ and $Q_2$ are the periodicity cells of the larger and the smaller microstructures. Here $W_1, W_2, W_3$ are classic double-well potentials with the same wells, like the ones of the classical van der Waals--Cahn--Hilliard functional in \eqref{eq:allen_cahn_iso}.
Note that there is no relation between $Q_1$ and $Q_2$.
Therefore, we are naturally led to consider the functional 
\begin{equation}\label{eq:allen_cahn_two}
    F^{(0)}_{\varepsilon,\delta,\eta} (u) \coloneqq \int_\Omega \Bigg[ \frac{1}{\varepsilon} W \Bigg( \!\bigg\{\! \frac{x}{\delta} \! \bigg\}_{Q_1}\!, \bigg\{\! \frac{\delta}{\eta}\!\bigg\{ \!\frac{x}{\delta} \!\bigg\}_{Q_1} \!\bigg\}_{Q_2}\!, u \!\Bigg) + \varepsilon | \nabla u |^2 \Bigg] \! \dd x,
\end{equation}
where $\{ z \}_{Q_i}$ is the fractional part of $z$ with respect to $Q_i$. This notation will be made clear in Definition \ref{def:fractional_part}.
Having the above example in mind, we always assume a separation of scales
\[
\eta\ll\delta.
\]
 This functional, albeit less common in literature, makes the most sense for us, as it naturally has the consequence of being $Q_1$-periodic in the first argument, and $Q_2$-periodic in the second argument. The results of this paper still hold for the classic potential functional given by
$$ \widehat{F}^{(0)}_{\varepsilon,\delta,\eta} (u) \coloneqq \int_\Omega \left[ \frac{1}{\varepsilon} W \left( \frac{x}{\delta}, \frac{x}{\eta}, u \right) + \varepsilon | \nabla u |^2 \right] \dd x, $$
as shown in Remark \ref{remark:alternative_potential}.\\
A complete study of the $\Gamma$-convergence of $F^{(0)}_{\varepsilon,\delta,\eta}$ requires five distinct regimes:
    \begin{enumerate}\setlength\itemsep{0.2cm}
        \item $\eta \ll \delta \ll \varepsilon$;
        \item $\eta \ll (\varepsilon \approx \delta)$;
        \item $\eta \ll \varepsilon \ll \delta$;
        \item $(\eta \approx \varepsilon) \ll \delta$;
        \item $\varepsilon \ll \eta \ll \delta$.
    \end{enumerate}
Note that the case of more than two microscales reduces to one of the cases above, except for the (family of) case(s) where $\lambda\ll(\varepsilon\approx\eta)\ll\delta$, being $\lambda$ another scale.
Nevertheless, this situation can be treated by combining what happens in the second and the fourth regimes above.

In this paper we focus on the regime
\[
\eta \ll \delta \ll \varepsilon.
\]
For this case, we expect to obtain the first order $\Gamma$-limit by first sending $\eta$ to zero, then $\delta$, and finally $\epsilon$.
We confirm this claim by a careful analysis that allows us to make rigorous the above procedure of taking `one limit at the time'.
The main novelty of the paper is in developments of techniques robust enough to be applied to multiple scales.
In particular, in our main theorem (see Theorem \ref{thm:gamma_convergence}) we show that
\begin{equation}\label{eq:first_order}
F^{(1)}_\infty(u) \coloneqq \sigma^{\mathrm{h}} |Du|(\Omega),
\end{equation}
for functions $u\in BV(\Omega;\{a,b\})$, where
\[
\sigma^\mathrm{h} \coloneqq \inf \left\{ \int_{-1}^1 2 \sqrt{W^\mathrm{h} ( \gamma(t))} \, | \gamma^{'} (t)| \, \dd t : \gamma \in \text{Lip}([-1,1]; \R^M ), \gamma(-1)=a, \gamma(1)=b \right\}, 
\]
and
\[
W^\mathrm{h} (z) \coloneqq \dashint_{Q_1} \dashint_{Q_2} W(y_1,y_2,z) \, \dd y_1 \dd y_2,
\]
We also provide a compactness result (see Theorem \ref{thm:compactness}) that allows us to use standard results of $\Gamma$-converge to prove the convergence of minimizers and minima.
Finally, we consider the case where the functional \eqref{eq:allen_cahn_two} is finite only on configurations satisfying a mass constraint, and we prove that this passes to the limit; namely, that the first order $\Gamma$-limit is finite only for configurations satisfying the mass constraint and, in this case, given by the functional \eqref{eq:first_order} (see Theorem \ref{thm:mass_constraint}.

The other regimes are the focus of a series of forthcoming papers.

    \begin{figure}
        \centering
        \includegraphics[width=\linewidth]{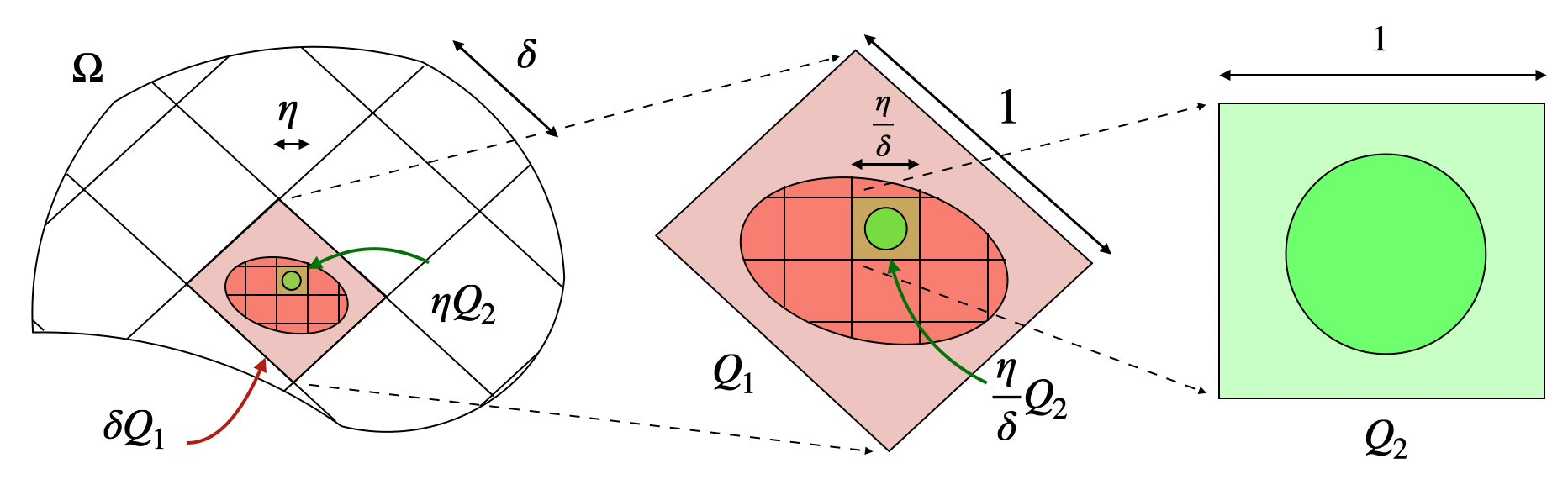}
        \caption{Left: A composite material with a two nested periodic microstructures.
        Center: A microstructure with materials inclusions and a nested microstructure.
        Right: A microstructure with periodic inclusions.
        Different colors correspond to different materials.}
        \label{fig:two_microscales}
    \end{figure}

\section{Assumptions and main results}

Let $\Omega\subset\R^N$ be an open bounded set with Lipschitz boundary.
Let $G_1, G_2$ be two subgroups of $\R^N$ with rank $N$, corresponding to the microscales $\delta$ and $\eta$.
Let $Q_1$ be the periodicity cell with respect to $G_1$, and let $Q_2$ be the periodicity cell with respect to $G_2$.
We assume them to be bounded, with Lipschitz boundary, containing the origin, and such that their measure is equal to $1$.
Points in $Q_1, Q_2$ will be denoted by $y_1, y_2$, respectively. 

Let $W \colon Q_1 \times Q_2 \times \R^M \to \R$ be a function satisfying the following hypotheses:
\begin{enumerate}\setlength\itemsep{0.2cm}
    \item[\namedlabel{itm:1_H1}{(H1)}] There exists a finite Lipschitz partition $\{ E_i \}_{i=1}^K$ of $Q_1$ such that 
    $$ W(y_1, y_2, z) = \sum_{i=1}^K \mathbbm{1}_{E_i}(y_1) W_i (y_1, y_2, z), $$
    where each $W_i$ is a Carath\'eodory function, continuous in the first microscale, that is:
\begin{itemize}\setlength\itemsep{0.2cm}
    \item[$\circ$] $ z \mapsto W_i(y_1, y_2, z)$ is continuous in $\R^M$ for $\ln$-a.e. $y_1 \in E_i$, $y_2 \in Q_2$;
    \item[$\circ$] $y_1 \mapsto W_i(y_1, y_2, z)$ is continuous in $E_i$ for all $z \in \R^M$ and for $\ln$-a.e. $y_2 \in Q_2$;
    \item[$\circ$] $y_2 \mapsto W_i(y_1, y_2, z)$ is measurable in $Q_2$ for all $z \in \R^M$ and for $\ln$-a.e. $y_1 \in E_i$.
\end{itemize}
\item[\namedlabel{itm:1_H2}{(H2)}] There exist $a,b \in \R^M$ such that
$$ W(y_1, y_2, z) = 0 \iff z \in \{ a,b \}. $$
\item[\namedlabel{itm:1_H3}{(H3)}] There exists $W_1 : \R^M \to \R$ continuous such that
\[
0 \leq W_1 (z) \leq W(y_1, y_2, z) \qquad \forall y_1 \in Q_1, \forall y_2 \in Q_2.
\]
\item[\namedlabel{itm:1_H4}{(H4)}] For every $S > 0$, there exists a constant $C_S > 0$ depending only on $S$ such that
$$ \displaystyle \mathrm{ess\,sup}_{y_1 \in Q_1, y_2 \in Q_2, |z|\leq S} W(y_1,y_2,z) \leq C_S. $$
\item[\namedlabel{itm:1_H5}{(H5)}] There exists $R > 0$ such that for $\ln$-a.e. $y_1 \in Q_1$, $y_2 \in Q_2$, if $| z | \geq R$ then it holds:
$$ W(y_1,y_2,z) \geq \frac{1}{R} |z|. $$
\end{enumerate}

\begin{remark}
We note that Assumption \ref{itm:1_H3} is only needed for the compactness.
In \cite[Theorem 1.6]{CriFonHagPop}, the compactness was obtained without the need of a spatially uniform lower bound on $W$.
Despite it is possible to use a similar strategy as employed in \cite[Theorem 1.6]{CriFonHagPop} to get compactness without the need of a lower bound, we prefer to add this assumption in order to focus on the strategy to get the $\Gamma$-limit.
\end{remark}

\begin{remark}
    The Assumption \ref{itm:1_H4} seems quite natural to us, as it implies that the potential energy does not blow up in a finite space.
    It will be used in the proofs to bound some integral terms containing $W$.
    A similar assumption appears in the work \cite{baldo1990minimal} by Baldo (see formula (1.2) in that paper).
\end{remark}

\begin{remark}
    Assumption \ref{itm:1_H5} is needed in order to get compactness. This is done by using the classical strategy developed in \cite{fonseca1989gradient} by Fonseca and Tartar (see also \cite{LeoniCNA}).
    In the case where a mass constrained is in force, it is possible to remove this assumption by using a strategy developed by Leoni in \cite{Leo}.
    We will use this in Theorem \ref{thm:mass_constraint}.
\end{remark}

\begin{remark}
    In the strategy for the liminf and for the limsup, we use the unfolding operator. This will require us to consider the function
    \[
    (y_1,y_2,z)\mapsto W\left(\frac{\eta_n}{\delta_n} \left[ \frac{\delta_n}{\eta_n} y_1 \right]_{Q_2} \! + \frac{\eta_n}{\delta_n} y_2, y_2, z \right).
    \]
    Note that
    \[
    \lim_{n\to\infty} \frac{\eta_n}{\delta_n} \left[ \frac{\delta_n}{\eta_n} y_1 \right]_{Q_2} \! + \frac{\eta_n}{\delta_n} y_2 = y_1, 
    \]
    for all $y_1\in Q_1$ and $y_2\in Q_2$, thanks to the separation of scales assumption.
The continuity of the $W_i$'s in the first variable will allows us to \emph{substitute} the first argument of the above function with $y_1$.
See Remark \ref{remark:convergence} for more details.
    
%
%
\end{remark}

\begin{remark}
    Our analysis is restricted to the case of two wells. For the case of multiple wells, the result still holds after some minor modifications, by incorporating the techniques of \cite{baldo1990minimal}.
\end{remark}

We now introduce the functional that we will study.

\begin{definition}(\textbf{Integer and fractional decomposition}) Let $G_1$ and $Q_1$ as above. Then every $z \in \R^N$ can be decomposed with respect to $(G_1, Q_1)$ as
$$ z = \lfloor z \rfloor_{Q_1} + \{ z \}_{Q_1}, $$
where $\lfloor z \rfloor_{Q_1} \in G_1$ and $\{ z \}_{Q_1} \in Q_1$.\\
This definition can be stated analogously for points $y_1 \in Q_1$, decomposed with respect to $(G_2, Q_2)$.
\end{definition}

\begin{definition}
    Let $(\eta_n)_n$, $(\delta_n)_n$, $(\varepsilon_n)_n$ be infinitesimal sequences.
    For each $n\in\N$, define $F^{(1)}_n \colon L^1 (\Omega; \R^M) \to [0,+\infty] $ as
        \[
        F_n^{(1)} (u) \coloneqq 
            \int_{\Omega} \Bigg[ \frac{1}{\varepsilon_n} W \Bigg( \!\bigg\{\! \frac{x}{\delta_n} \! \bigg\}_{Q_1}\!, \bigg\{\! \frac{\delta_n}{\eta_n}\!\bigg\{ \!\frac{x}{\delta_n} \!\bigg\}_{Q_1} \!\bigg\}_{Q_2}\!, u(x) \!\Bigg) + \varepsilon_n | \nabla u (x) |^2 \Bigg] \! \dd x,
        \]
    if $u \in W^{1,2}(\Omega; \R^M)$, and $+\infty$ otherwise.
\end{definition}

In the following, we will always require $\eta_n \ll \delta_n$, that is the microscales are separated.
Indeed, in the case $( \eta_n \approx \delta_n) \ll\varepsilon_n$, the study reduces to that considered in \cite{CriFonGan_fixed_Sup}.

The main results are the following.

\begin{theorem}(\textbf{Compactness})\label{thm:compactness}
Let $(\eta_n)_n$, $(\delta_n)_n$, $(\varepsilon_n)_n$ be infinitesimal sequences such that $\eta_n \ll \delta_n \ll \varepsilon_n$, that is
\begin{equation*}
    \lim_{n \to \infty} \cfrac{\eta_n}{\delta_n} = 0, \qquad \lim_{n \to \infty} \cfrac{\delta_n}{\varepsilon_n} = 0.
\end{equation*}
Assume that \ref{itm:1_H1},  \ref{itm:1_H2},  \ref{itm:1_H5}, and  \ref{itm:1_H5} hold.
Let $(u_n)_n \subset L^1 (\Omega; \R^M) $ be a sequence of functions such that 
$$ \sup_{n \in \N} F^{(1)}_n (u_n) < \infty. $$
Then, there exists a subsequence $(u_{n_k})_k \subset W^{1,2}(\Omega; \R^M)$ and a function $u \in \mathrm{BV}(\Omega; \{ a,b\})$ such that $u_{n_k} \to u$ strongly in $L^1 (\Omega; \R^M)$.
\end{theorem}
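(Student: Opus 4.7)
My approach is the vectorial Fonseca--Tartar compactness scheme. The role of hypothesis \ref{itm:1_H5} is precisely to kill the oscillations of the potential: for each $n$,
\[
F_n^{(1)}(u_n) \geq \int_{\Omega} \left[ \frac{1}{\varepsilon_n} W_1(u_n) + \varepsilon_n |\nabla u_n|^2 \right] \dd x,
\]
after which the scales $\eta_n, \delta_n$ disappear and their separation $\eta_n \ll \delta_n \ll \varepsilon_n$ plays no further role. In the sequel I take $W_1$ to be continuous with zero set $\{a,b\}$, which is the natural reading of \ref{itm:1_H5} in light of \ref{itm:1_H2}.

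I then introduce the Fonseca--Tartar auxiliary function
\[
\phi(z) \coloneqq \inf\left\{ \int_0^1 2\sqrt{W_1(\gamma(t))}\,|\gamma'(t)| \dd t : \gamma \in \mathrm{Lip}([0,1]; \R^M),\ \gamma(0) = a,\ \gamma(1) = z \right\}.
\]
By \ref{itm:1_H4}, $W_1$ is locally bounded, so $\phi$ is locally Lipschitz with $|\nabla\phi(z)| \leq 2\sqrt{W_1(z)}$ a.e.; by \ref{itm:1_H3}, $\phi$ is coercive. The chain rule and Young's inequality give
\[
|\nabla(\phi \circ u_n)| \leq 2\sqrt{W_1(u_n)}\,|\nabla u_n| \leq \frac{1}{\varepsilon_n} W_1(u_n) + \varepsilon_n |\nabla u_n|^2,
\]
hence $|D(\phi \circ u_n)|(\Omega) \leq F_n^{(1)}(u_n) \leq C$. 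From \ref{itm:1_H3} I also deduce $\|u_n\|_{L^1} \leq R|\Omega| + R \int_\Omega W_1(u_n) \dd x \leq R|\Omega| + CR\varepsilon_n$, which, combined with the growth of $\phi$, bounds $\|\phi \circ u_n\|_{L^1}$. Thus $(\phi \circ u_n)$ is bounded in $BV(\Omega)$.

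By BV compactness, along a subsequence $\phi \circ u_n \to v$ in $L^1(\Omega)$ and a.e., for some $v \in BV(\Omega)$. Since $\int_\Omega W_1(u_n) \dd x \leq C\varepsilon_n \to 0$, a further subsequence satisfies $W_1(u_n) \to 0$ a.e. At such points $(u_n(x))$ is bounded by \ref{itm:1_H3}, and any accumulation point $z^\ast$ satisfies $W_1(z^\ast) = 0$, hence $z^\ast \in \{a,b\}$, with $\phi(z^\ast) = v(x)$. The main subtlety of the scheme is the nondegeneracy $\phi(a) = 0 \neq \phi(b)$, which separates the wells and forces the accumulation point to be uniquely determined by $v(x)$; consequently the whole sequence $u_n(x) \to u(x)$ with $u \coloneqq a\,\ca_{\{v = 0\}} + b\,\ca_{\{v = \phi(b)\}}$. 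Equi-integrability of $(u_n)$ follows again from \ref{itm:1_H3} (for $M \geq R$, the tails $\int_{\{|u_n|\geq M\}} |u_n| \dd x$ are controlled by $R \int_\Omega W_1(u_n) \dd x \to 0$), so Vitali yields $u_n \to u$ strongly in $L^1(\Omega;\R^M)$; and the identity $v = \phi(b)\,\ca_{\{u = b\}} \in BV(\Omega)$ shows that $\{u = b\}$ has finite perimeter, concluding $u \in BV(\Omega;\{a,b\})$.
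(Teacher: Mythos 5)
Your proof follows the same route the paper takes: reduce to the spatially uniform lower bound $W_1$ via \ref{itm:1_H5}, and then run the Fonseca--Tartar compactness scheme, which the paper simply cites as ``the methods of \cite{fonseca1989gradient}'' while you spell it out. The reasoning is correct.

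One observation is worth recording. As stated, hypotheses \ref{itm:1_H3} and \ref{itm:1_H5} only force $W_1\geq 0$ with $W_1(a)=W_1(b)=0$ (the latter from \ref{itm:1_H2} and \ref{itm:1_H5}) and linear growth at infinity; they do not assert that $W_1$ is continuous, nor that its zero set is exactly $\{a,b\}$. Yet both properties are indispensable for the scheme: continuity is needed to pass $W_1(u_n(x))\to 0$ to the limit and conclude $W_1(z^\ast)=0$ at accumulation points, and $W_1^{-1}(0)=\{a,b\}$ is needed for the nondegeneracy $\phi(b)>0$ and for identifying the limit $u$ as taking values in $\{a,b\}$. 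You supply these as ``the natural reading of \ref{itm:1_H5}''; the paper leaves them implicit in the citation. Finally, you invoke \ref{itm:1_H4} to get local boundedness of $W_1$, but \ref{itm:1_H4} is not among the hypotheses of this theorem; once you have assumed $W_1$ continuous that appeal is unnecessary and can be dropped.
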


We now define the limit functional.
\begin{definition}
    Let $u \in L^1 (\Omega; \R^M)$. Define $F_\infty^{(1)} \colon L^1 (\Omega; \R^M) \to [0,+\infty]$ as
    \begin{equation}\label{eq:gamma_limit}
         F^{(1)}_\infty (u) \coloneqq
         \left\{
         \begin{array}{ll}
         \sigma^\mathrm{h} \, \mathrm{Per} \big( \{ u = a \}; \Omega \big) & u \in \text{BV} \big( \Omega; \{ a,b\} \big), \\
            +\infty & \text{otherwise},
        \end{array}
        \right.
    \end{equation}
    where $\sigma^\mathrm{h}$ is given by:
    \begin{equation}\label{eq:sigma_gamma_limit}
        \sigma^\mathrm{h} \coloneqq \inf \left\{ \int_{-1}^1 2 \sqrt{W^\mathrm{h} ( \gamma(t))} \, | \gamma^{'} (t)| \, \dd t : \gamma \in \text{Lip}([-1,1]; \R^M ), \gamma(-1)=a, \gamma(1)=b \right\}, 
    \end{equation}
    and $W^\mathrm{h}(z)$ is defined as
    \begin{equation}\label{eq:homogenized}
        W^\mathrm{h} (z) \coloneqq \dashint_{Q_1} \dashint_{Q_2} W(y_1,y_2,z) \, \dd y_1 \dd y_2,
    \end{equation}
    for all $z\in\R^M$.
\end{definition}

\begin{remark}
    It turns out that the proofs are slightly easier if, in the definition of $\sigma^\mathrm{h}$, we allow $\gamma$ to be in a slightly different family than $\mathrm{Lip}([-1,1]; \R^M)$, namely $\mathrm{Lip}_\mathcal{Z}([-1,1]; \R^M)$, which is the space of continuous curves $\gamma \colon [-1,1] \to \R^M$ such that $\gamma \in \mathrm{Lip}(T; \R^M)$ for every compact set $T \subset [-1,1]$ disjoint from $\{ t \in [-1,1] : \gamma (t) \in \{a,b \} \}$. We refer the reader to Section \ref{sec:geodesic} for more detais.
\end{remark}

\begin{theorem}(\;\textbf{$\Gamma$-convergence})\label{thm:gamma_convergence} Let $(\eta_n)_n$, $(\delta_n)_n$, $(\varepsilon_n)_n$ be infinitesimal sequences such that $\eta_n \ll \delta_n \ll \varepsilon_n$, that is
\begin{equation*}
    \lim_{n \to \infty} \cfrac{\eta_n}{\delta_n} = 0, \qquad \lim_{n \to \infty} \cfrac{\delta_n}{\varepsilon_n} = 0.
\end{equation*}
Assume that  \ref{itm:1_H1}-\ref{itm:1_H4} hold.
Then, as $n \to \infty$,  $F^{(1)}_n$ $\Gamma$-converges to $F_\infty^{(1)}$ with respect to the strong $L^1 (\Omega; \R^M)$ convergence.

\end{theorem}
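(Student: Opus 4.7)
The plan is to establish the $\Gamma$-liminf and $\Gamma$-limsup inequalities separately, using the scale separation $\eta_n \ll \delta_n \ll \varepsilon_n$ to implement the ``one limit at a time'' heuristic: first average over the $\eta$-scale, then over the $\delta$-scale, finally perform the Modica--Mortola analysis at the $\varepsilon$-scale. The rigorous bridge between these three steps is the multiscale unfolding operator.

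For the limsup, given $u \in \text{BV}(\Omega;\{a,b\})$, I would first reduce to the case that $E := \{u=a\}$ has smooth boundary in $\Omega$ via the density of smooth sets in the perimeter topology and a diagonal argument. Choose a near-optimal curve $\gamma \in \mathrm{Lip}_{\mathcal{Z}}([-1,1];\R^M)$ connecting $a$ to $b$ with $\int_{-1}^1 2\sqrt{W^{\mathrm h}(\gamma)}\,|\gamma'|\,\dd t \leq \sigma^{\mathrm h}+\tau$, and set $u_n(x) := \gamma(d_E(x)/\varepsilon_n)$, where $d_E$ is the signed distance to $\partial E$, suitably truncated. Coarea/Fubini along the normal direction to $\partial^* E$ decouples the energy into transverse and tangential integrals. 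Along each transverse line, the arguments $x/\delta_n$ and $x/\eta_n$ oscillate on scales $\delta_n/\varepsilon_n$ and $\eta_n/\varepsilon_n$, both vanishing relative to the profile scale $\varepsilon_n$. A Riemann--Lebesgue/two-scale convergence argument, applied slice by slice with \ref{itm:1_H4} providing the dominating bound, replaces the oscillating potential by $W^{\mathrm h}$ in the limit. Integrating over the tangential directions gives $\limsup_n F_n^{(1)}(u_n) \leq (\sigma^{\mathrm h}+\tau)\,\mathrm{Per}(E;\Omega)$, and letting $\tau \to 0$ concludes.

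For the liminf, consider $u_n \to u$ in $L^1$ with finite $\liminf_n F_n^{(1)}(u_n)$; passing to a subsequence, the energies are uniformly bounded, and an adaptation of the compactness argument (valid under \ref{itm:1_H1}--\ref{itm:1_H4} as noted in the remark following Theorem \ref{thm:compactness}) gives $u \in \mathrm{BV}(\Omega;\{a,b\})$, otherwise the bound is trivial. Applying Young's inequality to the integrand,
\[
F_n^{(1)}(u_n) \geq \int_\Omega 2\sqrt{W\!\left(\tfrac{x}{\delta_n},\tfrac{x}{\eta_n},u_n\right)}\,|\nabla u_n|\,\dd x.
\]
Insert the multiscale unfolding operator $\mathcal{T}_n^{\delta_n,\eta_n}\colon L^1(\Omega) \to L^1(\Omega \times Q_1 \times Q_2)$. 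Its (quasi-)isometry properties rewrite the right-hand side, modulo boundary-layer errors vanishing as $n\to\infty$, as an integral over $\Omega \times Q_1 \times Q_2$ of $2\sqrt{W(y_1,y_2,\cdot)}$ against the unfoldings of $u_n$ and $|\nabla u_n|$. Exploiting $\eta_n/\delta_n \to 0$, integrate out $y_2$ first (innermost scale), then $y_1$, using the strong convergence of the unfolded $u_n$ to $u$ (which is independent of $y_1,y_2$); this yields, as an effective lower bound, the one-scale expression $\int_\Omega 2\sqrt{W^{\mathrm h}(u)}\,|\nabla u_n|\,\dd x$ in the limit. The classical Modica--Mortola chain-rule argument applied to the averaged potential $W^{\mathrm h}$, combined with the $BV$ lower semicontinuity for functions taking only the values $a,b$, identifies the resulting liminf with $\sigma^{\mathrm h}\,\mathrm{Per}(\{u=a\};\Omega)$.

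The main technical obstacle is the liminf step, more precisely the recovery of $\sqrt{W^{\mathrm h}}$ (the square root of the average) rather than the strictly smaller pointwise average of $\sqrt{W}$ that a naive Jensen's inequality would yield. This is exactly where the three-scale separation is used: the gradient $|\nabla u_n|$ concentrates on a tubular neighborhood of thickness $\sim \varepsilon_n$ around the limiting interface, which contains many periods of both microstructures (since $\eta_n,\delta_n \ll \varepsilon_n$), so that the averaging of $W$ over both $y_1$ and $y_2$ takes place \emph{before} the square root is evaluated. Encoding this rigorously through the multiscale unfolding operator, and controlling the boundary-layer terms in the unfolding expansion uniformly in the three scales, constitutes the heart of the proof.
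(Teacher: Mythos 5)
Your limsup outline broadly matches the paper's construction: reduce to a smooth interface, pick a near-optimal curve for $\sigma^{\mathrm h}$, build $u_n$ as a one-dimensional profile in the normal direction, slice by coarea, and use the scale separation $\delta_n/\varepsilon_n,\,\eta_n/\varepsilon_n \to 0$ together with the continuity of $\gamma$ and of $W$ to replace the oscillating potential by $W^{\mathrm h}$. The paper carries this out with a Modica-type reparametrization $g_n$ satisfying an ODE and with an explicit cell-by-cell partition of the transition layer ($P_n$, $Q_n$, $R_n$ in Section~\ref{sec:limsup}), so the slice-wise ``Riemann--Lebesgue'' argument you sketch is correct in spirit.

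The liminf plan, however, has a genuine gap, and it is one you partially diagnose yourself. You apply Young's inequality \emph{first}, obtaining
\[
F^{(1)}_n(u_n)\ \geq\ \int_\Omega 2\sqrt{W\!\left(\tfrac{x}{\delta_n},\tfrac{x}{\eta_n},u_n\right)}\,|\nabla u_n|\,\dd x,
\]
and \emph{then} insert the unfolding operator and average in $y_1,y_2$. But averaging the integrand after the square root has been taken produces $\dashint_{Q_1}\dashint_{Q_2}\sqrt{W(y_1,y_2,\cdot)}$, which by Jensen is $\leq \sqrt{W^{\mathrm h}(\cdot)}$ — a \emph{weaker} lower bound, in exactly the wrong direction. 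You note this obstacle in your closing paragraph, but the mechanism you propose (``the averaging takes place before the square root'') contradicts the plan you wrote, which takes the square root first. Moreover, the unfolding of $|\nabla u_n|$ is not small — its $L^2$ norm grows like $\varepsilon_n^{-1/2}$ — so ``integrating out $y_2$ then $y_1$'' against the unfolded $|\nabla u_n|$ is not a passage to the limit one can justify slice by slice.

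The missing idea is the \emph{auxiliary cell problem} $W^\xi$ of Definition~\ref{def:auxiliary}: before Young, one unfolds the potential term alone, writes $\mathcal{U}_2 u_n = u_n + (\mathcal{U}_1 u_n - u_n) + (\mathcal{U}_2 u_n - \mathcal{U}_1 u_n)$, and shows via Theorem~\ref{thm:energy_estimates} that the two perturbations have $L^2$ norms of order $\sqrt{\delta_n}$ and $\sqrt{\eta_n/\delta_n}$ and controlled gradients, hence are admissible competitors in the minimization defining $W^\xi(u_n(x))$ for $n$ large. This yields $\int_\Omega W(\cdot,\cdot,u_n)\,\dd x \geq \int_\Omega W^\xi(u_n)\,\dd x$ up to vanishing errors, with the average over $(y_1,y_2)$ sitting \emph{inside} the infimum and before any square root. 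Only then is Young applied, giving $\sigma^\xi\,\mathrm{Per}(\{u=a\};\Omega)$, and Proposition~\ref{prop:lim_sigma_xi} ($\sigma^\xi \to \sigma^{\mathrm h}$ as $\xi\to 0$, which itself requires the compact-wells analysis of $W^\xi$ in Theorem~\ref{thm:prop_W_xi}) closes the argument. Without this device — or an equivalent one — your lower bound cannot be upgraded from $\dashint\dashint\sqrt{W}$ to $\sqrt{W^{\mathrm h}}$.
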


\begin{remark}
    The analysis of this paper and the following other four regimes are restricted to the case of two microscales.
    The type of potential that we have in mind is of the form
    \[
    W(y_1,y_2,z) \coloneqq \ca_{I_1}(y_1) [ \ca_{I_2}(y_2) W_1(u) +  \ca_{Q_2\setminus I_2}(y_2) W_2(u) ] + \ca_{Q_1\setminus I_1}(y_1) W_3(u),
    \]
    where $W_1, W_2, W_3$ are double-well potentials.
    In the  case of multiple microscales, our result still applies.
    In particular, it shows that in the case the parameter $\varepsilon$ is larger than every scale of the microstructure, the surface density of the limiting functional is obtained by taking the weighted averages of the potentials.
\end{remark}

\begin{figure}
    \centering
    \includegraphics[trim={2.2cm 3.5cm 2.5cm 2.5cm},clip,width=0.4\linewidth]{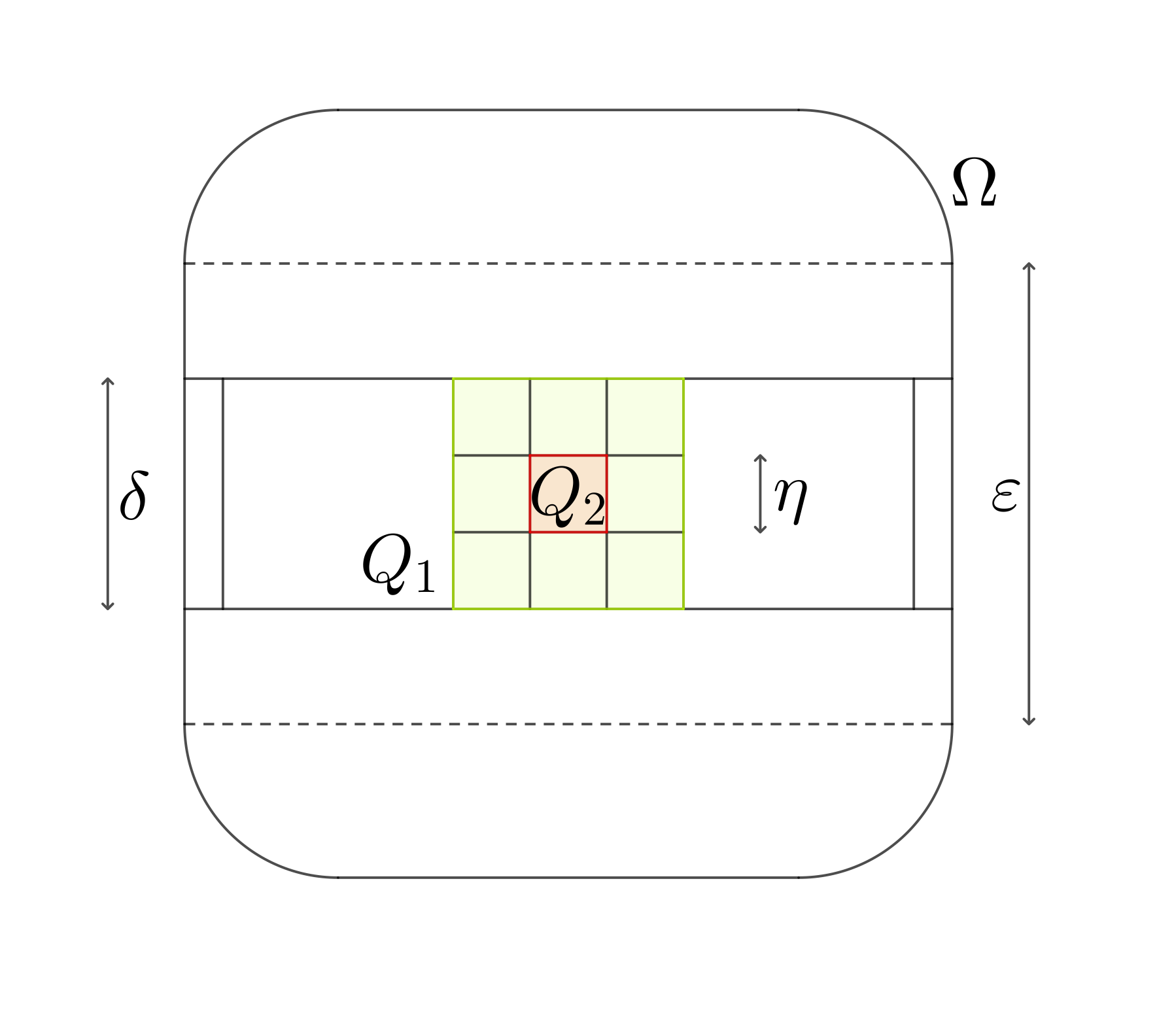}
    \caption{The regime considered in this paper: $\eta \ll \delta \ll \varepsilon$}
    \label{fig:enter-label}
\end{figure}

The proofs for compactness and $\Gamma$-convergence are robust enough to be applied (with minor modifications) to the case of a mass-constrained functional.
\begin{definition}(\textbf{Mass-constrained case})
    Let $m \in (0, 1)$. We define the mass-constrained functionals $\widehat{F}^{(1)}_n : L^1 (\Omega; \R^M) \to [0,+\infty]$ as
    \begin{gather*}
        \widehat{F}^{(1)}_n (u) \coloneqq \begin{cases} \displaystyle
            F^{(1)}_n (u) \qquad u \in W^{1,2}(\Omega;\R^M), \int_\Omega u \dd x = ma + (1-m) b, \\
            + \infty \hspace{1.3cm} \text{otherwise},
        \end{cases}
    \end{gather*}
    and $\widehat{F}^{(1)}_\infty \colon L^1 (\Omega; \R^M) \to [0,+\infty]$ as
    \begin{gather*}
        \widehat{F}^{(1)}_\infty (u) \coloneqq \begin{cases} \displaystyle
            F^{(1)}_\infty (u) \qquad u \in \mathrm{BV}(\Omega;\{ a,b \}), \int_\Omega u \dd x = ma + (1-m) b, \\
            + \infty \hspace{1.3cm} \text{otherwise}.
        \end{cases}
    \end{gather*}
\end{definition}

\begin{remark}
    The case $N=1, M=1$ has a classical proof based on traslations of the optimal profile along the real line. Here, we treat the case $N \geq 1, M>1$, starting from the strategy in \cite[Lemma 3.3]{baldo1990minimal}. 
\end{remark}

\begin{theorem}\label{thm:mass_constraint}
    Let $m \in (0,1)$, and let $(\eta_n)_n$, $(\delta_n)_n$, $(\varepsilon_n)_n$ be infinitesimal sequences such that $\eta_n \ll \delta_n \ll \varepsilon_n$, that is
\begin{equation*}
    \lim_{n \to \infty} \cfrac{\eta_n}{\delta_n} = 0, \qquad \lim_{n \to \infty} \cfrac{\delta_n}{\varepsilon_n} = 0.
\end{equation*}
Then, the following hold:
\begin{enumerate}\setlength\itemsep{0.2cm}
    \item Let $(u_n)_n \subset L^1 (\Omega; \R^M) $ be a sequence of functions such that 
    $$ \sup_{n \in \N} \widehat{F}^{(1)}_n (u_n) < \infty. $$
    Assume that \ref{itm:1_H1},  \ref{itm:1_H2},  \ref{itm:1_H3}, and  \ref{itm:1_H5} hold.
    Then, there exists a subsequence $(u_{n_k})_k \subset W^{1,2}(\Omega; \R^M)$ and a function $u \in \mathrm{BV}(\Omega; \{ a,b\})$ such that
    $$ u_{n_k} \to u \quad \text{strongly in } L^1 (\Omega; \R^M). $$
    \item Assume that  \ref{itm:1_H1}-\ref{itm:1_H4} hold, and that $\partial_z W(y_1,y_2,a)$ exists and it is equal to $0$ for $\ln$-a.e. $y_1 \in Q_1, y_2 \in Q_2$ (analogously for $b$). Then, $(\widehat{F}^{(1)}_n)_n$ $\Gamma$-converges with respect to the strong $L^1 (\Omega; \R^M)$ convergence to $\widehat{F}^{(1)}_\infty$, as $n \to \infty$.
\end{enumerate}
\end{theorem}

\begin{remark}
In case the potential $W$ is assumed to be globally $C^2$ in the last variable, a simpler proof gives the above result for all $N,M\geq 1$ (see \cite{Ishige}).
\end{remark}

\subsection{Outline of the strategy}

In this section, we explain the main novel ideas and the challenges of the proofs of the main result, Theorem \ref{thm:gamma_convergence}.

For the liminf inequality, the main challenge is to first take the limit as $\delta_n\to0$ while keeping $\varepsilon_n$ fixed, and then sending $\varepsilon_n\to0$.
In the case of one scale of microstructure, in \cite{CriFonGan_fixed_Sup} the authors introduces a strategy to make this argument rigorous.
The idea is the following: given a sequence $(u_n)_{n\in\N}\subset L^1(\Omega;\R^M)$ such that $u_n\to u$ for some $u\in BV(\Omega;\{a,b\})$, we focus on the potential term
\[
\int_\Omega W\bigg(\bigg\{\frac{x}{\delta_n}\bigg\}_{Q_1}, u_n(x)\bigg) \dd x.
\]
We would like to replace the integrand
\[
W\bigg(\bigg\{\frac{x}{\delta_n}\bigg\}_{Q_1}, u_n(x)\bigg) \quad\quad\quad \text{ with }\quad\quad\quad
W^{\mathrm{h}}\left(u_n(x)\right).
\]
This, in general, requires the \emph{technical} strong assumption $\delta\ll\e^{3/2}$ to use a Poincar\'{e} inequality to perform the pointwise substitution (see \cite{Hag}, and also \cite{AnsBraChi2}) since the above integral is multiplied by $\varepsilon_n^{-1}$.
Since we are interested in a liminf inequality, we reason as follows. Using the unfolding operator $\mathcal{U}_1$ at scale $\delta_n$, we can write (up to asymptotically negligible terms)
\[
\int_\Omega W\bigg(\bigg\{\frac{x}{\delta_n}\bigg\}_{Q_1}, u_n(x)\bigg) \dd x
 = \int_\Omega \int_{Q_1} W(y, \mathcal{U}_1 u_n(x,y)) \dd y \dd x.
\]
If in the integrand on the right-hand side we had $u_n(x)$ in place of $\mathcal{U}_1 u_n(x,y)$, we would have done, since that term would exactly be the homogenized potential $W^{\mathrm{h}}(u_n(x))$.
Having this in mind, we write
\[
\int_\Omega \int_{Q_1} W(y, \mathcal{U}_1 u_n(x,y)) \dd y \dd x
    = \int_\Omega \int_{Q_1} W(y, u_n(x) + (\mathcal{U}_1 u_n(x,y) -u_n(x))) \dd y \dd x,
\]
and we treat the term $\mathcal{U}_1 u_n(x,y) - u _n(x)$ as a perturbation that we expect to vanishes as $n\to\infty$. In particular, fixed $\xi>0$, we can consider the space of such admissible perturbations $\mathcal{A}_\xi$ that will have to satisfy some technical conditions that we do not specify in here (see \cite[Definition 3.3]{CriFonGan_fixed_Sup}), but with the fundamental property that $\mathcal{A}_\xi$ contains only the zero function when $\xi=0$. Thus, we get
\begin{equation}\label{eq:W_xi_outline}
\int_\Omega \int_{Q_1} W(y, u_n(x) + (\mathcal{U}_1 u_n(x,y) -u_n(x))) \dd y \dd x \geq \int_\Omega W^\xi(u_n(x)) \dd x,
\end{equation}
where
\[
W^\xi(p) \coloneqq \inf\left\{ \int_{Q_1} W(y, p + \psi(y)) \dd y : \psi\in\mathcal{A}_\xi  \right\}.
\]
Therefore, we got rid of the scale $\delta_n$, at the cost of introducing a slighter different potential $W^\xi$ in place of $W^{\mathrm{h}}$.
This will need to be fixed at the end of the proof. Indeed, using \eqref{eq:W_xi_outline}, the standard argument for the liminf of the Modica-Mortola functional yields that
\[
\liminf_{n\to\infty} F^{(1)}_n(u_n) \geq \sigma^\xi\,\, \mathrm{Per}(\{u=a\};\Omega),
\]
where
\[
\sigma^\xi \coloneqq \inf \left\{ \int_{-1}^1 2 \sqrt{W^\xi ( \gamma(t))} \, | \gamma^{'} (t)| \, \dd t : \gamma \in \text{Lip}([-1,1]; \R^M ), \gamma(-1)=a, \gamma(1)=b \right\},
\]
Therefore, in order to conclude, we need to prove that
\[
\lim_{\xi\to0} \sigma^\xi = \sigma^{\mathrm{h}}.
\]
This is done by a study of the geodesic problem defining those two quantities, using the fact that $W^\xi$ converges to $W^\mathrm{h}$ locally uniformly.

For multiple scales, one would expect a similar strategy to work.
One of the main contribution of the present paper is the formalization of the argument and the carrying out the delicate analysis to prove that all the errors introduced by the approximation vanishes in the limit.
Indeed, the technicalities involved in the several steps of the proof are far from being a trivial adaptation of the argument for the one scale case.
In particular, if for a single scale, the periodic unfolding is used, for two scales, a \emph{double periodic unfolding} $\mathcal{U}_2$ is needed.
This mathematical tool has been recently developed by Damlamian, Griso, and Cioranescu in \cite{damlamian2022periodic} (see next section for more details) and allows to write
\begin{align*}
    \int_\Omega W &\Bigg( \!\bigg\{\! \frac{x}{\delta_n} \! \bigg\}_{Q_1}\!, \bigg\{\! \frac{\delta_n}{\eta_n}\!\bigg\{ \!\frac{x}{\delta_n} \!\bigg\}_{Q_1} \!\bigg\}_{Q_2}\!, u(x) \!\Bigg) \dd x \\
    &\qquad\qquad\qquad= \int_{\Omega} \int_{Q_1} \int_{Q_2}\! W \!\bigg( \frac{\eta_n}{\delta_n} \bigg[ \frac{\delta_n}{\eta_n}y_1 \bigg]_{Q_1}\! + \frac{\eta_n}{\delta_n}y_2, y_2, \mathcal{U}_2 u_n(x,y_1,y_2) \! \bigg) \!\dd y_2 \dd y_1 \dd x.
\end{align*}
In order to separate the contribution of two scales $\eta_n$ and $\delta_n$, we write
\begin{equation}\label{eq:perturbation}
\mathcal{U}_2 u_n(x,y_1,y_2) = u_n(x) + [\mathcal{U}_1 u_n(x,y_1) - u_n(x)] + [\mathcal{U}_2 u_n(x,y_1,y_2) - \mathcal{U}_1 u_n(x,y_1)].
\end{equation}
This is a source of technical difficulties. First of all, we needed to identify the right classes of admissible competitors for the infimum problem that defines the approximate functional $W^\xi_n$, which clearly now has to depend on $n$ (see Definition \ref{def:auxiliary}); it turns out that what makes the analysis work is to have admissible classes that also include some pointwise conditions.
Unfortunately, the terms on the right-hand side of \eqref{eq:perturbation} that are not $u_n(x)$ do not actually belong to the required space of admissible competitors. Therefore, we need to estimate that the region where all of the conditions are satisfied is, asymptotically, of full measure.
Finally, checking that the energy density $\sigma^\xi$ for the approximate homogenized potential $W^\xi$ (given by the limit as $n \to \infty$ of $W_n^\xi$) converge to $\sigma^{\mathrm{h}}$, requires a careful analysis, since in the case of multiple scales, the wells of $W^\xi$ are not single wells anymore, but balls centered at the wells $a$ and $b$, due to the definition of the auxiliary potential $W^\xi_n$.

We note that, in reviewing the strategy for the one scale case, we are able to simplify several steps of the proof of \cite[Theorem 4.1]{CriFonGan_fixed_Sup}, and to extend the methods to the case of an arbitrary number of scales.\\

The construction of the limsup inequality follows a standard approximation argument, where we reduce ourselves to defining it essentially only for the case of a single flat interface. Since the quantity that we want to approximate is $\sigma^{\mathrm{h}}$, we take an approximate geodesics for the problem defining it, and we rescaled such a curve in the normal direction of the interface in a tubular neighborhood of size $\varepsilon_n$. The technical difficulties now lies in checking that, for this particular sequence $(u_n)_n$ it holds that
\[
\lim_{n\to\infty} \left| \int_\Omega W \Bigg( \!\bigg\{\! \frac{x}{\delta_n} \! \bigg\}_{Q_1}\!, \bigg\{\! \frac{\delta_n}{\eta_n}\!\bigg\{ \!\frac{x}{\delta_n} \!\bigg\}_{Q_1} \!\bigg\}_{Q_2}\!, u(x) \!\Bigg) - W^{\mathrm{h}}(u_n(x)) \right| =0.
\]
This is essentially based on a continuity argument for $W$ and for the function $u_n$.

\section{Preliminaries}

\subsection{Unfolding operator}

We recall the classical notion of single scale unfolding operator defined in \cite{cioranescu2008periodic} and recall the new notion of double scale unfolding operator, as defined in \cite{damlamian2022periodic}.\\
Let $\Omega \in \R^N$ be a bounded open set. Let $G_1 G_2$ be two subgroups of $\R^N$ with rank $N$, corresponding to the microscales $\delta$ and $\eta$. In order to make these definitions non-trivial, we require $\eta \ll \delta$, that is the microscales are separated. Let $Q_1$ be a periodicity cell with respect to $G_1$, and let $Q_2$ be a periodicity cell with respect to $G_2$. We assume them to be bounded, with Lipschitz boundary, containing the origin, and with $|Q_1| =1$, $|Q_2|=1$. Points in $Q_1, Q_2$ will be denoted respectively by $y_1, y_2$.

\begin{definition}(\textbf{Integer and fractional decomposition})\label{def:fractional_part} Let $G_1$ and $Q_1$ as above. Then every $z \in \R^N$ can be decomposed as
$$ z = \lfloor z \rfloor_{Q_1} + \{ z \}_{Q_1}, $$
where $\lfloor z \rfloor_{Q_1} \in G_1$ and $\{ z \}_{Q_1} \in Q_1$. In case we consider $z \in \Omega \subset \R^N$, then this decomposition only holds up to the boundary. What this means precisely is made clear in the next definition (see Fig. \ref{fig:unfolding_operator}).
    
\end{definition}

\begin{definition}(\textbf{First unfolding operator})\label{def:first_unfolding}
    Let us define:
    \begin{itemize}\setlength\itemsep{0.2cm}
        \item[$\circ$] $ \displaystyle \Xi_1 \coloneqq \left\{ \xi \in G_1 : \delta (\xi + Q_1) \subset \Omega \right\} $,
        \item[$\circ$] $ \displaystyle \widehat{\Omega}_{\delta} \coloneqq \bigcup_{\xi \in \Xi_1} \delta \left(\xi + \overline{Q}_1 \right)$,
        \item[$\circ$] $ \displaystyle \Lambda_{\delta} \coloneqq \Omega \setminus \widehat{\Omega}_{\delta}$.
    \end{itemize}
    The \emph{first unfolding operator} $\mathcal{U}_1 \colon L^2 (\Omega; \R^M) \to L^2 (\Omega; L^2 (Q_1; \R^M))$ is defined as:
    \begin{gather*}
        \mathcal{U}_1 \phi (x,y_1) \coloneqq \begin{cases}
            \phi \left( \delta \left\lfloor \frac{x}{\delta} \right\rfloor_{Q_1} + \delta y_1 \right) \quad &\text{a.e. in } \widehat{\Omega}_{\delta} \times Q_1, \\
            a \qquad &\text{a.e. in } \Lambda_\delta \times Q_1.
        \end{cases}
    \end{gather*}
\end{definition} 

\begin{figure}
    \centering
    \includegraphics[width=\linewidth]{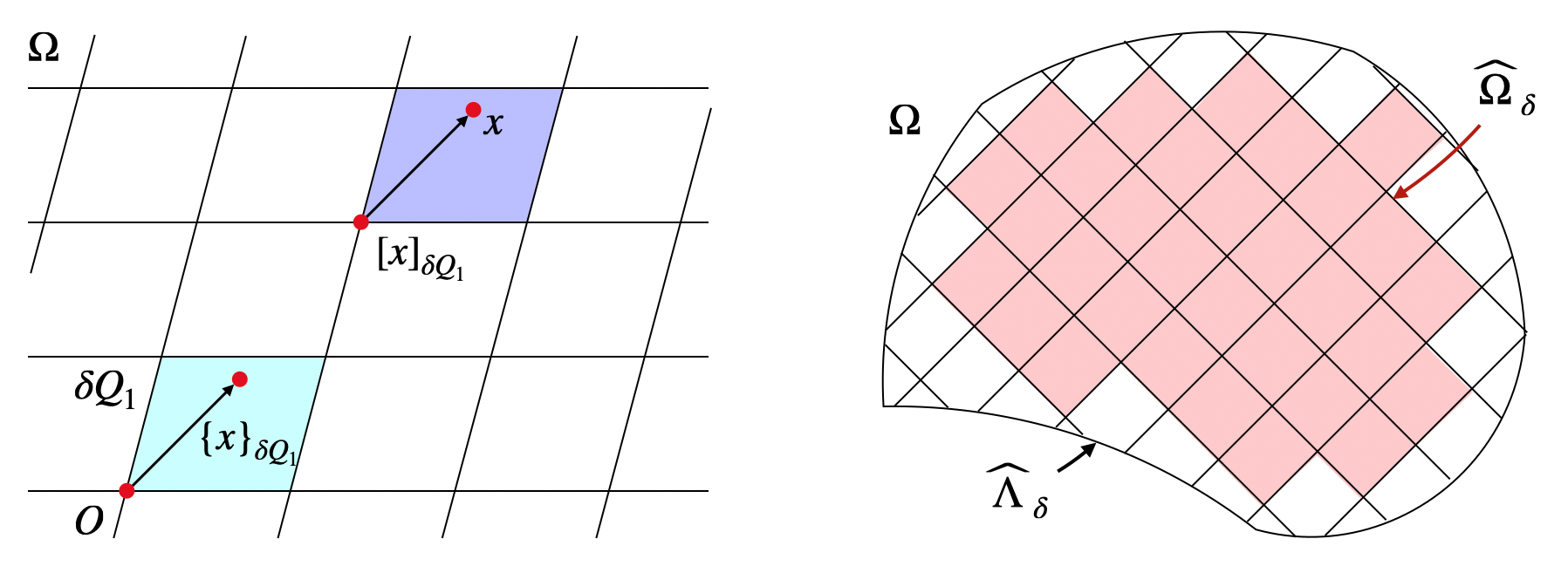}
    \caption{Left: Integer and fractional decomposition in $\R^N$.
    Right: Integer and fractional decomposition in $\Omega$.}
    \label{fig:unfolding_operator}
\end{figure}

\begin{definition}(\textbf{Second partial unfolding operator})\label{def:second_partial_unfolding}
    Let us define:
    \begin{itemize}\setlength\itemsep{0.2cm}
        \item[$\circ$] $ \displaystyle \Xi_2 \coloneqq \left\{ \xi \in G_2 : \frac{\eta}{\delta} (\xi + Q_2) \subset Q_1 \right\} $,
        \item[$\circ$] $ \displaystyle \widehat{Q}_{1, \eta} \coloneqq \bigcup_{\xi \in \Xi_2} \frac{\eta}{\delta} \left( \xi + \overline{Q}_2 \right)$,
        \item[$\circ$] $ \displaystyle \Lambda_{1,\eta} \coloneqq Q_1 \setminus \widehat{Q}_{1,\eta}$.
    \end{itemize}
    The \emph{second partial unfolding operator} $\mathcal{U}_{2, \eta} \colon L^2 (Q_1; \R^M) \to L^2(Q_1;L^2( Q_2; \R^M))$ is defined as:
    \begin{gather*}
        \mathcal{U}_{2, \eta} \phi (y_1,y_2) \coloneqq \begin{cases}
            \phi \left( \frac{\eta}{\delta} \left\lfloor \frac{\delta y_1}{\eta} \right\rfloor_{Q_2} + \frac{\eta}{\delta} y_2 \right) \quad &\text{a.e. in } \widehat{Q}_{1, \eta} \times Q_2 \\
            a\qquad &\text{a.e. in } \Lambda_{1, \eta} \times Q_2.
        \end{cases}
    \end{gather*}
\end{definition}

\begin{definition}(\textbf{Second unfolding operator})\label{def:second_unfolding}
    Let us define the second unfolding operator $\mathcal{U}_2 \colon L^2 (\Omega; \R^M) \to L^2(\Omega; L^2 (Q_1; L^2( Q_2; \R^M)))$, as $\mathcal{U}_2 \coloneqq \mathcal{U}_{2,\eta} \circ \mathcal{U}_1$, or in formulas:
    \begin{gather*}
        \mathcal{U}_2 \phi (x,y_1,y_2) \coloneqq \begin{cases}
            \phi \left( \delta \left\lfloor \frac{x}{\delta} \right\rfloor_{Q_1} + \eta \left\lfloor \frac{\delta y_1}{\eta} \right\rfloor_{Q_2} + \eta y_2 \right) \; &\text{a.e. in } \widehat{\Omega}_{\delta} \times \widehat{Q}_{1, \eta} \times Q_2, \\
            a \qquad &\text{a.e. in }(\Omega \times Q_1) \setminus (\widehat{\Omega}_{\delta} \times \widehat{Q}_{1, \eta}) \times Q_2.
        \end{cases}
    \end{gather*}
\end{definition}

These definitions are slightly different from the classical ones, as in this case the operators are non-zero on the boundary sets. This allows us to simplify some of the computations.
\begin{lemma}\label{lemma:needed}
    Let $W \colon \R^N \times \R^N \times \R^M \to [0,+\infty)$ and $u \in L^2(\Omega; \R^M)$ be as in Theorem \ref{thm:gamma_convergence}.
    Then we have
    \begin{equation}
        \int_{\Omega} W \Bigg( \!\bigg\{\! \frac{x}{\delta} \! \bigg\}_{Q_1}\!, \bigg\{\! \frac{\delta}{\eta}\!\bigg\{ \!\frac{x}{\delta} \!\bigg\}_{Q_1} \!\bigg\}_{Q_2}\!, u \!\Bigg) \dd x \geq \int_{\Omega} \int_{Q_1} \int_{Q_2}\! W \!\bigg( \frac{\eta}{\delta} \bigg[ \frac{\delta}{\eta}y_1 \bigg]_{Q_1} + \frac{\eta}{\delta}y_2, y_2, \mathcal{U}_2 u \! \bigg) \dd y_2 \dd y_1 \dd x.
    \end{equation}
\end{lemma} 
\begin{proof}
    Using the definition of the first unfolding operator $\mathcal{U}_1$ we have
    \begin{align*}
        \int_{\Omega} W \Bigg( \!\bigg\{\! \frac{x}{\delta} \! \bigg\}_{Q_1}\!, \bigg\{\! \frac{\delta}{\eta}\!\bigg\{ \!\frac{x}{\delta} \!\bigg\}_{Q_1} \!\bigg\}_{Q_2}\!, u \!\Bigg) \dd x &\geq \int_{\widehat{\Omega}_\delta} W \Bigg( \!\bigg\{\! \frac{x}{\delta} \! \bigg\}_{Q_1}\!, \bigg\{\! \frac{\delta}{\eta}\!\bigg\{ \!\frac{x}{\delta} \!\bigg\}_{Q_1} \!\bigg\}_{Q_2}\!, u \!\Bigg) \dd x \\
        &= \sum_{\xi_1 \in \Xi_1} \int_{\delta \xi_1 + \delta Q_1} W \Bigg( \!\bigg\{\! \frac{x}{\delta} \! \bigg\}_{Q_1}\!, \bigg\{\! \frac{\delta}{\eta}\!\bigg\{ \!\frac{x}{\delta} \!\bigg\}_{Q_1} \!\bigg\}_{Q_2}\!, u \!\Bigg) \dd x \\
        &= \sum_{\xi_1 \in \Xi_1} \delta^N \int_{Q_1} W \left( y_1, \bigg\{ \frac{\delta}{\eta} y_1 \bigg\}_{Q_2}, u \left( \delta \xi_1 + \delta y_1 \right) \right) \dd y_1 \\
        &= \sum_{\xi_1 \in \Xi_1} \int_{\delta \xi_1 + \delta Q_1} \int_{Q_1} W \left( y_1, \bigg\{ \frac{\delta}{\eta} y_1 \bigg\}_{Q_2}, u \left( \delta \xi_1 + \delta y_1 \right) \right) \dd y_1 \dd x,
    \end{align*}
    where in the second to last equality we used the change of coordinates $x = \delta \xi_1 + \delta y_1$. We now repeat this estimate using the second partial unfolding operator:
    \begin{align*}
        &\sum_{\xi_1 \in \Xi_1} \int_{\delta \xi_1 + \delta Q_1} \int_{Q_1} W \left( y_1, \bigg\{ \frac{\delta}{\eta} y_1 \bigg\}_{Q_2}, u \left( \delta \xi_1 + \delta y_1 \right) \right) \dd y_1 \dd x \\
        &\geq \sum_{\xi_1 \in \Xi_1} \int_{\delta \xi_1 + \delta Q_1} \int_{\widehat{Q}_{1,\eta}} W \left( y_1, \bigg\{ \frac{\delta}{\eta} y_1 \bigg\}_{Q_2}, u \left( \delta \xi_1 + \delta y_1 \right) \right) \dd y_1 \dd x \\
        &= \sum_{\xi_1 \in \Xi_1} \int_{\delta \xi_1 + \delta Q_1} \sum_{\xi_2 \in \Xi_2} \int_{\frac{\eta}{\delta} \xi_2 + \frac{\eta}{\delta} Q_2} W \left( y_1, \bigg\{ \frac{\delta}{\eta} y_1 \bigg\}_{Q_2}, u \left( \delta \xi_1 + \delta y_1 \right) \right) \dd y_1 \dd x \\
        &= \sum_{\xi_1 \in \Xi_1} \int_{\delta \xi_1 + \delta Q_1} \sum_{\xi_2 \in \Xi_2} \left( \frac{\eta}{\delta} \right)^N \int_{Q_2} W \left( \frac{\eta}{\delta} \xi_2 + \frac{\eta}{\delta} y_2, y_2, u \left( \delta \xi_1 + \eta \xi_2 + \eta y_2 \right) \right) \dd y_2 \dd x \\
        &= \sum_{\xi_1 \in \Xi_1} \sum_{\xi_2 \in \Xi_2} \int_{\delta \xi_1 + \delta Q_1} \int_{\frac{\eta}{\delta} \xi_2 + \frac{\eta}{\delta} Q_2} \int_{Q_2} W \left( \frac{\eta}{\delta} \xi_2 + \frac{\eta}{\delta} y_2, y_2, u \left( \delta \xi_1 + \eta \xi_2 + \eta y_2 \right) \right) \dd y_2 \dd y_1 \dd x,
    \end{align*}
    where in the second to last equality we used the change of coordinates $y_1 = \frac{\eta}{\delta} \xi_2 + \frac{\eta}{\delta} y_2$. We substitute now the explicit formulas for $\xi_1$ and $\xi_2$ to obtain
    \begin{align*}
        &\sum_{\xi_1 \in \Xi_1} \sum_{\xi_2 \in \Xi_2} \int_{\delta \xi_1 + \delta Q_1} \int_{\frac{\eta}{\delta} \xi_2 + \frac{\eta}{\delta} Q_2} \int_{Q_2} W \left( \frac{\eta}{\delta} \xi_2 + \frac{\eta}{\delta} y_2, y_2, u \left( \delta \xi_1 + \eta \xi_2 + \eta y_2 \right) \right) \dd y_2 \dd y_1 \dd x \\
        &= \int_{\widehat{\Omega}_\delta} \int_{\widehat{Q}_{1,\eta}} \int_{Q_2} W \left( \frac{\eta}{\delta} \left[ \frac{\delta}{\eta} y_1 \right]_{Q_2} \! + \frac{\eta}{\delta} y_2, y_2, u \left( \delta \left[ \frac{x}{\delta} \right]_{Q_1} + \eta \left[ \frac{\delta y_1}{\eta} \right]_{Q_2} + \eta y_2 \right) \right) \dd y_2 \dd y_1 \dd x,
    \end{align*}
    which gives the exact definition of the second unfolding unfolding operator on $\widehat{\Omega}_{\delta} \times \widehat{Q}_{1, \eta} \times Q_2$.\\
    Now, since we have defined the unfolding operator to be equal to $a$ on the boundary sets, we have that
    $$ W \left( \frac{\eta}{\delta} \left[ \frac{\delta}{\eta} y_1 \right]_{Q_2} \! + \frac{\eta}{\delta} y_2, y_2, \mathcal{U}_2 u \right) = W \left( \frac{\eta}{\delta} \left[ \frac{\delta}{\eta} y_1 \right]_{Q_2} \! + \frac{\eta}{\delta} y_2, y_2, a \right) = 0, $$
    for $(z,y_1,y_2) \in (\Omega \times Q_1) \setminus (\widehat{\Omega}_{\delta} \times \widehat{Q}_{1, \eta}) \times Q_2$.
    This lets us add back the boundary set to the integral, and summing up everything, we have
    \begin{equation*}
        \int_{\Omega} W \Bigg( \!\bigg\{\! \frac{x}{\delta} \! \bigg\}_{Q_1}\!, \bigg\{\! \frac{\delta}{\eta}\!\bigg\{ \!\frac{x}{\delta} \!\bigg\}_{Q_1} \!\bigg\}_{Q_2}\!, u \!\Bigg) \dd x \geq \int_{\Omega} \int_{Q_1} \int_{Q_2} W \left( \frac{\eta}{\delta} \left[ \frac{\delta}{\eta}y_1 \right]_{Q_1} + \frac{\eta}{\delta}y_2, y_2, \mathcal{U}_2 u \! \right) \dd y_2 \dd y_1 \dd x.
    \end{equation*}
\end{proof}

\begin{remark}\label{remark:convergence}
    It is important to note that, since $\eta_n \ll \delta_n$, we get
    $$ \frac{\eta_n}{\delta_n} \left[ \frac{\delta_n}{\eta_n}y_1 \right]_{Q_1} \! + \frac{\eta_n}{\delta_n}y_2 \to y_1, $$
    as $n \to \infty$, for $\ln$-a.e. $y_1 \in Q_1$, $y_2 \in Q_2$. This is where Assumption \ref{itm:1_H1} is crucial. Let $\omega_W$ be the maximum of the modulus of continuity of the functions $W_i$'s.
For each $n \in \N$, define the family of sets
    $$ B_n^i \coloneqq \left\{ y_1 \in E_i : \exists y_2 \in Q_2,\ \exists j \neq i \; \text{ such that } \; \frac{\eta_n}{\delta_n} \left[ \frac{\delta_n}{\eta_n} y_1 \right]_{Q_2} \! + \frac{\eta_n}{\delta_n} y_2 \in E_j \right\}. $$
    Then, for each $i \in \{ 1, \ldots, K \}$, we have $| B_n^i | \to 0$ as $n \to \infty$, as a consequence of the convergence above, and of the regularity of the boundaries of the $E_i$'s.

    Therefore, for $y_1\in E_i\setminus B_n^i$, we have that
    $$ W_i \left( \frac{\eta_n}{\delta_n} \left[ \frac{\delta_n}{\eta_n} y_1 \right]_{Q_1} + \frac{\eta_n}{\delta_n} y_2, y_2, z \right) - W_i \left( y_1, y_2, z \right) \leq \omega_W \left( \left| \frac{\eta_n}{\delta_n} \left[ \frac{\delta_n}{\eta_n} y_1 \right]_{Q_1} + \frac{\eta_n}{\delta_n} y_2 - y_1 \right|, y_2, z \right) \to 0, $$
    as $n \to \infty$, for $\ln$-a.e. $y_1 \in Q_1$, $y_2 \in Q_2$ and $\forall z \in \R^M$.
    Moreover,
    \[
    \int_{Q_2}\int_{B_n^i} \left[ W_i \left( \frac{\eta_n}{\delta_n} \left[ \frac{\delta_n}{\eta_n} y_1 \right]_{Q_1} + \frac{\eta_n}{\delta_n} y_2, y_2, z \right) - W_i \left( y_1, y_2, z \right) \right] \dd y_1 \dd y_2 \leq C | B_n^i | \to 0,
    \]
    as $n\to\infty$, where the existence of the constant $C>0$ follows from \ref{itm:1_H4} and the boundedness of the arguments of the potential $W_i$.
    Thus, for the sake of notation, in the proof of Theorem \ref{thm:prop_W_xi}, and in Step 3 of the proof of Proposition \ref{prop:limsup}, we will omit the error term relative to the sets $B_n^i$.
\end{remark}

\begin{remark}\label{remark:alternative_potential}
    In some formulations of the unfolding operators, there is the need for a sequence of vectors $\iota_{2,\eta} \in \overline{Q}_2$ called adjustments. This depends on the shape of the potential, and for our choice it was not needed. If we assumed a simpler potential like $W \left( \frac{x}{\delta}, \frac{x}{\eta}, u \right)$, $G_1$-periodic in $y_1$ and $G_2$-periodic in $y_2$, then the adjustments would be needed to account for the mismatch between the two nested microscales, in which case we would need
    \begin{equation*}
        \iota_{2,\eta} \coloneqq \left\{ \frac{\delta}{\eta} \left\lfloor \frac{x}{\delta} \right\rfloor_{Q_1} \right\}_{Q_2} \in \overline{Q_2},
    \end{equation*}
    and the second unfolding operator would look like this
    \begin{gather*}
        \mathcal{U}_2 \phi (x,y_1,y_2) \coloneqq \begin{cases}
            \phi \left( \delta \left\lfloor \frac{x}{\delta} \right\rfloor_{Q_1} + \eta \left\lfloor \frac{\delta y_1}{\eta} \right\rfloor_{Q_2} - \eta \, \iota_{2,\eta} + \eta y_2 \right) \; &\text{a.e. in } \widehat{\Omega}_{\delta} \times \widehat{Q}_{1, \eta} \times Q_2, \\
            a \qquad &\text{a.e. in }(\Omega \times Q_1) \setminus (\widehat{\Omega}_{\delta} \times \widehat{Q}_{1, \eta}) \times Q_2.
        \end{cases}
    \end{gather*}
    The results of this paper still hold for this different potential, by adopting this definition of unfolding operator, and slightly modifying the proofs to account for the adjustments.
    For further details, we refer to \cite{damlamian2022periodic}.
\end{remark}

The following propositions will be important later; for their proofs we refer the reader to \cite{cioranescu2008periodic,damlamian2022periodic}.
\begin{proposition}
    Let $u \in L^2(\Omega; \R^M)$. Then
    \begin{enumerate}
        \item the first unfolding operator $\mathcal{U}_1$ is linear, continuous and bounded from $L^2 (\Omega; \R^M)$ to $L^2 (\Omega; L^2 (Q_1 ; \R^M))$;
        \item the second unfolding operator $\mathcal{U}_2$ is linear, continuous and bounded from $L^2 (\Omega; \R^M)$ to $L^2 (\Omega; L^2 (Q_1 ; L^2 (Q_2; \R^M)))$.
    \end{enumerate}
    Moreover, if $u \in W^{1,2}(\Omega; \R^M)$, the chain rule holds for both unfolding operators, therefore
    \begin{align}\setlength\itemsep{0.4cm}
        &\| \nabla_{y_1} \mathcal{U}_1 u \|_{L^2(\Omega; L^2(Q_1; \R^{N\times M}))} = \delta \| \mathcal{U}_1 \nabla u \|_{L^2(\Omega; L^2(Q_1; \R^{N \times M}))} \leq \delta \| \nabla u \|_{L^2(\Omega; \R^{N \times M})}, \label{eq:chain_1}\\
        &\| \nabla_{y_2} \mathcal{U}_2 u \|_{L^2(\Omega; L^2(Q_1 ; L^2 (Q_2; \R^{N\times M})))} = \eta \| \mathcal{U}_2 \nabla u \|_{L^2(\Omega; L^2(Q_1 ; L^2 (Q_2; \R^{N\times M})))} \leq \eta \| \nabla u \|_{L^2(\Omega; \R^{N \times M})}. \label{eq:chain_2}
    \end{align}
\end{proposition}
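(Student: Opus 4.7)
The plan is to reduce every claim to a change of variables on the ``good'' sets $\widehat{\Omega}_\delta$ (respectively $\widehat{\Omega}_\delta \times \widehat{Q}_{1,\eta}$), treating the boundary remainders $\Lambda_\delta$ and $\Lambda_{1,\eta}$ as a controlled perturbation on which the unfolding is the constant $a$. Linearity then follows immediately from the definition on the good set (and in the classical convention, where the unfolding vanishes on the boundary strip, it is exact rather than up to the $a$-offset).

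For the boundedness of $\mathcal{U}_1$, I would compute directly, using that $\mathcal{U}_1 u(x, y_1)$ is independent of $x$ on each cell $\delta(\xi + Q_1)$, that
\[
\int_{\widehat{\Omega}_\delta} \int_{Q_1} |\mathcal{U}_1 u(x, y_1)|^2 \dd y_1 \dd x = \sum_{\xi \in \Xi_1} \delta^N \int_{Q_1} |u(\delta \xi + \delta y_1)|^2 \dd y_1 = |Q_1| \int_{\widehat{\Omega}_\delta} |u(z)|^2 \dd z,
\]
after the substitution $z = \delta \xi + \delta y_1$ of Jacobian $\delta^N$. Adding the boundary contribution $|Q_1|\,|a|^2\,|\Lambda_\delta|$ and taking square roots produces the bound $L^2(\Omega;\R^M) \to L^2(\Omega; L^2(Q_1;\R^M))$. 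The same template, now with cells of size $\eta/\delta$ inside $Q_1$ and accounting for the shift $\iota_{2,\eta}$ (which is a pure translation inside $Q_2$ and so preserves measures), gives boundedness of the second partial unfolding $\mathcal{U}_{2,\eta}$, and hence of $\mathcal{U}_2 = \mathcal{U}_{2,\eta} \circ \mathcal{U}_1$ by composition.

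For the chain rule, I would differentiate the explicit formulas. On $\widehat{\Omega}_\delta \times Q_1$ one has
\[
\nabla_{y_1} \mathcal{U}_1 u(x, y_1) = \delta\, (\nabla u)\!\left(\delta \lfloor x/\delta\rfloor_{Q_1} + \delta y_1\right) = \delta\, \mathcal{U}_1(\nabla u)(x, y_1),
\]
and both sides vanish on $\Lambda_\delta \times Q_1$. Squaring, integrating, and applying the isometry of the previous paragraph to $\nabla u$ in place of $u$ yields both the equality $\|\nabla_{y_1} \mathcal{U}_1 u\|_{L^2} = \delta \|\mathcal{U}_1 \nabla u\|_{L^2}$ and the upper bound $\leq \delta \|\nabla u\|_{L^2(\Omega)}$. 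The computation for $\mathcal{U}_2$ is identical once one observes that the $y_2$-dependence in $\mathcal{U}_2 u$ enters only through the argument $\eta y_2$, producing the factor $\eta$ in place of $\delta$.

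The only genuinely delicate step is checking that the change of variables underlying the boundedness of $\mathcal{U}_{2,\eta}$ is correctly aligned with the shift $\iota_{2,\eta} = \{(\delta/\eta)\lfloor x/\delta\rfloor_{Q_1}\}_{Q_2}$. This is precisely the alignment already verified in the proof of Lemma \ref{lemma:needed} (where it was shown that with this choice of $\iota_{2,\eta}$ the inner tiling of $Q_1$ fits exactly onto $Q_2$-periodic copies of $W$), so I would simply import that alignment; everything else is routine integration by substitution and the chain rule for a composition with an affine function of $y_1$ or $y_2$.
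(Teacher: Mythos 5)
The paper does not prove this proposition itself; it cites \cite{cioranescu2008periodic} and \cite{damlamian2022periodic} for the proofs, so there is no internal argument to compare against. Your reconstruction — isometry up to the boundary strip via the cell-wise change of variables $z=\delta\xi+\delta y_1$ (Jacobian $\delta^N$, using the normalization $|Q_1|=1$ the paper adopts), composition $\mathcal{U}_2=\mathcal{U}_{2,\eta}\circ\mathcal{U}_1$ for the second operator, and differentiation of the affine inner argument to obtain the factors $\delta$ and $\eta$ — is exactly the standard argument those references give, and it is correct.

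Your parenthetical caveat about linearity is a genuine observation worth making explicit: with the paper's modified convention ($\mathcal{U}_1\phi\equiv a$ on $\Lambda_\delta\times Q_1$ rather than $\equiv 0$), the operator is affine, not linear, whenever $a\neq 0$, and the $L^2$ bound is correspondingly affine rather than homogeneous. This does not affect any later use in the paper, because the estimates that actually get used, \eqref{eq:chain_1}--\eqref{eq:chain_2}, concern $\mathcal{U}_1\nabla u$ and $\mathcal{U}_2\nabla u$, where the codomain is $\R^{N\times M}$ and the only sensible boundary value is $0$, restoring exact linearity and the isometry. One small point to state explicitly when importing the $\mathcal{U}_{2,\eta}$ step: although $\iota_{2,\eta}$ depends on $x$ through $\lfloor x/\delta\rfloor_{Q_1}$, it is constant on each cell $\delta(\xi_1+Q_1)$, so on each such cell it really is a pure translation of the $Q_2$-tiling of $Q_1$ and the measure-preserving change of variables goes through cell by cell; you gesture at this via Lemma~\ref{lemma:needed}, which is the right reference.
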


\begin{proposition}
    Let us define $\mathcal{G}_1 \colon \Omega \times Q_1 \to \Omega$ and $\mathcal{G}_2 \colon \Omega \times Q_1 \times Q_2 \to \Omega$ as
    \begin{align*}
        \mathcal{G}_1 (x,y_1) &\coloneqq \delta \left\lfloor \frac{x}{\delta} \right\rfloor_{Q_1} + \delta y_1, \\
        \mathcal{G}_2 (x,y_1,y_2) &\coloneqq \delta \left\lfloor \frac{x}{\delta} \right\rfloor_{Q_1} + \eta \left\lfloor \frac{\delta y_1}{\eta} \right\rfloor_{Q_2} + \eta y_2,
    \end{align*}
    where $\iota_{2,\eta}$ is chosen as above.
    This then implies:
    \begin{align*}
        \mathcal{U}_1 u = u \circ \mathcal{G}_1 \quad &\text{a.e. in } \widehat{\Omega}_{\delta} \times Q_1; \\
        \mathcal{U}_2 u = u \circ \mathcal{G}_2 \quad &\text{a.e. in } \widehat{\Omega}_{\delta} \times \widehat{Q}_{1, \eta} \times Q_2.
    \end{align*}
    Moreover, the following hold for all $(x,y_1,y_2) \in \widehat{\Omega}_{\delta} \times \widehat{Q}_{1, \eta} \times Q_2$:
    \begin{itemize}\setlength\itemsep{0.2cm}
        \item[(i)] $ | \mathcal{G}_2 (x,y_1,y_2) - \mathcal{G}_1 (x,y_1) | \leq c \eta$;
        \item[(ii)] $ | \mathcal{G}_2 (x,y_1,y_2) - x | \leq c \delta$.
    \end{itemize}
\end{proposition}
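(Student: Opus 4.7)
The plan is to verify all three claims by direct computation from the definitions of $\mathcal{U}_1$, $\mathcal{U}_2$, $\mathcal{G}_1$, and $\mathcal{G}_2$, exploiting only the fact that $Q_1, Q_2$ are bounded and that $\eta \le \delta$ for $n$ large.

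First I would dispose of the two identities $\mathcal{U}_1 u = u\circ \mathcal{G}_1$ on $\widehat{\Omega}_\delta\times Q_1$ and $\mathcal{U}_2 u = u\circ \mathcal{G}_2$ on $\widehat{\Omega}_\delta\times\widehat{Q}_{1,\eta}\times Q_2$. These are immediate by matching Definition \ref{def:first_unfolding} and Definition \ref{def:second_unfolding} against the explicit formulas for $\mathcal{G}_1,\mathcal{G}_2$: on the "proper" sets the unfolding operators are exactly $u$ evaluated at the arguments appearing in $\mathcal{G}_1, \mathcal{G}_2$, so no further work is needed.

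For (i), I would compute $\mathcal{G}_2 - \mathcal{G}_1$ explicitly. Using the decomposition $\delta y_1 = \eta\lfloor \delta y_1/\eta\rfloor_{Q_2} + \eta\{\delta y_1/\eta\}_{Q_2}$, the $\delta\lfloor x/\delta\rfloor_{Q_1}$ term cancels and one obtains
\[
\mathcal{G}_2(x,y_1,y_2) - \mathcal{G}_1(x,y_1) = \eta\bigl(y_2 - \{\delta y_1/\eta\}_{Q_2} - \iota_{2,\eta}\bigr).
\]
Since $y_2$, $\{\delta y_1/\eta\}_{Q_2}$ and $\iota_{2,\eta}$ all lie in the bounded set $\overline{Q_2}$, one concludes $|\mathcal{G}_2 - \mathcal{G}_1|\le c\eta$ with $c$ depending only on $\mathrm{diam}(Q_2)$.

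For (ii), I would split $\mathcal{G}_2 - x = (\mathcal{G}_2-\mathcal{G}_1)+(\mathcal{G}_1 - x)$. The second summand is $\delta(y_1 - \{x/\delta\}_{Q_1})$, bounded by $c\delta$ since $y_1,\{x/\delta\}_{Q_1}\in\overline{Q_1}$. Combined with (i) and $\eta\le\delta$, this yields $|\mathcal{G}_2(x,y_1,y_2) - x|\le c\delta$. The whole argument is routine arithmetic; the only thing to be careful with is the bookkeeping of floor/fractional parts and constants, in particular that the special choice $\iota_{2,\eta} = \{\tfrac{\delta}{\eta}\lfloor x/\delta\rfloor_{Q_1}\}_{Q_2}$ made in Definition \ref{def:second_unfolding} plays no role for these pointwise estimates beyond ensuring $\iota_{2,\eta}\in\overline{Q_2}$, so I do not anticipate any genuine obstacle here.
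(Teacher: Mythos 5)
Your proof is correct and complete. The paper itself gives no proof of this proposition, instead referring to \cite{cioranescu2008periodic,damlamian2022periodic}; the argument you give — reading off the identities $\mathcal{U}_1 u = u\circ\mathcal{G}_1$ and $\mathcal{U}_2 u = u\circ\mathcal{G}_2$ directly from Definitions \ref{def:first_unfolding} and \ref{def:second_unfolding}, then using the decompositions $\delta y_1 = \eta\lfloor\delta y_1/\eta\rfloor_{Q_2} + \eta\{\delta y_1/\eta\}_{Q_2}$ and $x = \delta\lfloor x/\delta\rfloor_{Q_1} + \delta\{x/\delta\}_{Q_1}$ together with boundedness of $\overline{Q_1}$, $\overline{Q_2}$ and $\eta\le\delta$ — is exactly the standard argument one finds there, and you have correctly identified that the specific form of $\iota_{2,\eta}$ is irrelevant beyond $\iota_{2,\eta}\in\overline{Q_2}$.
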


\begin{proposition}\label{properties_unfolding}
    Let $\phi \in L^1 (\Omega; \R^M)$, and let $v,w \in L^2 (\Omega; \R^M)$. Let $\widehat{\Omega}_1$ be the image of $\widehat{\Omega}_\delta \times Q_1$ under the map $\mathcal{G}_1$, and let $\widehat{\Omega}_2$ be the image of $\widehat{\Omega}_\delta \times \widehat{Q}_{1,\eta} \times Q_2$ under the map $\mathcal{G}_2$.\\
    Then the following hold:
    \begin{itemize}\setlength\itemsep{0.3cm}
        \item[(i)] $\mathcal{U}_1 (vw) = \mathcal{U}_1 v \cdot \mathcal{U}_1 w$ on $\widehat{\Omega}_\delta \times Q_1$;
        \item[(ii)] $\mathcal{U}_2 (vw) = \mathcal{U}_2 v \cdot \mathcal{U}_2 w $ on $\widehat{\Omega}_\delta \times \widehat{Q}_{1,\eta} \times Q_2$;
        \item[(iii)] $ \displaystyle \int_\Omega \int_{Q_1} \mathcal{U}_1 \phi (x,y_1) \dd y_1 \dd x = \int_{\widehat{\Omega}_1} \phi (x) \dd x + a|\Omega \setminus \widehat{\Omega}_1|$;
        \item[(iv)] $ \displaystyle \int_\Omega \int_{Q_1} \int_{Q_2} \mathcal{U}_2 \phi (x,y_1, y_2) \dd y_1 \dd y_2 \dd x = \int_{\widehat{\Omega}_2} \phi (x) \dd x + a|\Omega \setminus \widehat{\Omega}_2| $;
        \item[(v)] $\displaystyle \left| \int_\Omega \int_{Q_1} \mathcal{U}_1 \phi (x,y_1) \dd y_1 \dd x - \int_\Omega \phi(x) \dd x \right| \leq \int_{\Omega \setminus \widehat{\Omega}_1} | \phi |(x) \dd x + |a| | \Omega \setminus \widehat{\Omega}_1|$;
        \item[(vi)] $\displaystyle \left| \int_\Omega \int_{Q_1} \int_{Q_2} \mathcal{U}_2 \phi (x,y_1,y_2) \dd y_1 \dd y_2 \dd x - \int_\Omega \phi(x) \dd x \right| \leq \int_{\Omega \setminus \widehat{\Omega}_2} | \phi |(x) \dd x + |a| | \Omega \setminus \widehat{\Omega}_2 | $.
        \item[(vii)] for any continuous function $F \colon \Omega \to \R$ we have 
        $$ \mathcal{U}_2 ( F \circ \phi )(x,y_1,y_2) = F \circ (\mathcal{U}_2 \phi)(x,y_1,y_2) \qquad (x,y_1,y_2) \in \widehat{\Omega}_\delta \times \widehat{Q}_{1,\eta} \times Q_2, $$
        where the compositions are meant component-wise.
    \end{itemize}
\end{proposition}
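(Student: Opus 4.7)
The plan is to dispatch the six properties in the order they are listed, grouping them into three families (products, integration identities, and error estimates), since each family admits a common technique.

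For properties (i) and (ii) I would start from the identity $\mathcal{U}_i u = u \circ \mathcal{G}_i$ stated in the previous proposition, which is valid on the good sets $\widehat{\Omega}_\delta \times Q_1$ (for $i=1$) and $\widehat{\Omega}_\delta \times \widehat{Q}_{1,\eta} \times Q_2$ (for $i=2$). Because pullback along any measurable map commutes with pointwise multiplication, on these sets
\[
\mathcal{U}_i(vw) = (vw)\circ \mathcal{G}_i = (v\circ \mathcal{G}_i)(w\circ \mathcal{G}_i) = \mathcal{U}_i v \cdot \mathcal{U}_i w.
\]
No further ingredient is required.

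For properties (iii) and (iv) the plan is a direct change-of-variables cell-by-cell, mimicking the unfolding argument in the proof of Lemma \ref{lemma:needed}. For (iii), decompose $\widehat{\Omega}_\delta = \bigsqcup_{\xi_1 \in \Xi_1} \delta(\xi_1 + Q_1)$ (up to a null set). On each cell, $\mathcal{U}_1 \phi(x, y_1) = \phi(\delta\xi_1 + \delta y_1)$ is independent of $x$, so Fubini followed by the change of variable $z = \delta\xi_1 + \delta y_1$ (Jacobian $\delta^N$) turns the double integral into $\sum_{\xi_1} \int_{\delta(\xi_1+Q_1)} \phi\,\dd z = \int_{\widehat\Omega_1}\phi\,\dd x$. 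The contribution over $\Lambda_\delta \times Q_1$ disappears because $\mathcal{U}_1\phi$ is the constant $a$ there and the image set $\widehat\Omega_1$ is exactly $\widehat\Omega_\delta$, so after subtracting these contributions nothing is left. (More precisely, since the boundary piece produces a constant, it cancels against the analogous piece on the right hand side under the conventions used in the paper; if needed one can absorb it at the level of equality modulo the boundary set.) For (iv) one applies the same recipe twice: first decompose $\widehat{\Omega}_\delta \times \widehat{Q}_{1,\eta} = \bigsqcup_{\xi_1,\xi_2} \delta(\xi_1+Q_1) \times \tfrac{\eta}{\delta}(\xi_2 - \iota_{2,\eta}+Q_2)$, then observe that $\mathcal{U}_2\phi$ is constant in $(x,y_1)$ on each such cell, and perform the change of variables $z = \delta\xi_1 + \eta\xi_2 - \eta\iota_{2,\eta} + \eta y_2$ (Jacobian $\eta^N$). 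By the definition of $\iota_{2,\eta}$ (see the calculation in Lemma \ref{lemma:needed}) the resulting collection of shifted cells in $z$ is pairwise (a.e.) disjoint and their union is $\widehat{\Omega}_2$, yielding the identity.

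For properties (v) and (vi) the plan is simply to split $\int_\Omega \phi\,\dd x = \int_{\widehat{\Omega}_i}\phi\,\dd x + \int_{\Omega \setminus \widehat{\Omega}_i}\phi\,\dd x$, invoke (iii) or (iv) to replace $\int_{\widehat{\Omega}_i}\phi\,\dd x$ by the corresponding unfolded integral, and control the leftover by the triangle inequality:
\[
\Bigl|\int_\Omega \int_{Q_1}\mathcal{U}_1\phi\,\dd y_1\dd x - \int_\Omega \phi\,\dd x\Bigr| = \Bigl|\int_{\Omega\setminus\widehat\Omega_1}\phi\,\dd x\Bigr| \le \int_{\Omega\setminus \widehat\Omega_1}|\phi|\,\dd x,
\]
and analogously for the double-scale case. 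The only mild obstacle in the whole argument is the careful bookkeeping in (iv) of the shift vector $\iota_{2,\eta}$ to ensure that the images of the inner cells under $\mathcal{G}_2$ partition $\widehat{\Omega}_2$ without overlap; this is precisely the content of the identity $\lfloor \delta y_1/\eta\rfloor_{Q_2} = \xi_2$ verified at the end of the proof of Lemma \ref{lemma:needed}, which I would simply quote.
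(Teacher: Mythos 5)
The paper itself does not prove this proposition---it refers the reader to the unfolding papers \cite{cioranescu2008periodic,damlamian2022periodic}---so there is no internal proof to compare against. Your strategy is the standard one: (i)--(ii) from the pullback identity $\mathcal{U}_i u = u\circ\mathcal{G}_i$, (iii)--(iv) from the cell-by-cell change of variables, and (v)--(vi) as immediate corollaries of (iii)--(iv). That outline is right, and you correctly isolate the only delicate point, namely the contribution of the boundary cells.

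However, your treatment of that point is where the argument breaks down. You assert that the contribution of $\Lambda_\delta\times Q_1$, where $\mathcal{U}_1\phi$ is the constant $a$, ``cancels against the analogous piece on the right hand side.'' There is no analogous piece: the right-hand side of (iii) is $\int_{\widehat\Omega_1}\phi\,\dd x$, and $\widehat\Omega_1=\widehat\Omega_\delta$ is disjoint from $\Lambda_\delta$, so there is nothing to cancel against. Carrying out your own change of variables literally with the paper's definition gives
\[
\int_\Omega\int_{Q_1}\mathcal{U}_1\phi\,\dd y_1\,\dd x \;=\; \int_{\widehat\Omega_1}\phi\,\dd x \;+\; a\,|Q_1|\,|\Lambda_\delta|,
\]
with an analogous extra term for $\mathcal{U}_2$. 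In other words, the exact identities (iii)--(iv) (and hence the error bounds (v)--(vi)) hold as written only for the \emph{standard} unfolding operator, which vanishes on the boundary cells, not for the paper's modified operator which is set equal to $a$ there. The phrase ``absorb it at the level of equality modulo the boundary set'' does not resolve this: these are equalities of integrals over $\Omega$, not of functions modulo a null set, and the constant term does not disappear. The clean fix is to either prove the proposition for the classical zero-on-boundary operator (which is what your change-of-variables computation actually produces, and what the cited references do), or to state (iii)--(vi) with the additive correction $a|Q_1||\Lambda_\delta|$ (respectively its two-scale analogue), noting that it is asymptotically negligible and hence harmless in all later uses. As written, the parenthetical is the one genuine gap in an otherwise correct argument.
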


\begin{proposition}
    Let $w \in L^2 (\Omega; \R^M)$. Then the following hold:
    \begin{itemize}\setlength\itemsep{0.2cm}
            \item[(i)] $\displaystyle \mathcal{U}_1 w \to w $ strongly in $L^2 (\Omega; L^2 (Q_1; \R^M))$ as $\delta \to 0$;
            \item[(ii)] $\displaystyle \mathcal{U}_2 w \to w $ strongly in $L^2 (\Omega; L^2 (Q_1; L^2( Q_2; \R^M)))$ as $\eta,\delta \to 0$.
        \end{itemize}
\end{proposition}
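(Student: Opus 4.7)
The plan is to prove both (i) and (ii) by the classical density-plus-uniform-continuity argument that is standard for the single-scale unfolding operator in \cite{cioranescu2008periodic} and that adapts to the double-scale setting by iterating. First I would record boundedness: property (iii) of Proposition \ref{properties_unfolding} applied to $\phi = |w|^2$ yields
$$\|\mathcal{U}_1 w\|_{L^2(\Omega \times Q_1; \R^M)}^2 = \int_{\widehat{\Omega}_1} |w|^2 \dd x \leq \|w\|_{L^2(\Omega; \R^M)}^2,$$
and similarly (iv) gives $\|\mathcal{U}_2 w\|_{L^2(\Omega\times Q_1 \times Q_2; \R^M)} \leq \|w\|_{L^2(\Omega; \R^M)}$. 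Hence both operators are $1$-Lipschitz.

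Next I would establish convergence on a dense subset by taking $w \in C_c(\Omega; \R^M)$, where uniform continuity is at my disposal. Given $\varepsilon > 0$, let $\omega > 0$ be such that $|w(x) - w(x')| < \varepsilon$ whenever $|x-x'|<\omega$. From the bounds $|\mathcal{G}_1(x,y_1)-x| \leq c\delta$ and $|\mathcal{G}_2(x,y_1,y_2)-x| \leq c\delta$ from the second Proposition, I deduce that for $\delta$ small enough, $|\mathcal{U}_1 w - w| < \varepsilon$ pointwise on $\widehat{\Omega}_\delta \times Q_1$, and $|\mathcal{U}_2 w - w| < \varepsilon$ pointwise on $\widehat{\Omega}_\delta \times \widehat{Q}_{1,\eta} \times Q_2$. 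On the complementary boundary sets, where $\mathcal{U}_i w$ equals the constant $a$, the contribution to the $L^2$-norm is controlled by $(\|w\|_\infty + |a|)^2$ times the measure of $\Lambda_\delta \times Q_1$ (which vanishes as $\delta \to 0$ because $\partial\Omega$ is Lipschitz) and of $\widehat{\Omega}_\delta \times \Lambda_{1,\eta} \times Q_2$ (which vanishes as $\eta/\delta \to 0$ because $\partial Q_1$ is Lipschitz).

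Finally, density of $C_c(\Omega; \R^M)$ in $L^2(\Omega; \R^M)$ combined with the $1$-Lipschitz property extends the convergence to every $w \in L^2$ via the standard three-term splitting: pick $\tilde{w} \in C_c(\Omega; \R^M)$ with $\|w-\tilde{w}\|_{L^2} < \varepsilon$ and write
$$\|\mathcal{U}_i w - w\|_{L^2} \leq \|\mathcal{U}_i(w-\tilde{w})\|_{L^2} + \|\mathcal{U}_i \tilde{w} - \tilde{w}\|_{L^2} + \|\tilde{w} - w\|_{L^2} \leq 2\varepsilon + o(1).$$
Letting $\varepsilon \to 0$ concludes the proof.

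The main subtlety, and really the only new point compared to the one-scale case, lies in (ii): the uniform-continuity step already works with $\delta \to 0$ alone (since $\mathcal{G}_2$ approximates the identity at scale $\delta$, not $\eta$), but the vanishing of the boundary layer $\Lambda_{1,\eta}$ inside $Q_1$ genuinely requires $\eta/\delta \to 0$, for otherwise $\widehat{Q}_{1,\eta}$ would fail to exhaust $Q_1$ and the constant-$a$ region would leak into the limit. This is precisely the role of the separation-of-scales hypothesis in the statement.
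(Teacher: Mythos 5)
The paper does not prove this proposition; it cites \cite{cioranescu2008periodic,damlamian2022periodic}. Your proof is a correct, self-contained version of the standard argument used in those references: boundedness plus uniform continuity on a dense class plus a three-term density argument. You also correctly pinpoint where each hypothesis is used, including the observation that the uniform-continuity step only needs $\delta\to 0$ (since $|\mathcal G_2(x,y_1,y_2)-x|\le c\delta$), while $\eta/\delta\to 0$ is required to make the inner boundary layer $\Lambda_{1,\eta}$ vanish.

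One small point worth flagging: the paper uses a nonstandard convention in Definitions \ref{def:first_unfolding}--\ref{def:second_unfolding}, setting $\mathcal U_i\phi$ equal to the constant $a\in\R^M$ (rather than $0$) on the boundary sets, which makes the operators affine rather than linear. Your $1$-Lipschitz claim for $\mathcal U_i$ therefore has to be read as applying to \emph{differences}: if you compute $\mathcal U_i w-\mathcal U_i\tilde w$, the boundary constants cancel, so the bound $\|\mathcal U_i w-\mathcal U_i\tilde w\|_{L^2}\le\|w-\tilde w\|_{L^2}$ still holds and your density step goes through unchanged. Likewise, in the boundedness computation via Proposition~\ref{properties_unfolding}(iii)--(iv), one must treat the boundary set separately, since $|\mathcal U_1 w|^2=|a|^2$ there rather than $\mathcal U_1(|w|^2)$; again this is harmless because that contribution is $O(|\Lambda_\delta|)$ and does not affect the convergence. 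These are cosmetic adjustments consistent with the spirit of your argument, not gaps in it.
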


\begin{proposition}
    Let $(w_\delta)_{\delta \to 0} \subset L^2 (\Omega; \R^M)$ be a sequence converging strongly to $w_1 \in L^2 (\Omega; \R^M)$ as $\delta \to 0$, and let $(w_\eta)_{\eta \to 0} \subset L^2 (\Omega; \R^M)$ be a sequence converging strongly to $w_2 \in L^2 (\Omega; \R^M)$ as $\eta, \delta \to 0$. Then the following hold:
        \begin{itemize}\setlength\itemsep{0.2cm}
            \item[(i)] $ \mathcal{U}_1 w_\delta \to w_1 $ strongly in $L^2 (\Omega; L^2 (Q_1; \R^M))$ as $\delta \to 0$;
            \item[(ii)] $ \mathcal{U}_2 w_\eta \to w_2 $ strongly in $L^2 (\Omega; L^2 (Q_1; L^2 (Q_2; \R^M)))$ as $\eta,\delta \to 0$.
        \end{itemize}
\end{proposition}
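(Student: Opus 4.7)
The plan is to derive both statements from two facts already established in the preceding propositions, namely (a) the $L^2$-boundedness of each unfolding operator and (b) the strong convergence $\mathcal{U}_i w \to w$ when the input $w$ is a \emph{fixed} function, combined with the standard trick of adding and subtracting an intermediate term. The only subtlety is that $\mathcal{U}_1$ and $\mathcal{U}_2$ are not globally linear: on the boundary layers $\Lambda_\delta \times Q_1$ and $(\Omega \times Q_1) \setminus (\widehat{\Omega}_\delta \times \widehat{Q}_{1,\eta}) \times Q_2$ both operators are set to the constant value $a$ by definition. On the proper parts, however, they act by composition with the maps $\mathcal{G}_1$, $\mathcal{G}_2$ and hence are linear, which is all that is required.

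For item (i), I would write
\[
\mathcal{U}_1 w_\delta - w_1 \;=\; \bigl(\mathcal{U}_1 w_\delta - \mathcal{U}_1 w_1\bigr) \;+\; \bigl(\mathcal{U}_1 w_1 - w_1\bigr).
\]
The second summand tends to $0$ in $L^2(\Omega; L^2(Q_1; \R^M))$ by applying the preceding proposition with the fixed function $w_1$. For the first summand, note that on $\Lambda_\delta \times Q_1$ both $\mathcal{U}_1 w_\delta$ and $\mathcal{U}_1 w_1$ are equal to $a$ and therefore cancel out, while on $\widehat{\Omega}_\delta \times Q_1$ the linearity of the composition with $\mathcal{G}_1$ gives $\mathcal{U}_1 w_\delta - \mathcal{U}_1 w_1 = \mathcal{U}_1(w_\delta - w_1)$. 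The boundedness of $\mathcal{U}_1$ then yields
\[
\|\mathcal{U}_1 w_\delta - \mathcal{U}_1 w_1\|_{L^2(\Omega \times Q_1)} \;\leq\; C \, \|w_\delta - w_1\|_{L^2(\Omega)},
\]
which tends to $0$ by hypothesis. Item (ii) then follows by exactly the same strategy applied to $\mathcal{U}_2$: split $\mathcal{U}_2 w_\eta - w_2$ as $(\mathcal{U}_2 w_\eta - \mathcal{U}_2 w_2) + (\mathcal{U}_2 w_2 - w_2)$, dispose of the second piece via the fixed-function convergence of the preceding proposition, and control the first piece via the linearity of $\mathcal{U}_2$ on $\widehat{\Omega}_\delta \times \widehat{Q}_{1,\eta} \times Q_2$ (the two unfoldings already agreeing as the constant $a$ on the complementary set) combined with the bound $\|\mathcal{U}_2 v\|_{L^2} \leq C \|v\|_{L^2}$.

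I do not anticipate any real obstacle: the entire technical content of the double-scale construction, in particular the correction $\iota_{2,\eta}$ introduced to reconcile the two lattices, has already been absorbed into the two black-box statements coming from \cite{cioranescu2008periodic, damlamian2022periodic}. The only point that requires a brief reminder is the boundary convention assigning the unfolded function the value $a$; but this is precisely what makes the relevant differences \emph{vanish} on the boundary layer, so the convention actually simplifies rather than complicates the argument.
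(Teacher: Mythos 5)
Your argument is correct, and it is worth noting at the outset that the paper itself does \emph{not} prove this proposition: the text immediately above the two ``Proposition'' statements about the unfolding operators says that ``for their proofs we refer the reader to \cite{cioranescu2008periodic,damlamian2022periodic}.'' There is therefore no in-text proof to compare against. Your decomposition
\[
\mathcal{U}_i w - w_i \;=\; \bigl(\mathcal{U}_i w - \mathcal{U}_i w_i\bigr) \;+\; \bigl(\mathcal{U}_i w_i - w_i\bigr), \qquad i=1,2,
\]
together with the boundedness of the unfolding on the proper part and the fixed-function convergence from the preceding proposition, is precisely the standard argument in the unfolding literature, so you have essentially reconstructed the cited proof.

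One point you handled correctly but is worth making explicit, because the paper itself glosses over it: with the nonstandard convention $\mathcal{U}_1 \phi \equiv a$ and $\mathcal{U}_2 \phi \equiv a$ on the boundary layers, the operators are affine rather than linear, so the earlier proposition's claim that $\mathcal{U}_1$ is ``linear, continuous and bounded'' is a mild abuse. Your fix is the right one: the constant $a$ cancels in the difference $\mathcal{U}_i w_\delta - \mathcal{U}_i w_i$ on the boundary set, and on the proper set $\mathcal{U}_i$ is genuine composition with $\mathcal{G}_i$, hence linear, so the change-of-variables identity yields $\|\mathcal{U}_i w_\delta - \mathcal{U}_i w_i\|_{L^2} \leq C\|w_\delta - w_i\|_{L^2(\Omega)}$. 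Just be careful not to write the intermediate identity $\mathcal{U}_1 w_\delta - \mathcal{U}_1 w_1 = \mathcal{U}_1(w_\delta - w_1)$ as a global equality of functions, since the two sides differ by the constant $a$ on $\Lambda_\delta \times Q_1$; state it, as you in fact did, as an equality on the proper set only, with both sides of the estimate supported there.
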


\subsection{$\Gamma$-convergence}

In this section, we recall the definition and the basic properties of $\Gamma$-limits.
Since in this paper we work in the setting of the metric space $L^1(\o;\R^M)$, we will present the equivalent definition with sequences.
We refer to \cite{Dalmasobook} (see also \cite{Braides}) for a complete study of $\Gamma$-convergence on topological spaces.

\begin{definition}\label{def:gc}
Let $(X,\mathrm{d})$ be a metric space, and let $(F_n)_n$ be a sequence of functionals $F_n:X\to[-\infty,+\infty]$.
We say that $(F_n)_n$ $\Gamma$-converges to $F:X\to[-\infty,+\infty]$
with respect to the metric $\mathrm{d}$, if the following hold:
\begin{itemize}
\item[(i)] (Liminf inequality) For every $x\in X$ and every $(x_n)_n\subset X$ with $x_n\to x$, we have
\[
F(x)\leq\liminf_{n\to\infty} F_n(x_n),
\]
\item[(ii)] (Limsup inequality) For every $x\in X$, there exists $(x_n)_n\subset X$ such that $x_n \to x$ and
\[
\limsup_{n\to\infty} F_n(x_n)\leq F(x),
\]
and with $x_n\to x$.
\end{itemize}
\end{definition}

The notion of $\Gamma$-convergence was designed to characterize in a variational way the limiting behavior of sequences of global minimizers, as well as of the minima (see, for example, \cite[Corollary 7.20]{Dalmasobook}).

\begin{theorem}\label{thm:convmin}
Let $(X,\mathrm{d})$ be a metric space.
Consider, for each $n\in\N$, a functional $F_n: X \to \R\cup\{\infty\}$, and assume that the sequence $(F_n)_n$ $\Gamma$-converges to some $F: X \to \R\cup\{\infty\}$.
For each $n \in \N$, let $x_n \in X$ be a minimizer of $F_n$ on $X$.
Then, every cluster point $x\in X$ of $(x_n)_n$ is a minimizer of $F$, and
\[
F(x) = \limsup_{n\to\infty} F_n(x_n).
\]
If the point $x\in X$ is a limit of the sequence $(x_n)_n$, then the above limsup is actually a limit.
\end{theorem}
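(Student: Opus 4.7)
The plan is to combine the two inequalities in the definition of $\Gamma$-convergence with the minimality hypothesis on each $x_n$. The key observation is that comparing the minimizer $x_n$ with a recovery sequence for an arbitrary $y \in X$ gives an upper bound on $\limsup_n F_n(x_n)$ in terms of $F(y)$, which simultaneously identifies $x$ as a minimizer and supplies the upper-bound half of the identity $F(x) = \limsup_n F_n(x_n)$. The matching lower bound comes from applying the liminf inequality along the subsequence that realizes the cluster point $x$.

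First I would fix an arbitrary $y \in X$ and use the limsup inequality to produce a sequence $(y_n)_n \subset X$ with $y_n \to y$ and $\limsup_n F_n(y_n) \leq F(y)$ (the assertion is trivial if $F(y) = +\infty$). Minimality of $x_n$ yields $F_n(x_n) \leq F_n(y_n)$ for every $n$, hence $\limsup_n F_n(x_n) \leq F(y)$. Specializing $y = x$ gives $\limsup_n F_n(x_n) \leq F(x)$. Next, I would extract a subsequence $x_{n_k} \to x$ realizing the cluster point and apply the liminf inequality along this subsequence to deduce $F(x) \leq \liminf_k F_{n_k}(x_{n_k})$. Chaining these estimates,
\[
F(x) \leq \liminf_k F_{n_k}(x_{n_k}) \leq \limsup_k F_{n_k}(x_{n_k}) \leq \limsup_n F_n(x_n) \leq F(x),
\]
forces every inequality to be an equality, giving $F(x) = \limsup_n F_n(x_n)$. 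Combined with the bound $\limsup_n F_n(x_n) \leq F(y)$ for arbitrary $y \in X$, this also yields $F(x) \leq F(y)$ for all $y$, so $x$ is a minimizer of $F$.

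For the final assertion, if the full sequence $(x_n)_n$ converges to $x$ (not merely a subsequence), the liminf inequality applies directly to $(x_n)_n$ and yields $F(x) \leq \liminf_n F_n(x_n)$. Combined with the previously established $\limsup_n F_n(x_n) \leq F(x)$, liminf and limsup coincide, so the full limit $\lim_n F_n(x_n)$ exists and equals $F(x)$. No genuine obstacle is expected: the argument is entirely structural, relying only on the two defining inequalities of $\Gamma$-convergence together with the minimality of $x_n$; the only mild care required is the vacuous handling of the case $F(y) = +\infty$ in the recovery-sequence step.
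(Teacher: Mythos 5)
The paper does not prove Theorem \ref{thm:convmin}: it is quoted as a standard result and attributed to \cite[Corollary 7.20]{Dalmasobook}, so there is no internal proof to compare against. Your argument is the standard textbook proof and is essentially correct: you obtain $\limsup_n F_n(x_n)\leq F(y)$ for every $y\in X$ by comparing $x_n$ with a recovery sequence for $y$, which in particular gives $\limsup_n F_n(x_n)\leq F(x)$, and then you close the loop with the liminf inequality along the convergent subsequence.

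One small point worth making explicit. In the step where you write $F(x)\leq\liminf_k F_{n_k}(x_{n_k})$ you are applying the $\Gamma$-liminf inequality to the \emph{subsequence} $(F_{n_k})_k$, not to the full sequence $(F_n)_n$; Definition \ref{def:gc} as written is for the full sequence. This is legitimate because $\Gamma$-convergence is inherited by every subsequence, but that fact requires a short argument. Given $z_k\to z$, pick a further subsequence of $(n_k)$ realizing $\liminf_k F_{n_k}(z_k)$, fill the sequence in at the remaining indices $n$ by a value converging to $z$ (e.g.\ $w_n=z_k$ for $n_k\leq n<n_{k+1}$), apply the liminf inequality for the full sequence $(F_n)$ to $(w_n)$, and use that the liminf over the full index set is bounded above by the liminf over any subsequence: $F(z)\leq\liminf_n F_n(w_n)\leq\liminf_k F_{n_k}(z_k)$. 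With that clause inserted your proof is complete. The final paragraph, treating the case where the whole sequence $(x_n)_n$ converges, is fine as written since there the full-sequence liminf inequality applies directly.
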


\subsection{Sets of finite perimeter}

We recall the definition and some basic facts about sets of finite perimeter that are needed in the paper. For more details on the subject, we refer the reader to standard references, such as \cite{AFP, EG, Giusti, MaggiBook}.

\begin{definition}
Let $E\subset\R^N$ with $|E|<\infty$, and let $A\subset\R^N$ be an open set.
We say that $E$ has \emph{finite perimeter} in $A$ if
\[
P(E;A)\coloneqq\sup\left\{\, \int_E \mathrm{div}\varphi \,d x \,:\, \varphi\in C^1_c(A;\R^N)\,,\, \|\varphi\|_{L^\infty}\leq1  \,\right\}<\infty.
\]
\end{definition}

\begin{definition}
Let $a,b\in\R^M$. We define the space $BV(\Omega;\{a,b\})$ as the space of functions $u\in L^1(\Omega;\R^M)$ with $u(x)\in\{a,b\}$ for a.e. $x\in\Omega$, and such that the set $\{x\in\Omega : u(x)=a\}$ has finite perimeter in $\Omega$.
\end{definition}

\begin{definition}
Let $E\subset\R^N$ be a set of finite perimeter in the open set $A\subset\R^N$. We define $\partial^* E$, the \emph{reduced boundary} of $E$, as the set of points $x\in\R^N$ for which the limit
\[
\nu_E(x)\coloneqq -\lim_{r\to0}\frac{D\chi_E(B(x,r))}{|D\chi_E|(B(x,r))}
\]
exists and is such that $|\nu_E(x)|=1$.
The vector $\nu_E(x)$ is called the \emph{measure theoretic exterior normal} to $E$ at $x$.
\end{definition}

We recall part of the De Giorgi's structure theorem for sets of finite perimeter.

\begin{theorem}
Let $E\subset\R^N$ be a set of finite perimeter in the open set $A\subset\R^N$. Then,
\[
P(E, B) = \mathcal{H}^{N-1}(\partial^* E\cap B),
\]
for all Borel sets $B\subset A$.
\end{theorem}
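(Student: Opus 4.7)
The plan is to follow the classical blow-up argument of De Giorgi. The proof splits into three stages: concentration of the perimeter measure on the reduced boundary, a blow-up analysis at each point of $\partial^* E$, and identification of the $(N-1)$-dimensional density, from which the conclusion follows by a standard comparison of Radon measures.

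First, I would show that the perimeter measure $\mu \coloneqq |D\chi_E|$ is concentrated on $\partial^* E$, that is, $\mu(A \setminus \partial^* E) = 0$. By the Besicovitch differentiation theorem applied to the vector measure $D\chi_E$ with respect to its total variation $\mu$, the limit $\nu_E(x)$ exists $\mu$-a.e., and a testing argument against smooth compactly supported vector fields (combined with the same differentiation theorem) shows that $|\nu_E(x)| = 1$ $\mu$-a.e. Hence $\mu$-a.e. point of $A$ lies in $\partial^* E$.

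The main technical step is the blow-up analysis. Fix $x_0 \in \partial^* E$ and set $E_r \coloneqq (E - x_0)/r$. The goal is to show that $E_r \to H^-_{\nu_E(x_0)} \coloneqq \{y \in \R^N : y \cdot \nu_E(x_0) \leq 0\}$ in $L^1_{loc}(\R^N)$ as $r \to 0^+$. The relative isoperimetric inequality together with an upper density estimate for $\mu$ yields the uniform bound $|D\chi_{E_r}|(B(0,R)) \leq C(R)$; BV compactness then produces a limit $F$ along subsequences. The definition of the reduced boundary forces the weak-$*$ limit of $D\chi_{E_r}$ to have constant direction $-\nu_E(x_0)$, and a rigidity result for BV functions whose distributional derivative has a fixed direction identifies $F$ with the half-space $H^-_{\nu_E(x_0)}$ up to a Lebesgue negligible set. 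Uniqueness of the limit gives full convergence as $r \to 0^+$.

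Passing to the limit in the equality $\mu(B(x_0,r))/r^{N-1} = |D\chi_{E_r}|(B(0,1))$, using lower semicontinuity of the perimeter together with the explicit computation $|D\chi_{H^-_{\nu}}|(B(0,1)) = \omega_{N-1}$ (where $\omega_{N-1}$ is the volume of the unit ball in $\R^{N-1}$), and handling the possible concentration on $\partial B(0, 1)$ by considering slightly smaller and larger radii, I obtain
\[
\lim_{r \to 0^+} \frac{\mu(B(x_0,r))}{\omega_{N-1}\, r^{N-1}} = 1 \qquad \text{for every } x_0 \in \partial^* E.
\]
The same $(N-1)$-density formula holds for $\hno \restr \partial^* E$ at every point of $\partial^* E$. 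Since both $\mu$ and $\hno \restr \partial^* E$ are Radon measures concentrated on $\partial^* E$ and have the same density there, a classical differentiation theorem for Radon measures (see \cite{AFP, MaggiBook}) yields $\mu = \hno \restr \partial^* E$ on Borel subsets of $A$, which is the claim. The main obstacle is the blow-up analysis: obtaining the uniform perimeter bound on the rescaled sets and carrying out the rigidity argument that identifies the limit as a half-space both require substantial BV machinery, but both are by now classical.
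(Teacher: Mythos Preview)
Your outline is the classical De Giorgi blow-up argument and is correct. Note, however, that the paper does not actually prove this statement: it is quoted as part of De Giorgi's structure theorem in the preliminaries, with references to the standard textbooks \cite{AFP, EG, Giusti, MaggiBook}, so there is no ``paper's own proof'' to compare against---your sketch is precisely what one finds in those references.
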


\section{Technical results}

From this section on, unless explicitly stated, we denote by $C$ a universal constant positive, that may vary from line to line.

\subsection{Estimates for sequences with uniformly bounded energies} \hfill

Let $(u_n)_n \subset W^{1,2}(\Omega; \R^M)$ be a sequence such that 
$$ \sup_n F^{(1)}_n (u_n) = C < +\infty. $$
Then,
\begin{equation*}
    \| \nabla u_n \|^2_{L^2 (\Omega; \R^{N \times M})} = \int_\Omega | \nabla u_n |^2 \dd x = \cfrac{1}{\varepsilon_n} \int_\Omega \varepsilon_n | \nabla u_n |^2 \dd x \leq \cfrac{1}{\varepsilon_n} F^{(1)}_n (u_n) \leq \cfrac{C}{\varepsilon_n}.
\end{equation*}
Using the chain rule from \eqref{eq:chain_1} and from \eqref{eq:chain_2} we can therefore deduce that
\begin{align}
    \| \nabla_{y_1} \mathcal{U}_1 u_n \|^2_{L^2 (\Omega; L^2 (Q_1; \R^{N \times M}))} &\leq \delta_n^2 \| \nabla u_n \|^2_{L^2 (\Omega; \R^{N \times M})} \leq C \cfrac{\delta_n^2}{\varepsilon_n}, \label{eq:chain_rule_psi1} \\
    \| \nabla_{y_2} \mathcal{U}_2 u_n \|^2_{L^2 (\Omega; L^2 (Q_1 \times Q_2; \R^{N \times M}))} &\leq \eta_n^2 \| \nabla u_n \|^2_{L^2 (\Omega; \R^{N \times M})} \leq C \cfrac{\eta_n^2}{\varepsilon_n}. \label{eq:chain_rule_psi2}
\end{align}
We will now state a theorem that is needed for a key-step of the proof. We will only give the proof for the second formula involving the two-scale unfolding operator, and we refer to \cite[Theorem 3.2]{CriFonGan_fixed_Sup} for the proof of the first one.
\begin{theorem}\label{thm:energy_estimates}
    Let $(u_n)_n \subset W^{1,2} (\Omega; \R^M)$ such that 
    $$ \sup_n F^{(1)}_n (u_n) = C < +\infty, \qquad \sup_n \| u_n \|_\infty \leq M < +\infty. $$
    Then, it holds that
    \begin{align}
        &\| \mathcal{U}_1 u_n - u_n \|^2_{L^2 (\Omega; L^2 (Q_1; \R^M))} \leq C \delta_n, \label{eq:est_psi1}\\
        &\| \mathcal{U}_2 u_n - \mathcal{U}_1 u_n \|^2_{L^2 (\Omega; L^2 (Q_1; L^2 (Q_2; \R^M)))} \leq C \cfrac{\eta_n}{\delta_n}. \label{eq:est_psi2}
    \end{align}
\end{theorem}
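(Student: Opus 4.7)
The plan is to split the integration domain $\Omega \times Q_1 \times Q_2$ into the ``proper set'' $\widehat{\Omega}_\delta \times \widehat{Q}_{1,\eta} \times Q_2$ (on which both unfoldings are compositions $u_n \circ \mathcal{G}_i$) and its complement, and analyse the two parts separately. On the complement the integrand is easy to control: on $\Lambda_\delta \times Q_1 \times Q_2$ both operators equal $a$, so the difference vanishes; on $\widehat{\Omega}_\delta \times \Lambda_{1,\eta} \times Q_2$ only $\mathcal{U}_2 u_n$ equals $a$ while $\mathcal{U}_1 u_n = u_n \circ \mathcal{G}_1$ is bounded by $M$, so the squared difference is at most $(2M)^2$. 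Since $\Lambda_{1,\eta}$ is an inner boundary layer of $Q_1$ at scale $\eta_n/\delta_n$, its Lebesgue measure is bounded by $C\eta_n/\delta_n$, giving a contribution of the desired order.

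On the proper set, property (i) gives $|\mathcal{G}_2 - \mathcal{G}_1| \leq c\eta_n$, so the Sobolev fundamental theorem of calculus along line segments (valid since both points lie in the same $\delta_n$-cell, contained in $\Omega$) yields
\begin{align*}
|\mathcal{U}_2 u_n - \mathcal{U}_1 u_n|^2 \leq c^2 \eta_n^2 \int_0^1 \bigl|\nabla u_n\bigl((1-t)\mathcal{G}_1 + t\mathcal{G}_2\bigr)\bigr|^2 \dd t.
\end{align*}
Decomposing $\widehat{\Omega}_\delta$ into $\delta_n$-cells $\delta_n(\xi_1 + Q_1)$ and $\widehat{Q}_{1,\eta}$ into sub-cells $\tfrac{\eta_n}{\delta_n}(\zeta - \iota_{2,\eta_n} + Q_2)$, both $\mathcal{G}_1$ and $\mathcal{G}_2$ become independent of $x$ on a fixed cell, and after the change of variables $y_1 = \tfrac{\eta_n}{\delta_n}(\zeta - \iota_{2,\eta_n} + z_1)$, $z_1 \in Q_2$, the specific choice of $\iota_{2,\eta_n}$ in Definition~\ref{def:second_unfolding} reduces $(1-t)\mathcal{G}_1 + t\mathcal{G}_2$ to the affine expression $A_{\xi_1,\zeta} + \eta_n\bigl((1-t)z_1 + ty_2\bigr)$. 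The crux of the argument is the estimate
\begin{align*}
\int_{Q_2 \times Q_2}\!\! \int_0^1 \bigl|\nabla u_n\bigl(A_{\xi_1,\zeta} + \eta_n((1-t)z_1 + ty_2)\bigr)\bigr|^2 \dd t \, \dd y_2 \, \dd z_1 \leq C \!\int_{cQ_2}\!\! \bigl|\nabla u_n(A_{\xi_1,\zeta} + \eta_n w)\bigr|^2 \dd w,
\end{align*}
which follows by splitting $t \in [0,1]$ into $[0,1/2]$ and $[1/2,1]$ and substituting $w = (1-t)z_1 + ty_2$ in $z_1$ on the first half and in $y_2$ on the second (whichever keeps the Jacobian $(1-t)^{-N}$ or $t^{-N}$ uniformly bounded). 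Rescaling $y = A_{\xi_1,\zeta} + \eta_n w$ and summing over sub-cells and cells with bounded overlap yields an interior contribution dominated by $C \eta_n^2 \|\nabla u_n\|^2_{L^2(\Omega)} \leq C\eta_n^2/\varepsilon_n$.

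Combining the two parts, the total squared norm is bounded by $C(\eta_n/\delta_n + \eta_n^2/\varepsilon_n)$; since $\eta_n \delta_n \leq \varepsilon_n$ for all large $n$ (because $\delta_n \leq \varepsilon_n$ eventually and $\eta_n \to 0$), the second term is absorbed into the first, yielding~\eqref{eq:est_psi2}. The main obstacle is the change of variables $w = (1-t)z_1 + ty_2$: the underlying map is $2N$-to-$N$ dimensional with Jacobians $(1-t)^{-N}$, $t^{-N}$ that degenerate at the endpoints, so the symmetric $t$-splitting is essential. One must also carefully verify the bounded overlap of the translates $A_{\xi_1,\zeta} + c\eta_n Q_2$ when summing over all sub-cells, which is precisely where the specific choice of $\iota_{2,\eta_n}$ made in Definition~\ref{def:second_unfolding} plays its role.
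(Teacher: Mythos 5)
Your decomposition into the proper set $\widehat{\Omega}_\delta \times \widehat{Q}_{1,\eta} \times Q_2$ and its complement, and the resulting rates ($C\eta_n/\delta_n$ from the boundary layer $\widehat{\Omega}_\delta \times \Lambda_{1,\eta} \times Q_2$, $C\eta_n^2/\varepsilon_n$ from the interior), match the paper's split and the paper's final bound $c(\eta_n^2/\varepsilon_n + \eta_n/\delta_n) \leq c\,\eta_n/\delta_n$ exactly. The mechanism for the interior term, however, is genuinely different: you use the pointwise segment--FTC bound $|u_n(\mathcal{G}_2)-u_n(\mathcal{G}_1)|^2 \leq c^2\eta_n^2\int_0^1|\nabla u_n((1-t)\mathcal{G}_1 + t\mathcal{G}_2)|^2\dd t$ followed by your $t$-split change of variables and a bounded-overlap count; the paper instead inserts the $Q_2$-average $(\mathcal{U}_2 u_n)_{Q_2}$ as an intermediate quantity, controls $\|\mathcal{U}_2 u_n - (\mathcal{U}_2 u_n)_{Q_2}\|^2_{L^2} \leq C\eta_n^2/\varepsilon_n$ via the Poincar\'e--Wirtinger inequality on $Q_2$, and then uses the operator identities $\mathcal{U}_{2,\eta}\mathcal{U}_1 u_n = \mathcal{U}_2 u_n$ and $\mathcal{U}_{2,\eta}(\mathcal{U}_2 u_n)_{Q_2} = (\mathcal{U}_2 u_n)_{Q_2}$ to show that the remaining piece $\int_{\widehat{\Omega}_\delta\times\widehat{Q}_{1,\eta}}|\mathcal{U}_1 u_n - (\mathcal{U}_2 u_n)_{Q_2}|^2$ equals the very same Poincar\'e quantity. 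Your route avoids those operator computations; theirs avoids the change of variables you correctly flag as the crux.

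There is, though, a gap you have not addressed: the segment--FTC step requires the whole segment $[\mathcal{G}_1,\mathcal{G}_2]$ to lie in $\Omega$, and your parenthetical justification (``both points lie in the same $\delta_n$-cell, contained in $\Omega$'') is not sufficient. The periodicity cells $Q_1$, $Q_2$ are only assumed to be bounded with Lipschitz boundary, not convex, so the $\delta_n$-cell $\delta_n(\xi_1+Q_1)$ and the subcell $A_{\xi_1,\zeta}+\eta_n Q_2$ need not be convex, and the segment joining two of their points may exit the cell and, a priori, exit $\Omega$, in which case $\nabla u_n$ is undefined along it. The Poincar\'e--Wirtinger inequality used in the paper only needs $Q_2$ connected and Lipschitz, so the paper's proof is immune to this. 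If one adds the (common but here not assumed) hypothesis that $Q_1$ or $Q_2$ is convex, your argument closes; in the general case you would need either to replace the pointwise FTC by a Poincar\'e estimate on $A_{\xi_1,\zeta}+\eta_n Q_2$ or to join the two points by a chain of balls inside the subcell.
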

\begin{proof} As the proof of \eqref{eq:est_psi1} involves the same steps as the proof of \eqref{eq:est_psi2} but with less details, we will only focus on the latter, and refer to \cite[Theorem 3.2]{CriFonGan_fixed_Sup} for the proof of the former.\\
    For $x \in \Omega, y_1 \in Q_1$, we define
    $$ (\mathcal{U}_2 u_n)_{Q_2} (x, y_1) \coloneqq \int_{Q_2} \mathcal{U}_2 u_n (x, y_1, y_2) \dd y_2. $$
    Using the triangle inequality, we rewrite
    \begin{align}
        \| \mathcal{U}_2 u_n - \mathcal{U}_1 u_n \|^2_{L^2} &= \| \mathcal{U}_2 u_n - (\mathcal{U}_2 u_n)_{Q_2} + (\mathcal{U}_2 u_n)_{Q_2} - \mathcal{U}_1 u_n \|^2_{L^2 (\Omega; L^2 (Q_1 \times Q_2; \R^M))} \nonumber \\
        &\leq 2 \| \mathcal{U}_2 u_n - (\mathcal{U}_2 u_n)_{Q_2} \|^2_{L^2 (\Omega; L^2 (Q_1 \times Q_2; \R^M))} \\
        &\qquad \qquad\qquad\qquad + 2 \| (\mathcal{U}_2 u_n)_{Q_2} - \mathcal{U}_1 u_n \|^2_{L^2 (\Omega; L^2 (Q_1 \times Q_2; \R^M))}. \label{eq:est_triangle_psi2}
    \end{align}
    We first estimate the first term on the right-hand side of \eqref{eq:est_triangle_psi2}. Let us fix $x \in \Omega$ and $y_1 \in Q_1$. Using the Poincar\'e-Wirtinger inequality on $Q_2$ we can write
    \begin{equation*}
        \int_{Q_2} | \mathcal{U}_2 u_n - (\mathcal{U}_2 u_n)_{Q_2} |^2 \dd y_2 \leq C \int_{Q_2} | \nabla_{y_2} \mathcal{U}_2 u_n |^2 \dd y_2.
    \end{equation*}
    Integrating over $\Omega\times Q_1$ and using \eqref{eq:chain_rule_psi2} we get
    \begin{equation}
        \| \mathcal{U}_2 u_n - ( \mathcal{U}_2 u_n)_{Q_2} \|^2_{L^2 (\Omega; L^2 (Q_1 \times Q_2; \R^M))} \leq C \| \nabla_{y_2} \mathcal{U}_2 u_n \|^2_{L^2 (\Omega; L^2 (Q_1 \times Q_2; \R^{N \times M}))} \leq C \frac{\eta_n^2}{\varepsilon_n}. \label{eq:plsreally}
    \end{equation}
    
    We now estimate the second term on the right-hand side of \eqref{eq:est_triangle_psi2}. We get
    \begin{align}
        &\int_{\Omega} \int_{Q_1} | \mathcal{U}_1 u_n - (\mathcal{U}_2 u_n)_{Q_2} |^2 \dd y_1 \dd x \nonumber \\ 
        &\qquad = \int_{\widehat{\Omega}_{\delta}} \int_{\widehat{Q}_{1,\eta}} | \mathcal{U}_1 u_n - (\mathcal{U}_2 u_n)_{Q_2} |^2 \dd y_1 \dd x + \int_{(\Omega \times Q_1) \setminus (\widehat{\Omega}_{\delta} \times \widehat{Q}_{1,\eta})} | \mathcal{U}_1 u_n - a |^2 \dd y_1 \dd x. \label{eq:pls_stop}
    \end{align}
    We estimate now the second term of the right-hand of \eqref{eq:pls_stop}. We have 
    $$ (\Omega \times Q_1) \setminus (\widehat{\Omega}_{\delta} \times \widehat{Q}_{1,\eta}) = (\Lambda_\delta \times Q_1) \cup (\widehat{\Omega}_\delta \times \Lambda_{1,\eta}). $$
    Since $\mathcal{U}_1 u_n = a$ on $\Lambda_\delta \times Q_1$, we get
    \begin{equation}
        \int_{(\Omega \times Q_1) \setminus (\widehat{\Omega}_{\delta} \times \widehat{Q}_{1,\eta})} | \mathcal{U}_1 u_n - a |^2 \dd y_1 \dd x = \int_{\widehat{\Omega}_\delta} \int_{\Lambda_{1,\eta}} | \mathcal{U}_1 u_n - a |^2 \dd y_1 \dd x \leq C \frac{\eta_n}{\delta_n}, \label{eq:unf_boundary}
    \end{equation}
    where the last step follows from the fact that $u_n$, and therefore also $\mathcal{U}_1 u_n$, is uniformly bounded in $L^\infty$, and from $|\Lambda_{1,\eta}| \leq C\frac{\eta_n}{\delta_n}$.\\
    We now estimate the first term of the right-hand side of \eqref{eq:pls_stop}, and for this we need to use the partial unfolding $\mathcal{U}_{2,\eta}$. By definition of $\mathcal{U}_2$ we have that 
    \begin{equation}\label{eq:partial_unfolding_prop}
        \mathcal{U}_{2,\eta} \mathcal{U}_1 u_n = \mathcal{U}_2 u_n.
    \end{equation}
    We now prove that 
    \begin{equation}\label{eq:partial_unfolding_average}
        \mathcal{U}_{2,\eta} (\mathcal{U}_2 u_n )_{Q_2} = (\mathcal{U}_2 u_n )_{Q_2}.
    \end{equation}
    Showing this is a matter of computations:
    \begin{align*}
        \mathcal{U}_{2,\eta} (\mathcal{U}_2 u_n )_{Q_2} (x,y_1,y_2) &= \mathcal{U}_{2,\eta} \int_{Q_2} \mathcal{U}_2 u_n (x, y_1, y_2) \dd y_2 \\
        &= \int_{Q_2} \!\mathcal{U}_2 u_n \! \left( \! x, \frac{\eta}{\delta} \left\lfloor \frac{\delta y_1}{\eta} \right\rfloor_{Q_2} \! + \frac{\eta}{\delta} y_2, y_2 \! \right)\! \dd y_2 \\
        &= \int_{Q_2} \! u_n \! \left( \! \delta \left\lfloor \frac{x}{\delta} \right\rfloor_{Q_1}\! + \eta \left\lfloor \frac{\delta}{\eta} \left( \frac{\eta}{\delta} \left\lfloor \frac{\delta y_2}{\eta} \right\rfloor_{Q_2} \! + \frac{\eta}{\delta} y_2 \right) \right\rfloor_{Q_2} \! + \eta y_2 \! \right) \! \dd y_2 \\
        &= \int_{Q_2} \! u_n \! \left( \! \delta \left\lfloor \frac{x}{\delta} \right\rfloor_{Q_1} \! + \eta \left\lfloor \left\lfloor \frac{\delta y_2}{\eta} \right\rfloor_{Q_2} \! + y_2 \right\rfloor_{Q_2} + \eta y_2 \! \right) \! \dd y_2 \\
        &= \int_{Q_2} \! u_n \! \left( \! \delta \left\lfloor \frac{x}{\delta} \right\rfloor_{Q_1} \! + \eta \left\lfloor \frac{\delta y_2}{\eta} \right\rfloor_{Q_2} \! + \eta y_2 \! \right) \! \dd y_2 \\
        &= \int_{Q_2} \mathcal{U}_2 u_n (x,y_1,y_2) \dd y_2 = (\mathcal{U}_2 u_n)_{Q_2} (x,y_1)
    \end{align*}
    Therefore we now have:
    \begin{align*}
        \int_{\widehat{\Omega}_\delta} \int_{\widehat{Q}_{1,\eta}} &\left| \mathcal{U}_1 u_n - (\mathcal{U}_2 u_n)_{Q_2} \right|^2 (x,y_1) \dd y_1 \dd x \\
        &= \int_{\widehat{\Omega}_\delta} \sum_{\xi_2 \in \Xi_2} \int_{\frac{\eta}{\delta} \xi_2 + \frac{\eta}{\delta} Q_2} \left| \mathcal{U}_1 u_n - (\mathcal{U}_2 u_n)_{Q_2} \right|^2 (x,y_1) \dd y_1 \dd x \\
        &= \int_{\widehat{\Omega}_\delta} \sum_{\xi_2 \in \Xi_2} \left( \frac{\eta}{\delta} \right)^N \int_{Q_2} \left| \mathcal{U}_1 u_n - (\mathcal{U}_2 u_n)_{Q_2} \right|^2 \left( x, \frac{\eta}{\delta} \xi_2 + \frac{\eta}{\delta} y_2 \right) \dd y_2 \dd x \\
        &= \int_{\widehat{\Omega}_\delta} \sum_{\xi_2 \in \Xi_2}\int_{\frac{\eta}{\delta} \xi_2 + \frac{\eta}{\delta} Q_2} \int_{Q_2} \left| \mathcal{U}_1 u_n - (\mathcal{U}_2 u_n)_{Q_2} \right|^2 \left( x, \frac{\eta}{\delta} \xi_2 + \frac{\eta}{\delta} y_2 \right) \dd y_2 \dd y_1 \dd x \\
        &= \int_{\widehat{\Omega}_\delta} \int_{\widehat{Q}_{1,\eta}} \int_{Q_2} \left| \mathcal{U}_1 u_n - (\mathcal{U}_2 u_n)_{Q_2} \right|^2 \left( x, \frac{\eta}{\delta} \left[ \frac{\delta y_1}{\eta} \right]_{Q_2} + \frac{\eta}{\delta} y_2 \right) \dd y_2 \dd y_1 \dd x \\
        &= \int_{\widehat{\Omega}_\delta} \int_{\widehat{Q}_{1,\eta}} \int_{Q_2} \mathcal{U}_{2,\eta} \left| \mathcal{U}_1 u_n - (\mathcal{U}_2 u_n)_{Q_2} \right|^2 \dd y_2 \dd y_1 \dd x.
    \end{align*}
    At this point we can use the linearity of the unfolding operator, together with \eqref{eq:partial_unfolding_prop}, \eqref{eq:partial_unfolding_average}, and property (vii) of Proposition \ref{properties_unfolding} with $F(x) = |x|^2$, to see that on $\widehat{Q}_{1,\eta} \times Q_2$, where $\mathcal{U}_{2,\eta}$ is properly defined, we have 
    $$ \mathcal{U}_{2,\eta} \left| \mathcal{U}_1 u_n - (\mathcal{U}_2 u_n)_{Q_2} \right|^2 = \left| \mathcal{U}_{2,\eta} \mathcal{U}_1 u_n - \mathcal{U}_{2,\eta} (\mathcal{U}_2 u_n)_{Q_2} \right|^2 = \left| \mathcal{U}_2 u_n - (\mathcal{U}_2 u_n)_{Q_2} \right|^2. $$
    Substituting back we get
    \begin{align*}
        \int_{\widehat{\Omega}_\delta} \int_{\widehat{Q}_{1,\eta}} \int_{Q_2} \mathcal{U}_{2,\eta} \left| \mathcal{U}_1 u_n - (\mathcal{U}_2 u_n)_{Q_2} \right|^2 \dd y_2 \dd y_1 \dd x &= \int_{\widehat{\Omega}_\delta} \int_{\widehat{Q}_{1,\eta}} \int_{Q_2} \left| \mathcal{U}_2 u_n - (\mathcal{U}_2 u_n)_{Q_2} \right|^2 \dd y_2 \dd y_1 \dd x \\
        &= \int_{\Omega} \int_{Q_1} \int_{Q_2} \left| \mathcal{U}_2 u_n - (\mathcal{U}_2 u_n)_{Q_2} \right|^2 \dd y_2 \dd y_1 \dd x \\
        &= \left\| \mathcal{U}_2 u_n - (\mathcal{U}_2 u_n)_{Q_2} \right\|^2_{L^2 (\Omega; L^2 (Q_1 \times Q_2; \R^M))},
    \end{align*}
    where again we used the fact that $\mathcal{U}_2 u_n \equiv a$ on $(\Omega \times Q_1) \setminus (\widehat{\Omega}_\delta \times \widehat{Q}_{1,\eta} ) \times Q_2$. To sum up, we proved that
    \begin{equation}
        \int_{\widehat{\Omega}_{\delta} \times \widehat{Q}_{1,\eta}} | \mathcal{U}_1 u_n - (\mathcal{U}_2 u_n)_{Q_2} |^2 \dd y_1 \dd x = \| \mathcal{U}_2 u_n - (\mathcal{U}_2 u_n)_{Q_2} \|^2_{L^2 (\Omega; L^2 (Q_1; L^2(Q_2; \R^M)))}, \label{eq:dontknow}
    \end{equation}
    where in the $L^2$ norm we put back the boundary sets, as they add zero contribution to the integral.
    
    Using \eqref{eq:est_triangle_psi2}, \eqref{eq:unf_boundary}, \eqref{eq:dontknow} and \eqref{eq:plsreally}, we get
    \begin{equation*}
        \| \mathcal{U}_2 u_n - \mathcal{U}_1 u_n \|^2_{L^2 (\Omega; L^2 (Q_1 \times Q_2; \R^M))} \leq c \left( \cfrac{\eta_n^2}{\varepsilon_n} + \cfrac{\eta_n}{\delta_n} \right) \leq c \cfrac{\eta_n}{\delta_n},
    \end{equation*}
    where the last inequality comes from $\eta_n \ll \delta_n \ll \varepsilon_n$.
\end{proof}

\subsection{Definition and properties of the auxiliary cell problem}

We first need to define an auxiliary cell problem that will be needed in the proof.
        {\definition{(\textbf{Auxiliary cell problem})}\label{def:auxiliary}
            Define the function $W^{\xi}_n \colon \R^M \to [0, +\infty)$ as
            \begin{equation*}
                W^{\xi}_n (z) \coloneqq \inf_{\psi_1 \in \mathcal{A}_1^{\xi}} \inf_{\psi_2 \in \mathcal{A}_2^{\xi}} \int_{Q_1} \int_{Q_2} W \left( \frac{\eta_n}{\delta_n}\! \left[\! \frac{\delta_n y_1}{\eta_n}\! \right]_{Q_2} \!+ \!\frac{\eta_n}{\delta_n} y_2, y_2, z + \psi_1 (y_1) + \psi_2 (y_1, y_2) \right) \dd y_2 \dd y_1,
            \end{equation*}
            where the admissible classes $\mathcal{A}_1^{\xi}$ and $\mathcal{A}_2^{\xi}$ are given by:
            \begin{align*}
                \mathcal{A}_1^{\xi} &\coloneqq \left\{ \psi_1 \in W^{1,2} (Q_1; \R^M) : \| \psi_1 \|_{L^2} \leq \xi, \| \nabla_{y_1} \psi_1 \|_{L^2} \leq 1, \| \psi_1 \|_{L^\infty} \leq M \right\}, \\
                \mathcal{A}_2^{\xi} &\coloneqq \left\{ \psi_2 \in L^2 (Q_1; W^{1,2} (Q_2; \R^M)) : \| \psi_2 \|_{L^2} \leq \xi, \| \nabla_{y_2} \psi_2 (y_1, \cdot) \|_{L^2} \leq 1, \| \psi_2 \|_{L^\infty} \leq M \right\}.
            \end{align*}}

    During the proof of the next Theorem, for shortness of notation we will define $g_n \colon Q_1 \times Q_2 \to Q_1$ as
    $$ g_n (y_1, y_2) \coloneqq \frac{\eta_n}{\delta_n}\! \left[\! \frac{\delta_n y_1}{\eta_n}\! \right]_{Q_2} \!+ \!\frac{\eta_n}{\delta_n} y_2. $$
            
            {\theorem{(\textbf{Properties of $W^{\xi}_n$})}\label{thm:prop_W_xi}
                The following hold:
                \begin{enumerate}
                    \item For every $z \in \R^M$, the infimum problem defining $W^{\xi}_n(z)$ is well-defined, i.e. admits a minimizer;
                    \item $W^{\xi}_n$ is continuous;
                    \item For any $n \in \N$ we have
                    \[
                    W^\xi_n(z) = 0 \implies z \in \overline{B(a, \sqrt{8}\xi)} \cup \overline{B (b,\sqrt{8}\xi)};
                    \]
                    \item For each $z \in \R^M$, $W^\xi_n(z)$ converges increasingly to
                    \begin{equation}\label{eq:homogenized_delta_eta}
                    	W^\mathrm{h}_n (z) \coloneqq \dashint_{Q_1} \dashint_{Q_2} W \left( \frac{\eta_n}{\delta_n}\! \left[\! \frac{\delta_n y_1}{\eta_n}\! \right]_{Q_2} \!+ \!\frac{\eta_n}{\delta_n} y_2, y_2, z \right) \dd y_1 \dd y_2
                    \end{equation}
                    as $\xi \to 0$. Moreover, $W^{\xi}_n$ converges uniformly to $ W^\mathrm{h}_n$ on every compact set.
                    \item For every $\xi > 0$ and for every $z \in \R^M$, we have $\lim_{n \to \infty} W_n^\xi (z) = W^\xi (z)$, where $W^\xi$ is defined by
                    \begin{equation}\label{eq:auxiliary_clean}
                    	W^\xi (z) \coloneqq \inf_{\psi_1 \in \mathcal{A}_1^{\xi}} \inf_{\psi_2 \in \mathcal{A}_2^{\xi}} \int_{Q_1} \int_{Q_2} W \left( y_1, y_2, z + \psi_1 (y_1) + \psi_2 (y_1, y_2) \right) \dd y_2 \dd y_1,
                    \end{equation}
                 		and $\mathcal{A}_1^\xi, \mathcal{A}_2^\xi$ are defined like before. Moreover, $W^\xi$ satisfies properties (1)--(4) of this Theorem.
                \end{enumerate}}
                
            \proof{} \textbf{Step 1: Well-posedness.}
            Fix $z \in \R^M$, and let $(\psi_1^m)_m \subset \mathcal{A}_1^\xi$ and $(\psi_2^m)_m \subset \mathcal{A}_2^\xi$ two sequences dependent on $z$, such that
            $$ \int_{Q_1} \int_{Q_2} W \left( g_n(y_1,y_2), y_2, z + \psi_1^m (y_1) + \psi_2^m (y_1, y_2) \right) \dd y_2 \dd y_1 \to W^\xi_n (z) \qquad m \to \infty. $$
            By definition of $\mathcal{A}_1^\xi$ and $\mathcal{A}_2^\xi$ we know that $(\psi_1^m)_m$ and $(\psi_2^m(y_1,\cdot))_m$ are bounded in $L^2$ for a.e. $y_1 \in Q_1$, therefore up to a subsequence they have weak limits, respectively $\psi_1^0 \in L^2 (Q_1; \R^M)$ and $\psi_2^0 \in L^2 (Q_1; L^2 (Q_2; \R^M))$.
            The sequences are also bounded in $W^{1,2}$, therefore we also have $\psi_1^0 \in W^{1,2}(Q_1; \R^M)$ and $\psi_2^0 (y_1, \cdot) \in W^{1,2}(Q_2; \R^M)$ for a.e. $y_1 \in Q_1$.

            In order to prove that $\psi_1^0 \in \mathcal{A}_1^\xi$ and $\psi_2^0 \in \mathcal{A}_2^\xi$, we need to prove that the bounds on the $L^2$ and $W^{1,2}$ norm still hold.
            We claim that due to the uniform bound on the gradients, we have that $\psi_1^n \rightarrow \psi_1^0$ strongly in $L^2(Q_1; \R^M)$, and that $\psi_2^n (y_1, \cdot) \rightarrow \psi_2^0 (y_1, \cdot)$ strongly in $L^2 (Q_2; \R^M)$ for a.e. $y_1 \in Q_1$, thanks to Rellich-Kondrakov theorem. This also implies
            $$ \| \psi_1^0 \|_{L^\infty} \leq M, \qquad \| \psi_2^0 \|_{L^\infty} \leq M. $$ 
            Thanks to the weak lower semi-continuity of the norms, we then get
            \begin{gather*}
                \begin{cases}
                    \displaystyle \| \psi_1^0 \|_{L^2 (Q_1; \R^M)} \leq \liminf_{n \to \infty} \| \psi_1^n \|_{L^2 (Q_1; \R^M)} \leq \xi, \\
                    \displaystyle\| \nabla_{y_1} \psi_1^0 \|_{L^2 (Q_1; \R^{N \times M})} \leq \liminf_{n \to \infty} \| \nabla_{y_1} \psi_1^n \|_{L^2 (Q_1; \R^{N \times M})} \leq 1,
                \end{cases} 
            \end{gather*}
            which proves that $\psi_1^0 \in \mathcal{A}_1^\xi$. In a similar way we get
            \begin{gather*}
                \begin{cases}
                    \displaystyle\| \psi_2^0 \|_{L^2 ( Q_1; L^2(Q_2; \R^M))} \leq \liminf_{n \to \infty} \| \psi_2^n \|_{_{L^2 ( Q_1; L^2(Q_2; \R^M))}} \leq \xi, \\
                    \displaystyle\| \nabla_{y_2} \psi_2^0 (y_1, \cdot) \|_{L^2 (Q_2; \R^{N \times M})} \leq \liminf_{n \to \infty} \| \nabla_{y_2} \psi_2^n (y_1, \cdot) \|_{L^2 (Q_2; \R^{N \times M})} \leq 1,
                \end{cases}
            \end{gather*}
            which proves that $\psi_2^0 \in \mathcal{A}_2^\xi$.

            Since $\psi_1, \psi_2$ are bounded, and $g_n(y_1,y_2)$ is also bounded, we know that $W$ is uniformly continuous on $\overline {B(C,0,2M)}$ and also that $W$ is bounded. This allows us to use the Dominated Convergence Theorem to conclude:
            \begin{align*}
                \lim_{m \to \infty} \int_{Q_1} \int_{Q_2} W &\left( g_n(y_1,y_2), y_2, z + \psi_1^m (y_1) + \psi_2^m (y_1, y_2) \right) \dd y_1 \dd y_2 \\
                = &\int_{Q_1} \int_{Q_2} W \left( g_n(y_1,y_2), y_2, z + \psi_1^0 (y_1) + \psi_2^0 (y_1, y_2) \right) \dd y_1 \dd y_2.
            \end{align*}
            \vspace{0.3cm}
            
            \textbf{Step 2: Continuity of $W^\xi_n$.} Take a sequence $(z_m)_m \subset \R^M$ such that $z_m \to z_0 \in \R^M$. From Step 1 we know that for every $z_m$ there exists minimizing functions $\psi_1^m \in \mathcal{A}_1^\xi$ and $\psi_2^m \in \mathcal{A}_2^\xi$ such that
            $$ W^\xi_n (z_m) = \int_{Q_1} \int_{Q_2} W \left( g_n(y_1,y_2), y_2, z_m + \psi_1^m (y_1) + \psi_2^m (y_1, y_2) \right) \dd y_1 \dd y_2. $$
            Same holds for $z_0$, namely there exists $\psi_1^0 \in \mathcal{A}_1^\xi$ and $\psi_2^0 \in \mathcal{A}_2^\xi$ such that
            $$ W^\xi_n (z_0) = \int_{Q_1} \int_{Q_2} W \left( g_n(y_1,y_2), y_2, z_0 + \psi_1^0 (y_1) + \psi_2^0 (y_1, y_2) \right) \dd y_1 \dd y_2. $$
            Using $\psi_1^m$ and $\psi_2^m$ as competitors in the problem defining $W^\xi_n (z_0)$, we get
            \begin{equation}
                W^\xi_n (z_0) \leq \int_{Q_1} \int_{Q_2} W \left( g_n(y_1,y_2), y_2, z_0 + \psi_1^m + \psi_2^m \right) \dd y_1 \dd y_2. \label{eq:cont_auxiliary_1}
            \end{equation}
            We now want to write $z_m$ instead of $z_0$ in the argument of $W$. We then write
            \begin{align}
                W &\left( g_n(y_1,y_2), y_2, z_0 + \psi_1^m + \psi_2^m \right) \nonumber \\
                \leq &W \left( g_n(y_1,y_2), y_2, z_m + \psi_1^m + \psi_2^m \right) \\
                &+ \left| W \left( g_n(y_1,y_2), y_2, z_0 + \psi_1^m + \psi_2^m \right) - W\left(g_n(y_1,y_2), y_2, z_m + \psi_1^m + \psi_2^m \right) \right|. \label{eq:cont_auxiliary_2}
            \end{align}
            Since the competitors are uniformly bounded, we have 
            $$ \lim_{m \to \infty} \left| W\left(g_n(y_1,y_2), y_2, z_0 + \psi_1^m + \psi_2^m\right) - W\left(g_n(y_1,y_2), y_2, z_m + \psi_1^m + \psi_2^m\right) \right| = 0, $$
            for all $y_1 \in Q_1$ and $y_2 \in Q_2$. Then, using the Dominated Convergence Theorem, we can conclude that
            \begin{equation}
                \lim_{m \to \infty} \int_{Q_1} \int_{Q_2} \left| W(g_n(y_1,y_2), y_2, z_0 + \psi_1^m + \psi_2^m) - W(g_n(y_1,y_2), y_2, z_m + \psi_1^m + \psi_2^m) \right| \dd y_1 \dd y_2 = 0. \label{eq:cont_auxiliary_3}
            \end{equation}
            Therefore, from \eqref{eq:cont_auxiliary_1}, \eqref{eq:cont_auxiliary_2} and \eqref{eq:cont_auxiliary_3} we obtain that
            \begin{equation}
                W^\xi_n (z_0) \leq \liminf_{m \to \infty} \int_{Q_1} \int_{Q_2} W(g_n(y_1,y_2), y_2, z_m + \psi_1^m + \psi_2^m) \dd y_1 \dd y_2 = \liminf_{m \to \infty} W^\xi_n (z_m). \label{eq:cont_oneside}
            \end{equation}
            
            For the other inequality we use $\psi_1^0$ and $\psi_2^0$ as competitors in the problem defining $W^\xi_n (z_m)$. This yields
            \begin{equation}
                W^\xi_n (z_m) \leq \int_{Q_1} \int_{Q_2} W(g_n(y_1,y_2), y_2, z_m + \psi_1^0 + \psi_2^0) \dd y_1 \dd y_2. \label{eq:cont_auxiliary_4}
            \end{equation}
            We now want to write $z_0$ instead of $z_m$ in the argument of $W$. We then write
            \begin{align}
                W(g_n(y_1,y_2), y_2&, z_m + \psi_1^0 + \psi_2^0) \nonumber \\
                &\leq W(g_n(y_1,y_2), y_2, z_0 + \psi_1^0 + \psi_2^0) \nonumber \\
                &+ \left| W(g_n(y_1,y_2), y_2, z_m + \psi_1^0 + \psi_2^0) - W(g_n(y_1,y_2), y_2, z_0 + \psi_1^0 + \psi_2^0) \right| \label{eq:cont_auxiliary_5}
            \end{align}
            Since the competitors are uniformly bounded, we have 
            $$ \lim_{m \to \infty} \left| W(g_n(y_1,y_2), y_2, z_m + \psi_1^0 + \psi_2^0) - W(g_n(y_1,y_2), y_2, z_0 + \psi_1^0 + \psi_2^0) \right| = 0, $$
            for all $y_1 \in Q_1$ and $y_2 \in Q_2$. Then, using the Dominated Convergence Theorem, we can conclude that
            \begin{equation}
                \lim_{m \to \infty} \int_{Q_1} \int_{Q_2} \left| W(g_n(y_1,y_2), y_2, z_0 + \psi_1^m + \psi_2^m) - W(g_n(y_1,y_2), y_2, z_m + \psi_1^m + \psi_2^m) \right| \dd y_1 \dd y_2 = 0. \label{eq:cont_auxiliary_6}
            \end{equation}
            Therefore, \eqref{eq:cont_auxiliary_4}, \eqref{eq:cont_auxiliary_5} and \eqref{eq:cont_auxiliary_6} give us that
            \begin{equation}
                \limsup_{m \to \infty} W^\xi_n (z_m) \leq \int_{Q_1} \int_{Q_2} W(g_n(y_1,y_2), y_2, z_0 + \psi_1^0 + \psi_2^0) \dd y_1 \dd y_2 = W^\xi_n (z_0).\label{eq:cont_otherside}
            \end{equation}
            Thus, from \eqref{eq:cont_oneside} and \eqref{eq:cont_otherside} we conclude that $$ \lim_{m \to \infty} W^\xi_n (z_m) = W^\xi_n (z_0), $$
            as desired.\\
            
            \textbf{Step 3: Double-well behavior.} 
            Let now $z \in \R^M$ be such that $W^\xi_n(z) = 0$. Using Step 1, we know that there exist minimizing functions $\psi_1 \in \mathcal{A}_1^\xi$ and $\psi_2 \in \mathcal{A}_2^\xi$ such that
            $$ \int_{Q_1} \int_{Q_2} W(g_n(y_1,y_2), y_2, z + \psi_1(y_1; z) + \psi_2 (y_1, y_2; z) ) \dd y_2 \dd y_1 = 0. $$
            As $W(g_n(y_1,y_2),y_2,p) \geq 0$, this implies
            $$ W(g_n(y_1,y_2), y_2, z + \psi_1(y_1; z) + \psi_2 (y_1, y_2; z) ) = 0 \qquad \text{for a.e. } y_1 \in Q_1, y_2 \in Q_2. $$
            Using \ref{itm:1_H2} we now know that this implies
            $$ z + \psi_1(y_1; z) + \psi_2 (y_1, y_2; z) \in \{a, b\} \qquad \text{for a.e. } y_1 \in Q_1, y_2 \in Q_2. $$
            We define $\psi(y_1,y_2) \coloneqq \psi_1(y_1) + \psi_2(y_1,y_2)$. From the definitions of $\mathcal{A}_1^\xi$ and $\mathcal{A}_2^\xi$ we know that
            $$ \| \psi_1 \|_{L^2(Q_1; \R^M)} \leq \xi, \qquad \| \psi_2 \|_{L^2(Q_1 \times Q_2; \R^M)} \leq \xi, $$
            which, using the triangle inequality and the fact that $|Q_2|=1$, implies
            $$ \| \psi \|_{L^2(Q_1 \times Q_2; \R^M)} \leq \| \psi_1 \|_{L^2(Q_1; \R^M)} + \| \psi_2 \|_{L^2(Q_1 \times Q_2; \R^M)} \leq 2\xi. $$
            We now define two subsets of $Q_1 \times Q_2$, given by
            \begin{align*}
            	S_a &\coloneqq \{ (y_1,y_2) \in Q_1 \times Q_2 : z + \psi(y_1,y_2) = a \}; \\
            	S_b &\coloneqq \{ (y_1,y_2) \in Q_1 \times Q_2 : z + \psi(y_1,y_2) = b \}.
            \end{align*}
            Since $z + \psi(y_1,y_2) \in \{a,b\}$ for a.e. $y_1 \in Q_1, y_2 \in Q_2$, we have 
            $$ | S_a | + |S_b| = | Q_1 \times Q_2 | = 1. $$
            This implies, in particular, that up to a set of $\mathcal{L}^{2N}$-measure $0$, we have
            \begin{equation*}
            	\psi(y_1,y_2) = \begin{cases}
            		a - z \qquad &(y_1,y_2) \in S_a; \\
            		b - z \qquad &(y_1,y_2) \in S_b.
            	\end{cases}
            \end{equation*}
            We now want to estimate $|z-a|$ and $|z-b|$. From the $L^2$ bound on $\psi$ we have
            $$ \int_{S_a} | z - a |^2 \, \mathrm{d}y_2 \, \mathrm{d}y_1 + \int_{S_b} | z - b |^2 \, \mathrm{d}y_2 \, \mathrm{d}y_1 = \int_{Q_1 \times Q_2} | \psi(y_1,y_2)| \, \mathrm{d}y_2 \, \mathrm{d}y_1 \leq 4 \xi^2, $$
            which, by denoting $\lambda = |S_a| \in [0,1]$, implies
            \begin{equation}\label{eq:lambda_wells}
            	\lambda |z - a|^2 + (1-\lambda) |z-b|^2 \leq 4 \xi^2.
            \end{equation}
            We now claim that $z \in \overline{B}(a,r(\xi)) \cup \overline{B}(b,r(\xi))$, where $r(\xi) = \sqrt{8} \xi$. Let us assume by contradiction that $z \notin \overline{B}(a,r(\xi)) \cup \overline{B}(b,r(\xi))$, which implies
            $$ |z-a| > r(\xi), \qquad |z-b| > r(\xi).$$
            Now, if $\lambda \geq \frac12$ in \eqref{eq:lambda_wells}, we have
            $$ \frac{1}{2} |z-a|^2 \leq \lambda |z-a|^2 \leq \lambda |z-a|^2 + (1-\lambda) |z-b|^2 \leq 4 \xi^2, $$
            which implies $|z - a| \leq \sqrt{8}\xi$, which gives us a contradiction whenever $r(\xi) \geq \sqrt{8} \xi$, in particular also if $r(\xi) = \sqrt{8} \xi$.
            
            If $\lambda < \frac12$ in \eqref{eq:lambda_wells}, we have
            $$ \frac{1}{2} |z-b|^2 \leq (1-\lambda) |z-b|^2 \leq \lambda |z-a|^2 + (1-\lambda) |z-b|^2 \leq 4 \xi^2, $$
            which implies $|z - b| \leq \sqrt{8}\xi$, which gives us a contradiction whenever $r(\xi) \geq \sqrt{8} \xi$, in particular also if $r(\xi) = \sqrt{8} \xi$.
            
            Therefore, we conclude that $z \in \overline{B}(a,r(\xi)) \cup \overline{B}(b,r(\xi))$ with $r(\xi) = \sqrt{8}\xi$.\\
            
            \textbf{Step 4: Convergence of $W^\xi_n$ as $\xi \to 0$.} Fix $z \in \R^M$, and let $\xi_1, \xi_2 \in \R$ such that $\xi_1 \leq \xi_2$. it is possible to observe that 
            $$ \mathcal{A}_1^{\xi_1} \subseteq \mathcal{A}_1^{\xi_2}, \qquad \mathcal{A}_2^{\xi_1} \subseteq \mathcal{A}_2^{\xi_2}, $$
            which implies
            $$ W^{\xi_2}_n (z) \leq W^{\xi_1}_n (z). $$ 
            Therefore $W^\xi_n (z)$ is non-increasing in $\xi$, and since $W^\xi_n (z) \geq 0$, this in turn implies that the limit exists and is finite. Let us now take a sequence $(\xi_m)_m \subset \R$ such that $\xi_m \to 0$ as $m \to \infty$. To each $\xi_m$ associate the respective problem $W^{\xi_m}_n (z)$, which, by Step 1, will have two minimizing functions $\psi_1^m \in \mathcal{A}_1^{\xi_m}$ and $\psi_2^m \in \mathcal{A}_1^{\xi_m}$, such that
            $$ W^{\xi_m}_n (z) = \int_{Q_1} \int_{Q_2} W(g_n(y_1,y_2), y_2, z + \psi_1^m + \psi_2^m) \dd y_1 \dd y_2. $$
            We now want to study the behaviour of $W^{\xi_m}_n (z)$ as $\xi_m \to 0$. We know already that 
            $$ \| \psi_1^m \|_{L^2 (Q_1; \R^M)} \leq \xi, \qquad \| \psi_2^m \|_{L^2 (Q_1; L^2 (Q_2; \R^M))} \leq \xi. $$
            This therefore implies that 
            $$ \psi_1^m \to 0 \quad \text{in } L^2 (Q_1; \R^M), \qquad \psi_2^m \to 0 \quad \text{in } L^2 (Q_1; L^2 (Q_2; \R^M)). $$
            Then using the uniform bound on the competitors, the boundedness assumption on $W$, the boundedness of $g_n(y_1,y_2)$ and the Dominated Convergence Theorem, we get that
            $$ \lim_{m \to \infty} W^{\xi_m}_n(z) = W^\mathrm{h}_n (z), $$
            where $W^{\mathrm{h}}_n (z) $ is given by
            $$ W^\mathrm{h}_n (z) = \int_{Q_1} \int_{Q_2} W(g_n(y_1,y_2), y_2, z) \dd y_1 \dd y_2. $$
            Since the convergence is non-increasing, we can use Dini's Theorem to deduce that the convergence is uniform on compact sets.\\
            
            \textbf{Step 5: Convergence with respect to $\mathbf{n}$.} Fix $\xi > 0$ and $z \in \R^M$. From Step 1, we know that for every $n \in \N$ there exist minimizing functions $\psi_1^n \in \mathcal{A}_1^\xi$ and $\psi_2^n \in \mathcal{A}_2^\xi$ such that
            $$ W_n^\xi (z) = \int_{Q_1} \int_{Q_2} W(g_n(y_1, y_2), y_2, z + \psi_1^n + \psi_2^n) \dd y_2 \dd y_1. $$
            Retracing the proof of Step 1, it is possible to see that for $W^\xi$ defined in \eqref{eq:auxiliary_clean}, it also holds that there exist minimizing functions $\psi_1 \in \mathcal{A}_1^\xi$ and $\psi_2 \in \mathcal{A}_2^\xi$ such that
            $$ W^\xi (z) = \int_{Q_1} \int_{Q_2} W(y_1, y_2, z + \psi_1 + \psi_2) \dd y_2 \dd y_1. $$
            Since the admissible classes do not depend on $n$ and are essentially the same, we can use $\psi_1^n$ and $\psi_2^n$ as competitors in the problem defining $W^\xi(z)$, to get
            \begin{align}
            	W^\xi (z) &\leq \int_{Q_1} \int_{Q_2} W(y_1, y_2, z + \psi_1^n + \psi_2^n) \dd y_2 \dd y_1 \nonumber\\
            	&\leq \int_{Q_1} \int_{Q_2} W(g_n (y_1, y_2), y_2, z + \psi_1^n + \psi_2^n) \dd y_2 \dd y_1 \nonumber\\
            	&+ \int_{Q_1} \int_{Q_2} | W(y_1, y_2, z + \psi_1^n + \psi_2^n) - W(g_n (y_1, y_2), y_2, z + \psi_1^n + \psi_2^n) | \dd y_2 \dd y_1. \label{eq:auxiliary_continuity_infty}
            \end{align}
			Thanks to Remark \ref{remark:convergence}, since the competitors are uniformly bounded, we get
			$$ \lim_{n \to \infty} | W(y_1, y_2, z + \psi_1^n + \psi_2^n) - W(g_n (y_1, y_2), y_2, z + \psi_1^n + \psi_2^n) | = 0, $$
			For $\ln$-a.e. $y_1 \in Q_1, y_2 \in Q_2$. Then, like in Step 2, we can use the Dominated Convergence Theorem to conclude that
			$$ \lim_{n \to \infty} \int_{Q_1} \int_{Q_2} | W(y_1, y_2, z + \psi_1^n + \psi_2^n) - W(g_n (y_1, y_2), y_2, z + \psi_1^n + \psi_2^n) | \dd y_2 \dd y_1 = 0. $$
			Therefore, taking the limit for $n \to \infty$ and taking the infimum over the competitors in \eqref{eq:auxiliary_continuity_infty}, we get
			\begin{equation}\label{eq:auxiliary_continuity_liminf}
				W^\xi (z) \leq \liminf_{n \to \infty} W_n^\xi (z).
			\end{equation} 
			For the reverse inequality, we can use $\psi_1$ and $\psi_2$ as competitors in the problem defining $W_n^\xi(z)$. Following analogous steps, we get to
			\begin{equation}\label{eq:auxiliary_continuity_limsup}
				\limsup_{n \to \infty} W_n^\xi (z) \leq W^\xi (z).
			\end{equation}
			Thus, from \eqref{eq:auxiliary_continuity_liminf} and \eqref{eq:auxiliary_continuity_limsup} we can conclude.\\
			As the proofs of steps (1)--(4) work for a fixed $n$, and only require $g_n(y_1,y_2)$ to be bounded, the exact same steps work with $g_n(y_1,y_2)$ replaced by $y_1$.
            \qed

\subsection{Limit of the auxiliary surface tension.}\label{sec:geodesic}

The goal of this section is to prove that
\[
\lim_{\xi\to0} \sigma^\xi = \sigma^{\mathrm{h}},
\]
where
\begin{equation}\label{eq:sigma_xi}
\sigma^\xi \coloneqq \inf \left\{ \int_{-1}^1 2 \sqrt{W^\xi (\gamma(t))} | \gamma'(t)| \dd t : \gamma \in \mathrm{Lip}([-1,1]; \R^M), \gamma(-1) = a, \gamma(1) = b \right\}.
\end{equation}
First of all, we show that
\[
\sigma^\xi = \min \left\{ \int_{-1}^1 2 \sqrt{W^\xi (\gamma(t))} | \gamma'(t)| \dd t : \gamma \in \mathrm{Lip}_{\mathcal{Z}^\xi} ([-1,1]; \R^M), \gamma(-1) = a, \gamma(1) = b \right\},
\]
where $\mathrm{Lip}_\mathcal{Z^\xi} ([-1,1]; \R^M)$ is the space of continuous curves which are Lipschitz continuous with respect to the Euclidean metric on any compact portion of the curve that does not intersect
\[
\mathcal{Z}^\xi \coloneqq \overline{B(a, r(\xi))} \cup  \overline{B(b, r(\xi))}.
\]
Here, $r(\xi)>0$ is given by Proposition \ref{thm:prop_W_xi}.

\begin{remark}
Recall that
\begin{equation}\label{eq:r_xi}
\lim_{\xi\to0} r(\xi) =0.
\end{equation}
In particular, we can always assume that $\xi$ is small enough so that $\overline{B(a, r(\xi))}$ and $\overline{B(b, r(\xi))}$ are disjoint.
\end{remark}

The strategy of the proof is similar to that employed to prove \cite[Theorem 2.6]{zuniga2016heteroclinic}.
Therefore, we only highlight the main change.
Consider the functional $E^\xi: \mathrm{Lip}_\mathcal{Z} ([-1,1]; \R^M)\to [0,\infty)$ defined as
\[
E^\xi(\gamma) \coloneqq \int_{-1}^1 2 \sqrt{W^\xi(\gamma(t))} |\gamma'(t)| \dd t.
\]
for $p,q\in\R^M$, set
\[
\dd^\xi (p, q) \coloneqq \inf \left\{ E^\xi (\gamma) : \gamma \in \mathrm{Lip}_{\mathcal{Z}^\xi}([-1,1]; \R^M), \gamma(-1) = p, \gamma(1) = q \right\}.
\]

\begin{remark}
Note that the function $\dd:\R^M\times\R^M\to[0,\infty)$ defined above turns out to be a quasi-metric.
Indeed, if $p,q \in B(a,r(\xi))$ or $p,q \in B(b,r(\xi))$, then $\dd^\xi (p,q) = 0$ does not imply that $p = q$.
Nevertheless, it is possible to see that $\dd^\xi$ is a metric on $\R^M/\sim^\xi$, where $p \sim^\xi q$ if $p,q \in B(a,r(\xi))$ or $p,q \in B(b,r(\xi))$.
In this way we get that $(\R^M/\sim^\xi, \dd^\xi)$ is a length space.
\end{remark}

Let $\xi>0$, and fix $p,q \in \R^M$ such that $p \in \overline{B(a,r(\xi))}$ and $q \in \overline{B(b,r(\xi))}$.
Take a curve $\gamma \in \mathrm{Lip}_{\mathcal{Z}^\xi}([-1,1]; \R^M)$ such that $\gamma(-1)=p$ and $\gamma(1)=q$.
In light of the minimization problem defining $\dd^\xi(p,q)$, without loss of generality, we can assume that there exist $-1\leq t_a^\xi<t_b^\xi\leq 1$ defined by 
$$ t_a^\xi = \sup \left\{ t \in [-1,1] : \gamma(t) \in \overline{B(a,r(\xi))} \right\}, \quad t_b^\xi = \inf \left\{ t \in [-1,1] : \gamma(t) \in \overline{B(b,r(\xi))} \right\}, $$
such that
\[
\gamma(t)\in \overline{B(a, r(\xi))}\quad\text{ for all } t\in[-1,t_a^\xi],\quad\quad\quad
\gamma(t)\in \overline{B(b, r(\xi))}\quad\text{ for all } t\in[t_b^\xi, 1].
\]

We observe that, since $r(\xi) \to 0 $ as $\xi \to 0$, it holds
\[
\lim_{\xi \to 0} t_a^\xi \to -1,\quad\quad\quad
\lim_{\xi \to 0} t_b^\xi \to 1.
\]
Define $\widetilde{\gamma} \in  \mathrm{Lip}_{\mathcal{Z}^\xi}([-1,1]; \R^M)$ as
\begin{gather*}
    \widetilde{\gamma}(t) \coloneqq \begin{cases}
        p + \frac{3}{2} (\gamma(t_a^\xi) - p) (t+1) \quad &t \in \left[ -1, -\frac{1}{3} \right), \\
        \\
    \gamma \left( t_a^\xi + \frac{t_b^\xi - t_a^\xi}{2} (3t+1) \right) \quad &t \in \left[ -\frac{1}{3}, \frac{1}{3} \right], \\
    \\
    q - \frac{3}{2} (\gamma(t_b^\xi) - q) (t-1) \quad &t \in \left[ \frac{1}{3}, 1 \right).
    \end{cases}
\end{gather*}
Note that 
$$ \widetilde{\gamma}(-1) = p, \quad \widetilde{\gamma}(1) = q, \quad \widetilde{\gamma}\left( -\frac13 \right) = \gamma(t_a^\xi), \quad \widetilde{\gamma}\left( \frac13 \right) = \gamma(t_b^\xi), $$
and that
\begin{align*}
    E^\xi(\widetilde{\gamma}) &= \int_{-1}^1 2 \sqrt{W^\xi(\widetilde{\gamma}(t))} |\widetilde{\gamma}'(t)| \dd t 
    = \int_{-\frac{1}{3}}^{\frac{1}{3}} 2 \sqrt{W^\xi (\widetilde{\gamma}(t))} |\widetilde{\gamma}'(t)| \dd t \\
    &= \int_{-\frac{1}{3}}^{\frac{1}{3}} 2 \sqrt{W^\xi \left( \gamma \left( t_a^\xi + \frac{t_b^\xi - t_a^\xi}{2} (3t+1) \right) \right) } \left| \gamma' \left( t_a^\xi + \frac{t_b^\xi - t_a^\xi}{2} (3t+1) \right) \right| \frac{3}{2} \left| t_b^\xi - t_a^\xi \right| \dd t \\
    &= \int_{t_a^\xi}^{t_b^\xi} 2 \sqrt{W^\xi (\gamma(s)) } | \gamma'(s) | \dd s
    = \int_{-1}^1 2 \sqrt{W^\xi (\gamma(s))} | \gamma'(s) | \dd s
    = E^\xi(\gamma)
\end{align*}
Therefore, in the following, we will always assume that
\begin{equation}\label{eq:1/3}
\gamma(t)\in \overline{B(a, r(\xi))}\quad\text{ for all } t\in\left[-1,-\frac{1}{3}\right],\quad\quad\quad
\gamma(t)\in \overline{B(b, r(\xi))}\quad\text{ for all } t\in\left[\frac{1}{3}, 1\right].
\end{equation}
Define the length functional as
\[
L^\xi(\gamma) \coloneqq \sup_{(t_k)_k \subset \mathcal{P}} \sum_k \dd (\widetilde{\gamma}(t_k), \widetilde{\gamma}(t_{k+1})),
\]
where $\mathcal{P}$ is the set of finite partitions of $\left[ -\frac{1}{3},\frac{1}{3}\right]$. What this represents is the length of the part of the curve that lays outside of the wells.
With these definitions, the proof of \cite[Theorem 2.6]{zuniga2016heteroclinic} yields the following.

\begin{proposition} \hphantom{\\}
Fix $\xi>0$. Then,
for every points $p, q \in \R^M$ there exists a curve $\gamma_\xi \in\mathrm{Lip}_{\mathcal{Z}^\xi}([-1,1]; \R^M) $ such that
\[
\dd^\xi (p,q) = E^\xi(\gamma_\xi) = L^\xi(\gamma_\xi),
\]
where we assume that the curve is parametrized to satisfy \eqref{eq:1/3}.
\end{proposition}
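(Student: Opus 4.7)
The plan is to argue by the direct method in the calculus of variations in the setting of the quasi-metric $\dd^\xi$, and then to identify the energy of the limiting minimizer with the length of the curve in the corresponding length space.

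First, I would take a minimizing sequence $(\gamma_n)_n \subset \mathrm{Lip}_{\mathcal{Z}^\xi}([-1,1];\R^M)$ with $\gamma_n(-1)=p$, $\gamma_n(1)=q$, and $E(\gamma_n) \to \dd^\xi(p,q)$. Since $E$ is invariant under reparametrization, I would use the reduction made just before the statement (see \eqref{eq:1/3}) to arrange that on $[-1,-1/3]$ and $[1/3,1]$ each curve $\gamma_n$ sits in the wells $\overline{B(a,r(\xi))}$ and $\overline{B(b,r(\xi))}$ respectively, and then further reparametrize $\gamma_n$ on $[-1/3,1/3]$ to have constant Euclidean speed equal to $\frac{3}{2}\ell_n$, where $\ell_n$ is the Euclidean length of $\gamma_n$ on this middle interval. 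By the growth condition \ref{itm:1_H3} (inherited by $W^\xi$ via \ref{itm:1_H5}) and the fact that the zero set of $W^\xi$ is the bounded set $\mathcal{Z}^\xi$ by item (3) of Theorem \ref{thm:prop_W_xi}, one has a positive lower bound on $W^\xi$ outside any neighborhood of $\mathcal{Z}^\xi$ in a bounded region, together with growth of $W^\xi$ at infinity. This allows one to deduce that $(\ell_n)_n$ is uniformly bounded and that the curves $\gamma_n$ stay inside a fixed bounded set, hence $(\gamma_n)_n$ is equi-Lipschitz on $[-1/3,1/3]$. Arzelà--Ascoli then yields, up to a subsequence, $\gamma_n \to \gamma^*$ uniformly on $[-1,1]$ with $\gamma^* \in \mathrm{Lip}_{\mathcal{Z}^\xi}([-1,1];\R^M)$ and the right endpoint conditions.

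Next, I would establish lower semicontinuity of $E$: since $W^\xi$ is continuous by item (2) of Theorem \ref{thm:prop_W_xi}, one has $\sqrt{W^\xi(\gamma_n)} \to \sqrt{W^\xi(\gamma^*)}$ uniformly, while $\gamma_n' \wto (\gamma^*)'$ weakly-$*$ in $L^\infty$, yielding $E(\gamma^*) \leq \liminf_n E(\gamma_n) = \dd^\xi(p,q)$. The reverse inequality is immediate from admissibility, hence $E(\gamma^*) = \dd^\xi(p,q)$. For the identity $E(\gamma^*) = L(\gamma^*)$, the key observation is that every subarc of a minimizer is itself a minimizer: for every $-1/3 \leq s < t \leq 1/3$ one has $\dd^\xi(\gamma^*(s),\gamma^*(t)) = \int_s^t \sqrt{W^\xi(\gamma^*(\tau))}\,|(\gamma^*)'(\tau)|\,\mathrm{d}\tau$; summing over an arbitrary partition of $[-1/3,1/3]$ and passing to the supremum gives $L(\gamma^*)=E(\gamma^*)$, noting that contributions from $[-1,-1/3]$ and $[1/3,1]$ vanish since $\gamma^*$ remains in a single well on each of these intervals, so $\dd^\xi$-distances between consecutive partition points there are zero.

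The main obstacle will be the a priori bound on the Euclidean lengths $(\ell_n)_n$: since $\sqrt{W^\xi}$ vanishes on $\mathcal{Z}^\xi$, a curve can accumulate arbitrary Euclidean length inside the wells without paying in $E$. The reduction to parametrization \eqref{eq:1/3} is precisely what neutralizes this pathology, because it confines all nontrivial Euclidean travel to $[-1/3,1/3]$, where the lower bound on $W^\xi$ away from $\mathcal{Z}^\xi$ combined with its coercivity at infinity rules out both excessive excursions inside a bounded region and escape to infinity. A secondary technical point will be to verify that the limit $\gamma^*$ genuinely belongs to $\mathrm{Lip}_{\mathcal{Z}^\xi}([-1,1];\R^M)$, i.e. is Lipschitz on compact portions disjoint from $\mathcal{Z}^\xi$; this follows from the uniform Lipschitz estimates on the middle interval together with the uniform convergence $\gamma_n \to \gamma^*$.
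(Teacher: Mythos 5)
Your sketch follows the direct method, which is broadly the same philosophy as the argument the paper delegates to \cite[Theorem 2.6]{zuniga2016heteroclinic}, and the lower semicontinuity and subarc-minimality steps are fine. However, there is a genuine gap precisely at the point you flag as the main obstacle, the uniform bound on the Euclidean lengths $\ell_n$, and your proposed resolution does not close it.

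The problem is that the reduction to the parametrization \eqref{eq:1/3} does \emph{not} ``confine all nontrivial Euclidean travel'' to a region where $W^\xi$ has a positive lower bound. By construction $\gamma_n(\pm 1/3)\in\partial\mathcal{Z}^\xi$, where $W^\xi$ vanishes, so for $t$ near $\pm 1/3$ the curve is arbitrarily close to $\partial\mathcal{Z}^\xi$ and the weight $\sqrt{W^\xi(\gamma_n(t))}$ is arbitrarily small. A minimizing sequence can therefore accumulate unbounded Euclidean length by wandering inside thinner and thinner collars around $\partial B(a,r(\xi))$ or $\partial B(b,r(\xi))$ while paying a vanishing amount of $E$-energy (think of a sequence spiralling $k_n\to\infty$ times at distance $\epsilon_n\to 0$ from the boundary with $k_n\,\epsilon_n^{1/2}\to 0$). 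Your constant-Euclidean-speed reparametrization then has Lipschitz constant $\tfrac32\ell_n\to\infty$, Arzel\`a--Ascoli does not apply, and the direct method as written breaks down. The lower bound of $W^\xi$ ``away from $\mathcal{Z}^\xi$'' and coercivity at infinity control, respectively, the length of the portion outside any fixed $\epsilon$-collar and the $L^\infty$-norm of the curves, but neither controls the length inside the degenerate collar, which is exactly where the pathology lives. This is the whole subtlety introduced by the wells of $W^\xi$ being full balls $\overline{B(a,r(\xi))}\cup\overline{B(b,r(\xi))}$ rather than isolated points.

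What Zuniga--Sternberg do instead (and what the paper is implicitly invoking) is to work in the quotient metric space $(\R^M/\!\sim,\dd^\xi)$ and parametrize curves by $\dd^\xi$-arc-length rather than Euclidean arc-length. Then the minimizing sequence is automatically $1$-Lipschitz in the $\dd^\xi$-metric, one applies Arzel\`a--Ascoli and Hopf--Rinow-type arguments in the length space, and only a posteriori verifies that the limit curve lies in $\mathrm{Lip}_{\mathcal{Z}^\xi}$ (it is Euclidean-Lipschitz on compacts disjoint from $\mathcal{Z}^\xi$ because there $W^\xi$ is bounded below, but may fail to be Lipschitz at the endpoints where it meets $\partial\mathcal{Z}^\xi$). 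Alternatively, one must perform an explicit detour-removal/reparametrization argument showing that, after modification, minimizing sequences have uniformly bounded Euclidean length. Either route requires an argument you have not supplied, and the statement ``the reduction to \eqref{eq:1/3} is precisely what neutralizes this pathology'' is not correct as it stands.
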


Now, for $p,q\in\R^M$, define
\[
\dd^0 (p,q) \coloneqq \lim_{\xi \to 0} \dd^\xi (p,q).
\]
Note that the limit exists, since the function $W^\xi$ in increasing (see Proposition \ref{thm:prop_W_xi}).
Using the same strategy as in \cite[Lemma 3.5]{CriFonHagPop}, we get

\begin{lemma}
The function $\dd^0:\R^M\times\R^M\to[0,\infty)$ is a metric on $\R^M$.
\end{lemma}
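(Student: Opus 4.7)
The plan is to verify the four defining properties of a metric by exploiting the monotonicity of the family $(\dd^\xi)_\xi$. For $\xi_1<\xi_2$, the pointwise bound $W^{\xi_1}\ge W^{\xi_2}$ from Proposition~\ref{thm:prop_W_xi} and the inclusion $\mathcal{Z}^{\xi_1}\subset\mathcal{Z}^{\xi_2}$ (so that $\mathrm{Lip}_{\mathcal{Z}^{\xi_1}}\subset\mathrm{Lip}_{\mathcal{Z}^{\xi_2}}$) together drive the infimum up as $\xi$ decreases, giving $\dd^{\xi_1}\ge\dd^{\xi_2}$. Thus $\dd^0(p,q)=\sup_{\xi>0}\dd^\xi(p,q)$, which will be the basic structural fact underlying all four verifications.

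Non-negativity of $\dd^0$ is inherited directly from that of each $\dd^\xi$. Symmetry follows from the fact that $\gamma\mapsto\gamma(-\,\cdot\,)$ is an energy-preserving bijection between admissible curves for $(p,q)$ and $(q,p)$, so $\dd^\xi(p,q)=\dd^\xi(q,p)$ passes to the sup. For the triangle inequality, given $p,r,q$ and curves $\gamma_1,\gamma_2$ almost realizing $\dd^\xi(p,r)$ and $\dd^\xi(r,q)$, concatenating and reparametrizing on $[-1,1]$ yields, by reparametrization invariance of $E^\xi$, an admissible curve from $p$ to $q$ with total energy $E^\xi(\gamma_1)+E^\xi(\gamma_2)$, hence $\dd^\xi(p,q)\le\dd^\xi(p,r)+\dd^\xi(r,q)$. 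Taking the sup in $\xi$ transfers both properties to $\dd^0$.

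The crux is the implication $\dd^0(p,q)=0\Longrightarrow p=q$, the reverse implication being trivial by a constant curve. The hypothesis forces $\dd^\xi(p,q)=0$ for every $\xi>0$; by the preceding proposition there exists a minimizer $\gamma^*_\xi\in\mathrm{Lip}_{\mathcal{Z}^\xi}([-1,1];\R^M)$ from $p$ to $q$ with $E^\xi(\gamma^*_\xi)=0$, hence $\sqrt{W^\xi(\gamma^*_\xi(t))}\,|\gamma^{*\prime}_\xi(t)|=0$ for $\mathcal{L}^1$-a.e.\ $t$. Set $I_\xi\coloneqq\{t\in[-1,1]:\gamma^*_\xi(t)\notin\mathcal{Z}^\xi\}$, which is open by continuity. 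On any compact subinterval $[s_1,s_2]\subset I_\xi$ the curve $\gamma^*_\xi$ is Lipschitz by definition of $\mathrm{Lip}_{\mathcal{Z}^\xi}$, and its image is a compact set on which $W^\xi$ is continuous and strictly positive by Proposition~\ref{thm:prop_W_xi}(2)--(3); consequently $|\gamma^{*\prime}_\xi|=0$ a.e.\ on $[s_1,s_2]$, so $\gamma^*_\xi$ is constant on each connected component of $I_\xi$. Therefore the image of $\gamma^*_\xi$ is contained in $\mathcal{Z}^\xi$ together with at most countably many isolated points. Since the image of a continuous curve on an interval is connected, for $\xi$ small enough that $\overline{B(a,r(\xi))}$ and $\overline{B(b,r(\xi))}$ are disjoint the image must lie in a single connected component of that union, forcing either $p=q$ or $\{p,q\}$ inside one of the two balls, in which case $|p-q|\le 2r(\xi)$. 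Letting $\xi\to 0$ and invoking~\eqref{eq:r_xi} concludes the proof.

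The main obstacle is exactly this connectedness argument: without it one cannot exclude the degeneracy in which $\gamma^*_\xi$ has vanishing energy yet pauses at isolated points outside $\mathcal{Z}^\xi$, which would otherwise obstruct the conclusion $p=q$. Once this topological consideration is in place, the remaining three axioms reduce to straightforward sup-limit arguments.
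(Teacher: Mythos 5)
Your argument is essentially correct, and it is notably more self-contained than the paper's, which simply invokes \cite[Lemma 3.5]{CriFonHagPop} for this lemma. The monotonicity $\dd^{\xi_1}\ge\dd^{\xi_2}$ for $\xi_1<\xi_2$ (via $W^{\xi_1}\ge W^{\xi_2}$ and $\mathrm{Lip}_{\mathcal{Z}^{\xi_1}}\subset\mathrm{Lip}_{\mathcal{Z}^{\xi_2}}$), the resulting sup-representation of $\dd^0$, and the verifications of symmetry and the triangle inequality by time reversal and concatenation with reparametrization are all sound.

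The one step worth tightening is the description of the image of $\gamma^*_\xi$ as ``$\mathcal{Z}^\xi$ together with at most countably many isolated points.'' In fact those candidate extra points cannot lie outside $\mathcal{Z}^\xi$ unless the curve is globally constant: if $C$ is a connected component of $I_\xi$ having a boundary point $t_0\in[-1,1]$, continuity of $\gamma^*_\xi$ forces the constant value on $C$ to equal $\gamma^*_\xi(t_0)\in\mathcal{Z}^\xi$, which contradicts $C\subset I_\xi$. Hence every component of $I_\xi$ is clopen in $[-1,1]$, and by connectedness of $[-1,1]$ either $I_\xi=\emptyset$ or $I_\xi=[-1,1]$. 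In the first case the image lies entirely in $\mathcal{Z}^\xi$, which for $\xi$ small is two disjoint closed balls, and connectedness of the image gives $|p-q|\le 2r(\xi)$; in the second case $\gamma^*_\xi$ is constant, so $p=q$ directly. This replaces the slightly awkward reasoning about connected components of $\mathcal{Z}^\xi$ augmented by extra singletons with a clean dichotomy, but the conclusion you reach is the same and your proof stands.
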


We now need a technical result whose proof follows the same lines as those of the proofs of \cite[Lemma 3.6, Proposition 3.7]{CriFonHagPop}.
Indeed, since we only restrict ourselves to the region where the curve lies outside of its zeros, we are in the same exact assumptions as 
Moreover, note that by \eqref{eq:r_xi}, the zeros of $W^\xi$ collapses to the points $a$ and $b$, as $\xi\to0$.

\begin{proposition}\label{prop:lim_gamma_0}
    Let $(\xi_n)_n$ be an infinitesimal sequence, and for each $n \in \N$, let $\gamma_{\xi_n}$ be a geodesic of $\dd^{\xi_n} (a, b)$.
    Then, up to a subsequence, there exists $\gamma_0 \in \mathrm{Lip}_\mathcal{Z} ([-1,1]; \R^M)$ with $\gamma_0(-1) = a$, $\gamma_0(1) = b$ such that
    \[
    \lim_{n \to \infty} \sup_{t \in [-\frac{1}{3}, \frac{1}{3}]} \dd^0 (\widetilde{\gamma}_{\xi_n} (t), \widetilde{\gamma}_0 (t)) = 0,
    \]
    Moreover,
    \[
    \sigma^0 \coloneqq \dd^0 (a,b) = \lim_{\xi \to 0} E^\xi (\gamma_0).
    \]
\end{proposition}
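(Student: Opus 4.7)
The plan is to adapt the Arzel\`a--Ascoli type compactness argument for metric-space-valued curves developed in \cite[Lemma 3.6, Proposition 3.7]{CriFonHagPop}, working in the length space structure associated to $\dd^0$ on the quotient $\R^M/\!\sim$. First, I would establish a uniform bound $\dd^{\xi_n}(a,b) \leq C$ independent of $n$: this follows at once from the monotonicity $W^{\xi_n} \leq W^{\mathrm{h}}$ given by Proposition \ref{thm:prop_W_xi}(4) together with admissibility of any fixed Lipschitz curve joining $a$ to $b$. Moreover, since $W^{\xi_n} \geq W_1$ (which is preserved by averaging and by the infimum over perturbations, up to harmless constants) and $W_1$ satisfies the linear growth \ref{itm:1_H3}, the image of each $\widetilde{\gamma}_{\xi_n}$ restricted to $[-1/3,1/3]$ lies in a common compact set $K \subset \R^M$.

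With these bounds in hand, I would further reparametrize each $\widetilde{\gamma}_{\xi_n}$ on $[-1/3, 1/3]$ by constant speed in the intrinsic distance induced by $W^{\xi_n}$, obtaining
\[
\dd^{\xi_n}\bigl(\widetilde{\gamma}_{\xi_n}(s), \widetilde{\gamma}_{\xi_n}(t)\bigr) \leq C|s-t|, \qquad s,t \in [-1/3, 1/3].
\]
Since $\dd^0 \leq \dd^{\xi_n}$ by monotonicity of $W^\xi$ in $\xi$, this yields $\dd^0$-equicontinuity of the family. An Arzel\`a--Ascoli argument in the compact space $(K/\!\sim, \dd^0)$ then extracts a subsequence converging uniformly in $\dd^0$ to a continuous limit $\widetilde{\gamma}_0 : [-1/3, 1/3] \to K$. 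Using \eqref{eq:r_xi}, which collapses the zero sets $\mathcal{Z}^{\xi_n}$ to $\{a, b\}$ in the limit, and lifting back to $\R^M$, I would extend by constants at the endpoints to obtain the desired $\gamma_0 \in \mathrm{Lip}_\mathcal{Z}([-1,1]; \R^M)$ with $\gamma_0(-1) = a$, $\gamma_0(1) = b$.

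For the energy identity, the lower bound
\[
\dd^0(a,b) \leq \liminf_{\xi\to 0}\int_{-1}^1 2\sqrt{W^\xi(\gamma_0)}\,|\gamma_0'|\,\dd t
\]
follows immediately, since $\gamma_0$ is admissible for $\dd^\xi(a,b)$ and $\dd^\xi(a,b) \nearrow \dd^0(a,b)$ by monotonicity. For the matching upper bound I would use lower semicontinuity of the length functional $L$ with respect to uniform $\dd^0$-convergence, combined with the uniform convergence $W^\xi \to W^{\mathrm{h}}$ on compact sets from Proposition \ref{thm:prop_W_xi}(4), applied along the approximating sequence $\widetilde{\gamma}_{\xi_n}$. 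The identity $L(\widetilde{\gamma}_{\xi_n}) = \dd^{\xi_n}(a,b)$, passed to the limit, then closes the gap and forces equality throughout.

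The principal obstacle, inherited from \cite{CriFonHagPop}, is that the zero sets $\mathcal{Z}^{\xi_n}$ genuinely vary with $n$ and collapse to $\{a,b\}$ only in the limit \eqref{eq:r_xi}. Consequently the natural setting for Arzel\`a--Ascoli is the quotient $(\R^M/\!\sim, \dd^0)$, whereas the conclusion concerns honest Lipschitz curves in $\R^M$ joining $a$ to $b$ exactly. The reparametrization \eqref{eq:1/3} is designed precisely to absorb the unboundedly varying zero-energy travel into the fixed end intervals $[-1,-1/3] \cup [1/3,1]$, leaving in the middle a uniformly controlled Lipschitz transition; justifying rigorously that the quotient limit lifts to a Lipschitz curve with endpoints exactly $a$ and $b$ (rather than merely in their $\dd^0$-equivalence classes) is the delicate technical step.
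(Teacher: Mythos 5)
Your plan correctly identifies that the argument should adapt \cite[Lemma 3.6, Proposition 3.7]{CriFonHagPop} in the quotient length space $(\R^M/\!\sim,\dd^0)$, which is exactly what the paper intends (the paper's own proof is essentially that citation plus the observation that the zero sets collapse). However, there is a genuine error in the equicontinuity step: you claim $\dd^0 \leq \dd^{\xi_n}$ ``by monotonicity of $W^\xi$ in $\xi$,'' but this inequality goes the wrong way. Since $W^\xi$ is non-increasing in $\xi$ and converges increasingly to $W^{\mathrm{h}}$ as $\xi\to 0$ (Theorem \ref{thm:prop_W_xi}(4)), and since the admissible class $\mathrm{Lip}_{\mathcal{Z}^\xi}$ also shrinks as $\xi\to0$, the induced distances satisfy $\dd^\xi \nearrow \dd^0$, so in fact $\dd^0 \geq \dd^{\xi_n}$. (You implicitly use the correct direction yourself two paragraphs later when invoking $\dd^\xi(a,b)\nearrow\dd^0(a,b)$ for the energy identity, so the proposal is internally inconsistent.) Consequently the bound $\dd^{\xi_n}(\widetilde\gamma_{\xi_n}(s),\widetilde\gamma_{\xi_n}(t)) \leq C|s-t|$ from the constant-speed reparametrization gives no control on $\dd^0(\widetilde\gamma_{\xi_n}(s),\widetilde\gamma_{\xi_n}(t))$, which is larger, and the Arzel\`a--Ascoli step collapses.

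To repair this one cannot simply reverse the inequality; one needs to show that $\dd^0 - \dd^{\xi_n}$ tends to zero uniformly on the compact set $K$ containing the images of the curves. This in turn requires bounding $\sqrt{W^{\mathrm{h}}}-\sqrt{W^{\xi_n}}$ via the locally uniform convergence of Theorem \ref{thm:prop_W_xi}(4) \emph{and} controlling the Euclidean length of the $\dd^{\xi_n}$-geodesics, which is delicate precisely because $W^{\xi_n}$ can be arbitrarily small in the annulus between $\mathcal{Z}^{\xi_n}$ and a fixed neighborhood of $\{a,b\}$. This is the content of the cited lemmas in \cite{CriFonHagPop} (adapted to the present situation where the zero sets are balls, not points), and it is the step your sketch glosses over.
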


\begin{remark}
Note that the above result holds for every pair of points $p,q\in\R^M$ as end points in place of $a$ and $b$, respectively.
\end{remark}

We are now in position to prove the main result of this section.

\begin{proposition}\label{prop:lim_sigma_xi}
It holds that
\[
\lim_{\xi\to0} \sigma^\xi = \sigma^{\mathrm{h}},
\]
where $\sigma^\xi$ and $\sigma^{\mathrm{h}}$ are defined in \eqref{eq:sigma_xi} and \eqref{eq:sigma_gamma_limit}, respectively.
\end{proposition}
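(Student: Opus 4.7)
The plan is to prove both inequalities separately, trapping $\lim_{\xi \to 0} \sigma^\xi$ between $\sigma^{\mathrm{h}}$ from above and below.

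First I will establish the upper bound $\sigma^\xi \leq \sigma^{\mathrm{h}}$ for every $\xi > 0$. This is immediate from Proposition \ref{thm:prop_W_xi}(4): since $W^\xi \leq W^{\mathrm{h}}$ pointwise on $\R^M$, every curve admissible for $\sigma^{\mathrm{h}}$ is admissible for $\sigma^\xi$ with pointwise smaller integrand $2\sqrt{W^\xi(\gamma)} \leq 2\sqrt{W^{\mathrm{h}}(\gamma)}$, so $\sigma^\xi \leq \sigma^{\mathrm{h}}$, whence $\limsup_{\xi \to 0} \sigma^\xi \leq \sigma^{\mathrm{h}}$.

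For the lower bound, I will use the geodesic structure developed in this section. Take any sequence $\xi_n \to 0$ and, for each $n$, let $\gamma_{\xi_n}$ be a geodesic of $\dd^{\xi_n}(a,b)$ realizing $\sigma^{\xi_n}$ (reparametrized according to \eqref{eq:1/3}). By Proposition \ref{prop:lim_gamma_0}, up to a subsequence there exists a limit curve $\gamma_0 \in \mathrm{Lip}_{\mathcal{Z}}([-1,1];\R^M)$ with $\gamma_0(-1)=a$ and $\gamma_0(1)=b$ such that
\[
\lim_{n \to \infty}\sigma^{\xi_n} \;=\; \lim_{n \to \infty}\int_{-1}^1 2\sqrt{W^{\xi_n}(\gamma_0(t))}\,|\gamma_0'(t)|\,\dd t.
\]
By the monotone convergence $W^{\xi_n} \nearrow W^{\mathrm{h}}$ from Proposition \ref{thm:prop_W_xi}(4), I can pass the limit inside the integral and obtain $\int_{-1}^1 2\sqrt{W^{\mathrm{h}}(\gamma_0)}|\gamma_0'|\,\dd t$. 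Since $\gamma_0 \in \mathrm{Lip}_{\mathcal{Z}}$ is an admissible competitor in the alternative formulation of $\sigma^{\mathrm{h}}$ highlighted in the remark following its definition, this last integral is at least $\sigma^{\mathrm{h}}$. Combined with the upper bound, this yields equality.

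The main technical obstacle is making the interchange of limit and integral rigorous: the limit curve $\gamma_0$ is only locally Lipschitz on compact subsets of $[-1,1]$ avoiding the set of times at which it hits the wells, so $|\gamma_0'|$ is defined only almost everywhere off those times, and the integrands are not a priori dominated. Two observations handle this. First, since $W^{\mathrm{h}}$ vanishes at $a$ and $b$, the integrand vanishes on the exceptional time set, so the integrals make sense unambiguously. Second, on the compact image $\gamma_0([-1,1]) \subset \R^M$, $\sqrt{W^{\xi_n}}$ converges to $\sqrt{W^{\mathrm{h}}}$ monotonically from below and uniformly by Proposition \ref{thm:prop_W_xi}(4), so monotone convergence for nonnegative integrands (against the measure $|\gamma_0'(t)|\,\dd t$ on the Lipschitz portion of the curve) delivers the desired limit identification.
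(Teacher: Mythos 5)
Your proof is correct and follows essentially the same route as the paper's: the upper bound by monotonicity of $W^\xi \leq W^{\mathrm{h}}$, and the lower bound via Proposition~\ref{prop:lim_gamma_0} to pass to a limit curve $\gamma_0$ and then identify $\lim_n \sigma^{\xi_n}$ with $\int_{-1}^1 2\sqrt{W^{\mathrm{h}}(\gamma_0)}|\gamma_0'|\,\dd t$, which dominates $\sigma^{\mathrm{h}}$ since $\gamma_0 \in \mathrm{Lip}_{\mathcal{Z}}$ is admissible in the equivalent formulation. The only place you diverge from the paper is the mechanism for exchanging limit and integral: you apply the monotone convergence theorem directly to the nonnegative increasing integrands $2\sqrt{W^{\xi_n}(\gamma_0)}|\gamma_0'|$ (with the observation that the integrand vanishes where $|\gamma_0'|$ is undefined, since both potentials are zero in the wells), whereas the paper truncates the time interval to $[t^a_n, t^b_n]$ where the curve has finite length $L_n$, selects $\xi_n$ so that $\|\sqrt{W^{\xi_n}} - \sqrt{W^{\mathrm{h}}}\|_{C^0(B(0,R))} < \tfrac{1}{2nL_n}$, and then lets the truncation exhaust $[-1,1]$. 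Both mechanisms are valid; your monotone-convergence argument is arguably cleaner since it bypasses the quantitative choice of $\xi_n$ tied to the length $L_n$ of the truncated curve (and also sidesteps the need to relabel the subsequence to make $\xi_n$ monotone, though you should note that one should pass to a decreasing subsequence to invoke monotonicity). The uniform convergence you cite is not actually needed for the conclusion; pointwise monotone convergence suffices for the MCT step, so it can be dropped from the argument.
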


\begin{proof}
\textbf{Step 1: we show that $\sigma^0\leq \sigma^{\mathrm{h}}$.}
Since $W^\xi$ is increasingly converging to $W^{\mathrm{h}}$ as $\xi\to0$ (see Proposition \ref{thm:prop_W_xi}), this follows from the definition of $\sigma^0$.\\

\textbf{Step 2: we show that $\sigma^0\geq \sigma^{\mathrm{h}}$.}
The proof follows the same lines as that of \cite[Proposition 4.6]{CriFonHagPop}.
We report it in here for the reader's convenience.
First of all, note that since
\[
\lim_{\xi\to0} \sigma^\xi = \sup_{\xi>0} \sigma^\xi,
\]
that we get the desired result if we prove that there exists an infinitesimal sequence $(\xi_n)_n$ such that
\[
\lim_{n\to\infty}\sigma^{\xi_n} \geq \sigma^{\mathrm{h}}.
\]
Let $\gamma_0\in \mathrm{Lip}_\mathcal{Z} ([-1,1]; \R^M)$ be the curve given by Proposition \ref{prop:lim_gamma_0}.
For $n\in\N\setminus\{0\}$, define
\[
T^a_n \coloneqq \left\{ t\in [-1,1] : \gamma_0(t)\notin \overline{B\left(a,r(\xi_n)+\frac{1}{n}\right)} \right\},
\]
\[
T^b_n \coloneqq \left\{ t\in [-1,1] : \gamma_0(t)\notin \overline{B\left(b,r(\xi_n)+\frac{1}{n}\right)} \right\}.
\]
Note that, by assumption \eqref{eq:1/3}, we have that
\[
T^a_n = [-1, t_a^{\xi_n}],\quad\quad\quad
T^a_n = [t_b^{\xi_n},1],
\]
where
\begin{equation}\label{eq:T_n}
\lim_{n\to\infty} t_a^{\xi_n} = -1,\quad\quad\quad
\lim_{n\to\infty} t_b^{\xi_n} = 1.
\end{equation}
Moreover, note that, since $\gamma_0$ is Lipschitz in $[t_a^{\xi_n}, t_b^{\xi_n}]$, it holds that
\[
L_n\coloneqq L^{\xi_n}(\gamma_0) = \int_{t_a^{\xi_n}}^{t_b^{\xi_n}} |\gamma'_0(t)|\dd t < \infty,
\]
for all $n\in\N\setminus\{0\}$.
Let $R>0$ be such that $\gamma_0(t)\in B(0,R)$ for all $t\in[-1,1]$.
Using the uniform convergence of $W^{\xi_n}$ to $W^{\mathrm{h}}$, we can find an infinitesimal sequence $(\xi_n)_n$ such that
\begin{equation}\label{eq:choice_xi_n}
\| \sqrt{W^{\xi_n}} - \sqrt{W^{\mathrm{h}}} \|_{C^0(B(0,R))} < \frac{1}{2n L_n},
\end{equation}
for all $n\in\N\setminus\{0\}$.
Thus, for each $n\in\N$, we get
\begin{align*}
\sigma^0 &=\sup_{\xi>0} \int_{-1}^1 2\sqrt{W^\xi(\gamma_0(t))}|\gamma'_0(t)|\dd t 
    \geq \int_{-1}^1 2\sqrt{W^{\xi_n}(\gamma_0(t))}|\gamma'_0(t)|\dd t 
    \geq \int_{t_a^{\xi_n}}^{t_b^{\xi_n}} 2\sqrt{W^{\xi_n}(\gamma_0(t))}|\gamma'_0(t)|\dd t \\
&\geq \int_{t_a^{\xi_n}}^{t_b^{\xi_n}} 2\sqrt{W^{\mathrm{h}}(\gamma_0(t))}|\gamma'_0(t)|\dd t 
        - 2\int_{t_a^{\xi_n}}^{t_b^{\xi_n}} \left| \sqrt{W^{\xi_n}(\gamma_0(t))} -\sqrt{W^{\mathrm{h}}(\gamma_0(t))}\right|\,|\gamma'_0(t)|\dd t \\
&\geq \int_{t_a^{\xi_n}}^{t_b^{\xi_n}} 2\sqrt{W^{\mathrm{h}}(\gamma_0(t))}|\gamma'_0(t)|\dd t 
        - \frac{1}{n},
\end{align*}
where in the last step we used \eqref{eq:choice_xi_n}.
Taking the limit as $n\to\infty$ on both sides, and using the Dominated Convergence Theorem, yields the desired result.
\end{proof}

\section{Compactness}

In this section we prove Theorem \ref{thm:compactness}.
\begin{proof}
    Let $(u_n)_{n \in \N} \subset W^{1,2}(\Omega; \R^M)$ be a sequence such that
    $$ \sup_{n \in \N} F_n^{(1)} (u_n) \leq C < +\infty. $$
    Using \ref{itm:1_H3} and \ref{itm:1_H5}, and using the same argument as in \cite[Proposition 5.1]{CriFonHagPop}, it is possible to find a continuous function $\widetilde{W}:\R^M\to[0,\infty)$ vanishing only in $a$ and $b$ such that there exists $C>0$ with
    \[
    \widetilde{W}(z) \geq \frac{1}{C}|z|,
    \]
    for all $|z|\geq C$, and
    $$ \sup_{n \in \N} \int_\Omega \left[ \frac{1}{\varepsilon_n}  \widetilde{W} (u_n) + \varepsilon_n | \nabla u_n |^2 \right] \dd x \leq \sup_{n \in \N} F_n^{(1)} (u_n) \leq C < +\infty. $$
    This allows us to use the methods of \cite[Section 4]{fonseca1989gradient} in order to extract a subsequence $(u_{n_k})_{k \in \N} \subset W^{1,2}(\Omega; \R^M)$ such that $u_{n_k} \to u \in \mathrm{BV}(\Omega; \{a, b\})$ strongly in $L^1 (\Omega; \R^M)$. 
\end{proof}

\section{Liminf inequality}\label{sec:liminf}

The goal of this section is to prove the following proposition.
\begin{proposition}
    Let $(u_n)_{n \in \N} \subset W^{1,2}(\Omega; \R^M)$ be a sequence such that $u_n \to u \in \mathrm{BV}(\Omega; \{ a, b \})$ strongly in $L^1 (\Omega; \R^M)$. Then, it holds that
    $$ \liminf_{n \to \infty} F_n^{(1)} (u_n) \geq F_\infty^{(1)}(u). $$
\end{proposition}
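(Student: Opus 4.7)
The plan is to carry out the liminf via the $W^\xi$-approximation strategy sketched in the introduction. First I would argue that I may assume $\liminf_n F_n^{(1)}(u_n) < \infty$ and, along a subsequence achieving the liminf, that $u_n \to u$ in $L^1$ and almost everywhere. Then, via the truncation operator $\mathcal{T}_M$ introduced in Step 1.1 of Theorem \ref{thm:prop_W_xi} with $M$ larger than the constant from \ref{itm:1_H3}, I reduce to the case $\|u_n\|_\infty \leq M$ uniformly in $n$: since $|\nabla \mathcal{T}_M u_n| \leq |\nabla u_n|$ and the linear growth bound from \ref{itm:1_H3} controls the potential contribution outside $B_M$, the truncated sequence has energy bounded by that of $u_n$ up to a negligible error and converges to the same limit $u$.

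The core of the argument then proceeds as follows. By Lemma \ref{lemma:needed},
\[
F_n^{(1)}(u_n) \geq \int_\Omega \varepsilon_n |\nabla u_n|^2 \dd x + \frac{1}{\varepsilon_n}\int_\Omega \int_{Q_1\times Q_2} W(y_1,y_2,\mathcal{U}_2 u_n(x,y_1,y_2)) \dd y_1 \dd y_2 \dd x.
\]
Writing $\mathcal{U}_2 u_n(x,y_1,y_2) = u_n(x) + \psi_1^{n,x}(y_1) + \psi_2^{n,x}(y_1,y_2)$ with $\psi_1^{n,x} \coloneqq \mathcal{U}_1 u_n(x,\cdot) - u_n(x)$ and $\psi_2^{n,x} \coloneqq \mathcal{U}_2 u_n(x,\cdot,\cdot) - \mathcal{U}_1 u_n(x,\cdot)$, I fix $\xi>0$ and define the good set $G_n \subset \Omega$ of points $x$ where the scaled perturbations $\psi_1^{n,x}/c_\xi$ and $\psi_2^{n,x}/c_\xi$ lie in $\mathcal{A}_1^\xi$ and $\mathcal{A}_2^\xi$ respectively (with a harmless rescaling constant $c_\xi$ absorbed into $W^\xi$). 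By Theorem \ref{thm:energy_estimates} combined with \eqref{eq:chain_rule_psi1}--\eqref{eq:chain_rule_psi2} and Markov's inequality, each subset of $\Omega$ on which one of the admissibility constraints is violated has measure bounded by a constant times $\delta_n$, $\eta_n/\delta_n$, $\delta_n^2/\varepsilon_n$, or $\eta_n^2/\varepsilon_n$, all of which vanish under the standing scaling assumption. Hence $|\Omega\setminus G_n|\to 0$.

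On $G_n$, by definition of $W^\xi$ the inner integral is bounded below by $W^\xi(u_n(x))$, and discarding the contribution from $\Omega\setminus G_n$ (both terms being nonnegative) I obtain
\[
F_n^{(1)}(u_n) \geq \int_{G_n}\left[\frac{1}{\varepsilon_n} W^\xi(u_n) + \varepsilon_n|\nabla u_n|^2\right]\dd x \geq \int_{G_n} 2\sqrt{W^\xi(u_n)}\,|\nabla u_n|\dd x.
\]
Now the standard Modica--Mortola chain rule argument applies: letting $\Phi^\xi(z)\coloneqq \mathrm{d}^\xi(a,z)$ be the $2\sqrt{W^\xi}$-geodesic distance (Lipschitz on bounded sets by continuity of $W^\xi$, i.e. Step 2 of Theorem \ref{thm:prop_W_xi}), the composition $\Phi^\xi\circ u_n \in W^{1,1}$ satisfies $|\nabla(\Phi^\xi\circ u_n)|\leq 2\sqrt{W^\xi(u_n)}|\nabla u_n|$, and $\Phi^\xi\circ u_n \to \Phi^\xi\circ u$ in $L^1$. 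Since $u\in \mathrm{BV}(\Omega;\{a,b\})$, the limit $\Phi^\xi\circ u$ takes only the values $0$ and $\sigma^\xi$, so lower semicontinuity of total variation together with $|\Omega\setminus G_n|\to 0$ gives
\[
\liminf_{n\to\infty} F_n^{(1)}(u_n) \geq |D(\Phi^\xi\circ u)|(\Omega) = \sigma^\xi\,\mathrm{Per}(\{u=a\};\Omega).
\]
Letting $\xi\to 0$ and invoking Proposition \ref{prop:lim_sigma_xi} concludes the proof.

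The main obstacle is the verification that $\psi_1^{n,x}, \psi_2^{n,x}$ belong to $\mathcal{A}_1^\xi, \mathcal{A}_2^\xi$ on a set of asymptotically full measure in $\Omega$: in particular the condition $\|\nabla_{y_2}\psi_2(y_1,\cdot)\|_{L^2(Q_2)}\leq 1$ in $\mathcal{A}_2^\xi$ is pointwise in $y_1$, so Markov's inequality must be applied jointly in $(x,y_1)$, and one has to check that the resulting slicing in $y_1$ is compatible with the lower bound by $W^\xi(u_n(x))$ (via a localized version of the infimum defining $W^\xi$ on a subdomain of $Q_1$ of full measure). This adaptation, together with controlling the transition region between $G_n$ and its complement, is the genuinely new technical point compared with the single-scale case of \cite{CriFonGan_fixed_Sup}.
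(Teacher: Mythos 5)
Your proposal follows essentially the same route as the paper: truncate to bounded $u_n$, unfold via Lemma \ref{lemma:needed}, decompose $\mathcal{U}_2 u_n = u_n + \psi_1 + \psi_2$, pass to the lower bound $W^\xi(u_n(x))$ on an asymptotically full-measure set, apply Young's inequality, use the geodesic-distance chain rule to reach $\sigma^\xi\,\mathrm{Per}(\{u=a\};\Omega)$, and finally invoke Proposition \ref{prop:lim_sigma_xi} as $\xi\to 0$.

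Two remarks on the step you yourself flag as the genuine obstacle. First, defining a single good set $G_n\subset\Omega$ of points $x$ at which $\psi_2^{n,x}\in\mathcal{A}_2^\xi$ does not work as stated: membership in $\mathcal{A}_2^\xi$ requires $\|\nabla_{y_2}\psi_2^{n,x}(y_1,\cdot)\|_{L^2(Q_2)}\leq 1$ for \emph{a.e.\ $y_1\in Q_1$}, and the Chebyshev estimate coming from \eqref{eq:chain_rule_psi2} only controls the measure of the bad set of \emph{pairs} $(x,y_1)$, not of the set of $x$ for which the bad $y_1$-slice is null. The paper therefore slices twice, introducing both $\widetilde{\Omega}_n\subset\Omega$ and, for a.e.\ $x$, a set $\widetilde{Q}_{1,n}(x)\subset Q_1$ with $|Q_1\setminus\widetilde{Q}_{1,n}(x)|\to 0$; it then passes to a localized cell problem $W^\xi(z,\widetilde{Q}_{1,n}(x))$, extends $\psi_2$ by zero outside $\widetilde{Q}_{1,n}(x)$ to get admissible competitors for the unrestricted $W^\xi(z)$, and shows the resulting error $\int_\Omega\int_{Q_1\setminus\widetilde{Q}_{1,n}(x)}\int_{Q_2} W$ vanishes using \ref{itm:1_H4} and $\|\psi_1\|_\infty\leq 2M$. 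This is exactly the "localized version of the infimum" you allude to, and it is the part of the argument that actually has content beyond the single-scale case. Second, the "harmless rescaling constant $c_\xi$ absorbed into $W^\xi$" is not needed and is in fact problematic: one cannot replace $\psi_i$ by $\psi_i/c_\xi$ without changing the argument of $W$ in the decomposition $\mathcal{U}_2 u_n = u_n+\psi_1+\psi_2$. The paper instead chooses $n$ large so that $\max\{\sqrt{\delta_n},\sqrt{\eta_n/\delta_n}\}\leq\xi$, which makes the $L^2$ constraints in $\mathcal{A}_1^\xi,\mathcal{A}_2^\xi$ hold automatically, while the gradient constraints are exactly what define $\widetilde{\Omega}_n$ and $\widetilde{Q}_{1,n}(x)$. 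With these adjustments, your outline matches the paper's proof.
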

\begin{proof}
\textbf{Step 1: Reduction to uniformly bounded $u_n$ and $W$ linear outside of a ball.}
Let $M > 0$ such that $M > R$, where $R > 0$ is the constant appearing in \ref{itm:1_H5}. Let $\varphi_M \colon [0, +\infty) \to [0,1]$ be a smooth cut-off function such that
\begin{gather*}
    \varphi_M (t) \begin{cases}
        = 1 \qquad &t \in [0,M], \\
        \in (0,1) \qquad &t \in (M, 2M),\\
        = 0 \qquad &t \in [2M,+\infty).
    \end{cases}
\end{gather*}
We now modify $W$ such that it is linear outside of a ball of radius $2M$: to do that, we define $\widetilde{W}_M \colon \R^N \times \R^N \times \R^M \to [0,+\infty) $ as
\begin{equation*}
    \widetilde{W}_M (y_1, y_2, z) \coloneqq \varphi_M (|z|) W(y_1, y_2, z) + (1 - \varphi_M (|z|)) \cfrac{|z|}{R}.
\end{equation*}
Observe that this function still satisfies all the needed assumptions, that is \ref{itm:1_H1}, \ref{itm:1_H2}, \ref{itm:1_H3} and \ref{itm:1_H4}.
In this way, we have
\begin{gather*}
    \widetilde{W}_M (y_1, y_2, z) \begin{cases}
        = W(y_1, y_2, z) \qquad &|z| \leq M, \\
        \geq \frac{|z|}{R} \qquad &|z| \geq R,\\
        = \frac{|z|}{R} \qquad &|z| \geq 2M.
    \end{cases}
\end{gather*}
We can then define
\begin{equation*}
    \widetilde{F}^{(1)}_{n,M} (v) \coloneqq \int_\Omega \left[ \cfrac{1}{\varepsilon_n} \widetilde{W}_M \Bigg( \!\bigg\{\! \frac{x}{\delta_n} \! \bigg\}_{Q_1}\!, \bigg\{\! \frac{\delta_n}{\eta_n}\!\bigg\{ \!\frac{x}{\delta_n} \!\bigg\}_{Q_1} \!\bigg\}_{Q_2}\!, v \!\Bigg) + \varepsilon_n | \nabla v |^2 \right] \dd x.
\end{equation*}
Our goal is to now prove that it is enough to prove the statement of the proposition for a sequence uniformly bounded in $L^\infty$. This means that, if
\begin{equation}\label{eq:reduced_liminf}
    F^{(1)}_\infty (u) \leq \liminf_{n \to \infty} \widetilde{F}^{(1)}_{n,M} (v_n),
\end{equation}
holds for all $u \in \mathrm{BV}(\Omega;\{a,b\})$ and all sequences $(v_n)_n \subset W^{1,2}(\Omega; \R^M)$ such that $v_n \to u$ in $L^1 (\Omega;\R^M)$ and $\| v_n \|_\infty \leq 2M$, then
\begin{equation}\label{eq:liminf}
    F^{(1)}_\infty (u) \leq \liminf_{n \to \infty} F^{(1)}_n (u_n),
\end{equation}
holds for all sequences $(u_n)_n \subset W^{1,2}(\Omega; \R^M)$ such that $u_n \to u$ in $L^1 (\Omega;\R^M)$. \\
Let us then assume that \eqref{eq:reduced_liminf} holds, and fix $u \in \mathrm{BV} (\Omega; \{ a,b\})$ and a sequence $(u_n)_n \subset W^{1,2}(\Omega; \R^M)$ with $u_n \to u$ in $L^1 (\Omega;\R^M)$. 
Let us fix a constant $C > 0$. We define the truncation operator $\mathcal{T}_C \colon W^{1,2}(\Omega; \R^M) \to L^\infty (\Omega;\R^M) \cap W^{1,2} (\Omega; \R^M)$ as
\begin{gather*}
    \mathcal{T}_C \phi(x) = \begin{cases}
        \phi(x) \qquad &|\phi(x)| \leq C, \\
        C \frac{\phi(x)}{|\phi(x)|} \qquad &|\phi(x)| > C.
    \end{cases}
\end{gather*}
This definition implies
\begin{equation}\label{eq:gradient_truncation}
	\|\mathcal{T}_C \phi \|_\infty \leq C, \qquad | \nabla \mathcal{T}_C \phi (x) | \leq | \nabla \phi (x)|.
\end{equation}
Using this operator, we define $v_n = \mathcal{T}_{2M} u_n$, which is going to be a sequence uniformly bounded in $L^\infty$. Therefore, \eqref{eq:reduced_liminf} holds if we prove that $v_n \to u$ in $L^1 (\Omega;\R^M)$. Using the triangle inequality it is enough to prove that $v_n - u_n \to 0$ in $L^1 (\Omega;\R^M)$, as
$$ \| v_n - u \|_{L^1 (\Omega;\R^M)} \leq \| v_n - u_n \|_{L^1 (\Omega;\R^M)} + \| u_n - u \|_{L^1 (\Omega;\R^M)}. $$
The definition of truncation operator yields that:
\begin{align}\label{eq:reduced_liminf_2}
    \| u_n - v_n \|_{L^1 (\Omega;\R^M)} &= \int_\Omega | u_n - v_n | \dd x \nonumber \\
    &= \int_{\{ | u_n|\leq 2M \}} |u_n - u_n|\dd x + \int_{\{ | u_n| > 2M \}} |u_n - v_n|\dd x \nonumber \\
    &= \int_{\{ | u_n| > 2M \}} |u_n - v_n|\dd x.
\end{align}
Now let $\Lambda_n \coloneqq \{ x \in \Omega : | u_n (x) | > 2M \}$. Using Chebychev's inequality and assumption \ref{itm:1_H5} we have
\begin{equation*}
    | \Lambda_n | \leq \frac{1}{2M} \! \int_{\Lambda_n} \! |u_n| \dd x \leq \frac{1}{2M} \!\int_\Omega \!|u_n| \dd x \leq \frac{R}{2M}\! \int_\Omega\! W\! \Bigg( \!\bigg\{\! \frac{x}{\delta_n} \! \bigg\}_{Q_1}\!, \bigg\{\! \frac{\delta_n}{\eta_n}\!\bigg\{ \!\frac{x}{\delta_n} \!\bigg\}_{Q_1} \!\bigg\}_{Q_2}\!, u_n \!\Bigg)\! \dd x \leq c \frac{R}{2M} \varepsilon_n.
\end{equation*}
Going back to \eqref{eq:reduced_liminf_2}, using triangle inequality and Assumption \ref{itm:1_H5} we now have
\begin{align*}
    \| u_n - v_n \|_{L^1 (\Omega;\R^M)} &= \int_{\Lambda_n} |u_n - v_n|\dd x \\
    &\leq \int_{\Lambda_n} | u_n | \dd x + \int_{\Lambda_n} | v_n | \dd x \\
    &\leq R \int_{\Lambda_n} W \Bigg( \!\bigg\{\! \frac{x}{\delta_n} \! \bigg\}_{Q_1}\!, \bigg\{\! \frac{\delta_n}{\eta_n}\!\bigg\{ \!\frac{x}{\delta_n} \!\bigg\}_{Q_1} \!\bigg\}_{Q_2}\!, u_n \!\Bigg) \dd x + \int_{\Lambda_n} 2M \dd x \\
    &\leq R \int_\Omega W \Bigg( \!\bigg\{\! \frac{x}{\delta_n} \! \bigg\}_{Q_1}\!, \bigg\{\! \frac{\delta_n}{\eta_n}\!\bigg\{ \!\frac{x}{\delta_n} \!\bigg\}_{Q_1} \!\bigg\}_{Q_2}\!, u_n \!\Bigg) \dd x + 2M | \Lambda_n | \\
    &\leq c R \varepsilon_n + c R \varepsilon_n = C \varepsilon_n,
\end{align*}
and therefore we conclude. \\
In order to prove \eqref{eq:liminf} we need to prove that $\widetilde{F}^{(1)}_{n,M} (v_n) \leq F^{(1)}_n (u_n)$. This, together with \eqref{eq:reduced_liminf}, will let us conclude.\\
From the definition of $\Lambda_n$ and the truncation operator we have that for $x \in \Lambda_n$ it holds
$$ |u_n(x)| > 2M = | \mathcal{T}_{2M} u_n (x) | = |v_n (x)|. $$
Therefore, for $x \in \Lambda_n$ we have
\begin{align*}
	\widetilde{W}_M \!\Bigg( \!\bigg\{\! \frac{x}{\delta_n} \! \bigg\}_{Q_1}\!, \bigg\{\! \frac{\delta_n}{\eta_n}\!\bigg\{ \!\frac{x}{\delta_n} \!\bigg\}_{Q_1} \!\bigg\}_{Q_2}\!, u_n(x) \!\Bigg) &= \frac{|u_n(x)|}{R} \\
	&\geq \frac{|v_n (x)|}{R} = \widetilde{W}_M\! \Bigg( \!\bigg\{\! \frac{x}{\delta_n} \! \bigg\}_{Q_1}\!, \bigg\{\! \frac{\delta_n}{\eta_n}\!\bigg\{ \!\frac{x}{\delta_n} \!\bigg\}_{Q_1} \!\bigg\}_{Q_2}\!, v_n(x) \!\Bigg).
\end{align*}
For $x \in \Omega \setminus \Lambda_n$ it holds that $v_n(x) = \mathcal{T}_{2M} u_n (x) = u_n (x)$, therefore
$$ \widetilde{W}_M \Bigg( \!\bigg\{\! \frac{x}{\delta_n} \! \bigg\}_{Q_1}\!, \bigg\{\! \frac{\delta_n}{\eta_n}\!\bigg\{ \!\frac{x}{\delta_n} \!\bigg\}_{Q_1} \!\bigg\}_{Q_2}\!, u_n(x) \!\Bigg) = \widetilde{W}_M \Bigg( \!\bigg\{\! \frac{x}{\delta_n} \! \bigg\}_{Q_1}\!, \bigg\{\! \frac{\delta_n}{\eta_n}\!\bigg\{ \!\frac{x}{\delta_n} \!\bigg\}_{Q_1} \!\bigg\}_{Q_2}\!, v_n(x) \!\Bigg). $$
This implies that
\begin{align*}
	\widetilde{W}_M \Bigg( \!\bigg\{\! \frac{x}{\delta_n} \! \bigg\}_{Q_1}\!, \bigg\{\! \frac{\delta_n}{\eta_n}\!\bigg\{ \!\frac{x}{\delta_n} \!\bigg\}_{Q_1} \!\bigg\}_{Q_2}\!, v_n(x) \!\Bigg) &\leq \widetilde{W}_M \Bigg( \!\bigg\{\! \frac{x}{\delta_n} \! \bigg\}_{Q_1}\!, \bigg\{\! \frac{\delta_n}{\eta_n}\!\bigg\{ \!\frac{x}{\delta_n} \!\bigg\}_{Q_1} \!\bigg\}_{Q_2}\!, u_n(x) \!\Bigg) \\
	&\leq W \Bigg( \!\bigg\{\! \frac{x}{\delta_n} \! \bigg\}_{Q_1}\!, \bigg\{\! \frac{\delta_n}{\eta_n}\!\bigg\{ \!\frac{x}{\delta_n} \!\bigg\}_{Q_1} \!\bigg\}_{Q_2}\!, u_n(x) \!\Bigg),
\end{align*}
which, together with \eqref{eq:gradient_truncation} allows us to conclude.\\
Therefore from now on we assume that $W$ has linear growth at infinity and that $\| u_n \|_\infty \leq 2M$.

\textbf{Step 2:} Let $(u_n)_n \subset W^{1,2}(\Omega; \R^M)$ be a sequence converging to $u \in \mathrm{BV}(\Omega; \{a,b\})$ in $L^1(\Omega; \R^M)$, such that $\| u_n \|_\infty \leq 2M$. We claim that 
\begin{equation}\label{eq:liminf_young_ineq}
    \liminf_{n \to \infty} F_n^{(1)} (u_n) \geq \liminf_{n \to \infty} \int_\Omega 2 \sqrt{W^\xi_n(u_n)} | \nabla u_n | \dd x.
\end{equation}

Let us define now 
\begin{align*}
	\widetilde{\Omega}_n &\coloneqq \{ x \in \Omega : \| \nabla_{y_1} \mathcal{U}_1 u_n (x, \cdot) \|_{L^2 (Q_1; \R^{N \times M})} \leq 1 \}, \\
	\widetilde{Q}_{1,n} (x) &\coloneqq \{ y_1 \in Q_1 : \| \nabla_{y_2} \mathcal{U}_2 u_n (x,y_1, \cdot) \|_{L^2 (Q_2; R^{N \times M})} \leq 1 \}. 
\end{align*}
Using Chebychev's inequality we have
\begin{equation*}
	| \Omega \setminus \widetilde{\Omega}_n | \leq \int_\Omega \| \nabla_{y_1} \mathcal{U}_1 u_n (x, \cdot) \|^2_{L^2 (Q_1; \R^{N \times M})} \dd x = \| \nabla_{y_1} \mathcal{U}_1 u_n \|^2_{L^2 (\Omega; L^2 (Q_1; \R^{N \times M}))} \leq c \frac{\delta_n^2}{\varepsilon_n} \to 0,
\end{equation*}
where, in the last inequality, we used \eqref{eq:chain_rule_psi1}.
For $\widetilde{Q}_{1,n} (x)$, using again Chebychev's inequality we have that
\begin{equation*}
	| Q_1 \setminus \widetilde{Q}_{1,n} (x) | \leq \int_{Q_1} \| \nabla_{y_2} \mathcal{U}_2 u_n (x, y_1, \cdot) \|^2_{L^2 (Q_2;\R^{N \times M})} \dd y_1 = \| \nabla_{y_2} \mathcal{U}_2 u_n (x, \cdot, \cdot) \|^2_{L^2 (Q_1 \times Q_2; \R^{N \times M})}. 
\end{equation*}
Note that $$ | Q_1 \setminus \widetilde{Q}_{1,n} (x) | \leq |Q_1 | = 1, $$
and that
\begin{align*}
	\int_\Omega | Q_1 \setminus \widetilde{Q}_{1,n} (x) | \dd x &\leq \int_\Omega \| \nabla_{y_2} \mathcal{U}_2 u_n (x, \cdot, \cdot) \|^2_{L^2 (Q_1 \times Q_2; \R^{N \times M})} \dd x\\
	&= \| \nabla_{y_2} \mathcal{U}_2 u_n \|^2_{L^2 (\Omega; L^2 (Q_1 \times Q_2; \R^{N \times M}))} \leq c \frac{\eta^2_n}{\varepsilon_n} \to 0,
\end{align*}
where in the last inequality we used \eqref{eq:chain_rule_psi2}. From this we can conclude that
\begin{equation*}
	| Q_1 \setminus \widetilde{Q}_{1,n} (x) | \to 0 \qquad \text{for a.e. } x \in \Omega.
\end{equation*}

Let us also define a variation of the auxiliary problem. Take $z \in \R^M$ and $K \subset Q_1$ a compact set. Then we define $W^\xi_n (z, K)$ as 
\begin{equation*}
    W^\xi_n (z, K) \coloneqq \inf_{\psi_1 \in \mathcal{A}_1^\xi} \inf_{\psi_2 \in \mathcal{A}_2^\xi} \int_K \int_{Q_2} W(g_n(y_1,y_2), y_2, z + \psi_1(y_1) + \psi_2 (y_1, y_2)) \dd y_2 \dd y_1,
\end{equation*}
where $\mathcal{A}_1^\xi$ and $\mathcal{A}_2^\xi$ are defined as in Definition \ref{def:auxiliary}. Note that if $K = Q_1$, then $W^\xi_n (z, Q_1) = W^\xi_n (z)$.

Thanks to the non-negativity of $W$, together with Lemma \ref{lemma:needed}, we have
\begin{equation*}
    \int_\Omega W \Bigg( \!\bigg\{\! \frac{x}{\delta_n} \! \bigg\}_{Q_1}\!, \bigg\{\! \frac{\delta_n}{\eta_n}\!\bigg\{ \!\frac{x}{\delta_n} \!\bigg\}_{Q_1} \!\bigg\}_{Q_2}\!, u_n(x) \!\Bigg) \dd x \geq \int_{\widetilde{\Omega}_n} \int_{\widetilde{Q}_{1,n}(x)} \int_{Q_2} W(g_n(y_1,y_2), y_2, \mathcal{U}_2 u_n) \dd y_2 \dd y_1 \dd x.
\end{equation*}
We rewrite this last term as
\begin{equation*}
    \int_{\widetilde{\Omega}_n} \int_{\widetilde{Q}_{1,n}(x)} \int_{Q_2} W (g_n(y_1,y_2), y_2, u_n + [\mathcal{U}_1 u_n - u_n] + [\mathcal{U}_2 u_n - \mathcal{U}_1 u_n] ) \dd y_1 \dd y_2 \dd x.
\end{equation*}
Let us define the following functions:
$$ \psi_1 (y_1; x) \coloneqq \mathcal{U}_1 u_n (x, y_1) - u_n (x), \qquad \psi_2 (y_1, y_2; x) \coloneqq \mathcal{U}_2 u_n (x, y_1, y_2) - \mathcal{U}_1 u_n (x, y_1). $$ 
Thanks to Theorem \ref{thm:energy_estimates}, we know that
\begin{equation}
    \| \psi_1 \|_{L^2 (\Omega; L^2 (Q_1; \R^M))} \leq C \sqrt{\delta_n}, \qquad \| \psi_2 \|_{L^2 (\Omega; L^2 (Q_1; L^2( Q_2; \R^M)))} \leq C \sqrt{\frac{\eta_n}{\delta_n}} ,\label{eq:liminf_fin1}
\end{equation}
and since, by definition of the unfolding operators and uniform boundedness of $u_n$, we know that
$$ \| \psi_1 \|_{L^\infty (\Omega \times Q_1; \R^M)} \leq 2M, \qquad \| \psi_2 \|_{L^\infty (\Omega \times Q_1 \times Q_2; \R^M)} \leq 2M, $$
we also have that
\begin{equation}
    \| \psi_1 \|_{L^2 (Q_1; \R^M)} \leq C \sqrt{\delta_n}, \qquad \| \psi_2 \|_{L^2 (Q_1; L^2 (Q_2; \R^M))} \leq C \sqrt{\frac{\eta_n}{\delta_n}}.\label{eq:liminf_fin2}
\end{equation}
Thanks to the definitions of $\widetilde{\Omega}_n$ and $\widetilde{Q}_{1,n} (x)$, we know that for $x \in \widetilde{\Omega}_n$ and $y_1 \in \widetilde{Q}_{1,n}$ it holds
$$ \| \nabla_{y_1} \psi_1 (x, \cdot) \|_{L^2 (Q_1; \R^{N \times M})} \leq 1, \qquad \| \nabla_{y_2} \psi_2 (x, y_1, \cdot) \|_{L^2 (Q_2; \R^{N \times M})} \leq 1. $$
We now need to prove that $\psi_1 \in \mathcal{A}_1^\xi$ and $\psi_2 \in \mathcal{A}_2^\xi$.
Fix $\xi > 0$. Then, take $n$ large enough such that
$$ \max \left\{ \sqrt{\delta_n}, \sqrt{\frac{\eta_n}{\delta_n}} \right\} \leq \xi. $$
This is possible since all these quantities are infinitesimal for $n \to \infty$. Thus, \eqref{eq:liminf_fin1} and \eqref{eq:liminf_fin2} yield that $\psi_1$ and $\psi_2$ are admissible functions for the problem defining $W^\xi_n (u_n(x), \widetilde{Q}_{1,n} (x))$.

We can therefore write
\begin{align*}
    \int_{\widetilde{\Omega}_n} \int_{\widetilde{Q}_{1,n}(x)} \int_{Q_2} W(g_n(y_1,y_2), y_2, &\mathcal{U}_2 u_n) \dd y_2 \dd y_1 \dd x \\
    &\geq \int_{\widetilde{\Omega}_n} W^\xi_n (u_n(x), \widetilde{Q}_{1,n}(x)) \dd x \\
    &= \int_\Omega W^\xi_n (u_n(x), \widetilde{Q}_{1,n}(x)) \dd x - \int_{\Omega \setminus \widetilde{\Omega}_n} W^\xi_n (u_n(x), \widetilde{Q}_{1,n}(x)) \dd x
\end{align*}
We claim now that 
\begin{equation*}
    \lim_{n \to \infty} \int_{\Omega \setminus \widetilde{\Omega}_n} W^\xi_n (u_n(x), \widetilde{Q}_{1,n}(x)) \dd x = 0.
\end{equation*}
This is easy to see, since by non-negativity of $W$ and Assumption \ref{itm:1_H4} we get that
\begin{equation*}
    \int_{\Omega \setminus \widetilde{\Omega}_n} W^\xi_n (u_n(x), \widetilde{Q}_{1,n}(x)) \dd x \leq \int_{\Omega \setminus \widetilde{\Omega}_n} W^\xi_n (u_n(x)) \dd x \leq C_{2M} |\Omega \setminus \widetilde{\Omega}_n| \to 0.
\end{equation*}
Therefore we are left with 
\begin{equation*}
    \liminf_{n \to \infty} \int_\Omega W_n \Bigg( \!\bigg\{\! \frac{x}{\delta_n} \! \bigg\}_{Q_1}\!, \bigg\{\! \frac{\delta_n}{\eta_n}\!\bigg\{ \!\frac{x}{\delta_n} \!\bigg\}_{Q_1} \!\bigg\}_{Q_2}\!, u_n(x) \!\Bigg) \dd x \geq \liminf_{n \to \infty} \int_\Omega W^\xi_n (u_n(x), \widetilde{Q}_{1,n}(x)) \dd x.
\end{equation*}
Take now $\psi_1$ and $\psi_2$ as minimizing functions for the problem defining $W^\xi_n (u_n(x), \widetilde{Q}_{1,n}(x))$, and define 
\begin{gather*}
    \widetilde{\psi}_2 (x, y_1, y_2) \coloneqq \begin{cases}
        0 \quad &y_1 \in Q_1 \setminus \widetilde{Q}_{1,n} (x), \\
        \psi_2 (x,y_1,y_2) \quad &y_1 \in \widetilde{Q}_{1,n}(x).
    \end{cases}
\end{gather*}
Note that $\psi_1$ and $\widetilde{\psi}_2$ are admissible competitors for the problem defining $W^\xi_n (u_n(x))$.

Using these modified competitors, we have
\begin{align*}
    \int_\Omega &W^\xi_n (u_n(x), \widetilde{Q}_{1,n}(x)) \dd x \\
    &= \int_\Omega \int_{\widetilde{Q}_{1,n}(x)} \int_{Q_2} W(g_n(y_1,y_2), y_2, u_n(x) + \psi_1(x,y_1) + \psi_2(x, y_1, y_2)) \dd y_2 \dd y_1 \dd x \\
    &= \int_\Omega \int_{Q_1} \int_{Q_2} W(g_n(y_1,y_2), y_2, u_n(x) + \psi_1(x,y_1) + \widetilde{\psi}_2 (x,y_1,y_2)) \dd y_2 \dd y_1 \dd x \\
    &\qquad \qquad \qquad\qquad- \int_\Omega \int_{Q_1 \setminus \widetilde{Q}_{1,n}(x)} \int_{Q_2} W(g_n(y_1,y_2), y_2, u_n(x) + \psi_1(x,y_1)) \dd y_2 \dd y_1 \dd x.
\end{align*}

Our claim is that
\begin{equation*}
    \lim_{n \to \infty} \int_\Omega \int_{Q_1 \setminus \widetilde{Q}_{1,n}(x)} \int_{Q_2} W(g_n(y_1,y_2), y_2, u_n(x) + \psi_1(x,y_1)) \dd y_2 \dd y_1 \dd x = 0.
\end{equation*}
This is straightforward using Assumption \ref{itm:1_H4}, the boundedness of $g_n(y_1,y_2)$ and the boundedness of $\psi_1$, which imply
\begin{equation*}
    \int_\Omega \int_{Q_1 \setminus \widetilde{Q}_{1,n}(x)} \int_{Q_2} W(g_n(y_1,y_2), y_2, u_n(x) + \psi_1(x,y_1)) \dd y_2 \dd y_1 \dd x \leq C |Q_1 \setminus \widetilde{Q}_{1,n}(x)| \to 0.
\end{equation*}
Therefore, since $\psi_1$ and $\widetilde{\psi}_2$ are admissible competitors, we can further lower the energy by
\begin{align*}
    \liminf_{n \to \infty} \int_\Omega W \Bigg( \!\bigg\{\! \frac{x}{\delta_n} \! \bigg\}_{Q_1}\!, \bigg\{\! \frac{\delta_n}{\eta_n}\!\bigg\{ \!\frac{x}{\delta_n} \!\bigg\}_{Q_1} \!\bigg\}_{Q_2}\!, u_n(x) \!\Bigg) \dd x &\geq \liminf_{n \to \infty} \int_\Omega W^\xi_n (u_n(x), \widetilde{Q}_{1,n}(x)) \dd x\\
    &\geq \liminf_{n \to \infty} \int_\Omega W^\xi_n (u_n(x)) \dd x.
\end{align*}

Using this last inequality, together with Young's inequality, we get
\begin{align*}
    \liminf_{n \to \infty} F_n^{(1)} (u_n) &= \liminf_{n \to \infty} \int_\Omega \left[ \frac{1}{\varepsilon_n} W \Bigg( \!\bigg\{\! \frac{x}{\delta_n} \! \bigg\}_{Q_1}\!, \bigg\{\! \frac{\delta_n}{\eta_n}\!\bigg\{ \!\frac{x}{\delta_n} \!\bigg\}_{Q_1} \!\bigg\}_{Q_2}\!, u_n(x) \!\Bigg) + \varepsilon_n | \nabla u_n(x) |^2 \right] \dd x \\
    &\geq \liminf_{n \to \infty} \int_\Omega \left[ \frac{1}{\varepsilon_n} W^\xi_n (u_n(x)) + \varepsilon_n | \nabla u_n(x) |^2 \right] \dd x \\
    &\geq \liminf_{n \to \infty} \int_\Omega 2 \sqrt{W^\xi_n (u_n(x))} |\nabla u_n(x)| \dd x,
\end{align*}
which was our initial claim.

\textbf{Step 3:} We now want to prove that
\begin{equation}\label{eq:liminf_step3}
    \liminf_{n \to \infty} \int_\Omega 2 \sqrt{W^\xi_n (u_n))} | \nabla u_n | \dd x \geq \sigma^\xi \, \mathrm{Per}(\{u=a\}; \Omega),
\end{equation}
where $\sigma^\xi$ is defined in \eqref{eq:sigma_xi}.

It is a well-known fact that in the case of a classic double-well potential, the functional in \eqref{eq:liminf_young_ineq} is bounded below by the perimeter functional (see \cite{modica1987gradient}). We briefly recall now the proof in \cite[Theorem 3.4]{fonseca1989gradient}, to show that nothing changes in the case of potential $W_n^\xi$ with compact wells (see \eqref{eq:gamma_limit} and \eqref{eq:sigma_gamma_limit}).\\
By compactness, we know that $u_n \to u_0 \in \mathrm{BV}(\Omega; \{ a,b \})$ in $L^1 (\Omega; \R^M)$. Let us define the auxiliary function $\varphi^\xi_n \colon \R^M \to \R$ as
$$ \varphi^\xi_n (z) \coloneqq \inf \left\{ \int_{-1}^1 2 \sqrt{W^\xi_n (\gamma(t))} | \gamma'(t)| \dd t : \gamma \in \mathrm{Lip}_{\mathcal{Z}^\xi} ([-1,1]; \R^M), \gamma(-1) = a, \gamma(1) = z \right\}. $$
We prove now that this function is Lipschitz continuous. Take $z_1, z_2 \in \R^M$, and take $\gamma \in \mathrm{Lip}_{\mathcal{Z}^\xi} ([-1,1]; \R^M)$ such that $\gamma(-1) = a$ and $\gamma(1) = z_1$. We define now a curve $\gamma_0 \in \mathrm{Lip}_{\mathcal{Z}^\xi}([-1,1]; \R^M)$, such that $\gamma_0 (-1) = a$, $\gamma_0(1) = z_2$. This is clearly an admissible competitor in the infimum problem defining $\varphi^\xi_n (z_2)$. Moreover, we also require that $\gamma_0(0) = z_1$ and that for $t\in [0,1]$, the image of $\gamma([0,1])$ is a segment between $z_1$ and $z_2$, while for $t \in [-1,0]$ the curve is given by a reparameterization of $\gamma$. In formulas, this gives
\begin{gather*}
    \gamma_0 (t) \coloneqq \begin{cases}
        \gamma( 2t+1) \quad &t\in [-1, 0], \\
        z_1 + t (z_2 - z_1 ) \quad &t \in (0,1].
    \end{cases}
\end{gather*}
Therefore we get that 
\begin{align*}
    \varphi^\xi_n (z_2) &\leq \int_{-1}^1 2 \sqrt{W^\xi_n (\gamma_0 (t))} | \gamma'_0 (t)| \dd t \\
    &= \int_{-1}^0 2 \sqrt{W^\xi_n (\gamma (2t+1))} | 2\gamma' (2t+1)| \dd t + \int_0^1 \sqrt{W^\xi_n (z_1 + t(z_2 - z_1))} |z_2 - z_1 | \dd t \\
    &\leq \int_{-1}^1 2 \sqrt{W^\xi_n (\gamma (t))} | \gamma' (t)| \dd t + \left( \sup_{t \in [0,1]} 2 \sqrt{W^\xi_n (z_1 + t(z_2 - z_1))} \right) |z_2 - z_1|.
\end{align*}
The term in parenthesis is always finite since $W^\xi_n$ is continuous.
Thanks to assumption \ref{itm:1_H4}, taking the infimum over all $\gamma \in \mathrm{Lip}_{\mathcal{Z}^\xi} ([-1,1]; \R^M)$ such that $\gamma(-1) = a$ and $\gamma(1) = z_1$, we get
$$ \varphi^\xi_n (z_2) \leq \varphi^\xi_n (z_1) + C |z_2 - z_1|. $$
Writing the same procedure with $z_1 $ and $z_2$ swapped we find
$$ | \varphi^\xi_n (z_2) - \varphi^\xi_n (z_1) | \leq C |z_2 - z_1|, $$
therefore proving that $\varphi^\xi_n$ is Lipschitz continuous, with Lipschitz constant given by
$$ L_n^\xi \coloneqq \sup_{t \in [0,1]} 2 \sqrt{W_n^\xi (z_1 + t (z_2 - z_1))}.$$
Since $W_n^\xi(z) \to W^\xi(z)$ for all $z \in \R^M$, we have 
$$ \lim_{n \to \infty} L_n^\xi = L^\xi \coloneqq \sup_{t \in [0,1]} 2 \sqrt{W^\xi (z_1 + t (z_2 - z_1))}. $$
Moreover, thanks to assumption \ref{itm:1_H4}, we also get that for every $z_1, z_2 \in B_M(0)$, the Lipschitz constants $L_n^\xi$ and $L^\xi$ are uniformly bounded.\\
Since $\varphi_n^\xi$ is Lipschitz continuous, Rademacher's theorem then implies that it is differentiable a.e. in $\R^M$. Take a differentiability point $z_0 \in \R^M$. Using the previous computations, we get
$$ \lim_{z \to z_0} \frac{| \varphi^\xi_n (z) - \varphi^\xi_n (z_0) |}{|z - z_0|} \leq \lim_{z \to z_0} \sup_{t \in [0,1]} 2 \sqrt{W^\xi_n (z_0 + t(z - z_0))} = 2 \sqrt{W^\xi_n (z_0)}, $$
where in the last step we used again the continuity of $W^\xi_n$. Thus, this implies 
\begin{equation}\label{eq:liminf_final}
    | \nabla (\varphi^\xi_n \circ u_n ) (x)| \leq 2 \sqrt{W^\xi_n (u_n (x))} | \nabla u_n (x)| \qquad \text{for a.e. } x \in \Omega.
\end{equation}
We will now define the constant
$$ \sigma^\xi \coloneqq \varphi^\xi (b) = \inf \left\{ \int_{-1}^1 2 \sqrt{W^\xi (\gamma(t))} | \gamma'(t)| \dd t : \gamma \in \mathrm{Lip}_{\mathcal{Z}^\xi} ([-1,1]; \R^M), \gamma(-1) = a, \gamma(1) = b \right\}. $$
Since $\varphi^\xi_n$ and $\varphi^\xi$ are Lipschitz continuous with uniformly bounded Lipschitz constants, $\varphi_n^\xi (z) \to \varphi^\xi (z)$ pointwise for all $z$, and $u_n \to u_0 $ strongly in $L^1 $, we also have $\varphi^\xi_n \circ u_n \to \varphi^\xi \circ u_0$ strongly in $L^1$. We note that
$$ \varphi^\xi \circ u_0 (x) = \sigma^\xi \mathbbm{1}_B (x), $$
since $u_0 \in \mathrm{BV} (\Omega; \{ a,b\})$. To conclude, using lower semi-continuity of the total variation and \eqref{eq:liminf_final}, we get
\begin{align*}
    \liminf_{n \to \infty} \int_\Omega 2 \sqrt{W^\xi_n (u_n)} |\nabla u_n|\dd x &\geq \liminf_{n \to \infty} \int_\Omega | \nabla (\varphi^\xi_n \circ u_n) | \\
    &\geq \int_\Omega | \nabla (\varphi^\xi \circ u_0) | \\
    &= \sigma^\xi \, \mathrm{Per} (\{ u = b \}; \Omega) = \sigma^\xi \, \mathrm{Per} (\{ u = a \}; \Omega),
\end{align*}
where the last equality holds since we are dealing with just two phases.
This proves the claim.\\

\textbf{Step 4: Conclusion.} Using \eqref{eq:liminf_young_ineq} and \eqref{eq:liminf_step3}, we get that
\[
    \liminf_{n \in \N} F_n^{(1)} (u_n) \geq \sigma^\xi \, \mathrm{Per} (\{ u = a \}; \Omega).
\]
Taking the limit as $\xi\to0$, and using Proposition \ref{prop:lim_sigma_xi}, we get
\[
 \liminf_{n \in \N} F_n^{(1)} (u_n) \geq \sigma^{\mathrm{h}} \, \mathrm{Per} (\{ u = a \}; \Omega).
\]
This concludes the proof.

\end{proof}

\section{Limsup inequality}\label{sec:limsup}

The goal of this section is to prove the following proposition.
\begin{proposition}\label{prop:limsup}
    Let $u \in \mathrm{BV}(\Omega; \{a,b\})$. Then, there exists a sequence $(u_n)_{n \in \N} \subset W^{1,2}(\Omega; \R^M)$, such that $u_n \to u$ strongly in $L^1(\Omega; \R^M)$ and 
    \begin{equation*}
        \limsup_{n \to \infty} F_n^{(1)} (u_n) \leq F_\infty^{(1)} (u).
    \end{equation*}
\end{proposition}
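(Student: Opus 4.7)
The plan is to carry out the classical Modica-Mortola recovery-sequence construction based on an almost-optimal profile for the homogenized geodesic problem defining $\sigma^{\mathrm{h}}$, and to reduce the oscillatory potential to its average $W^{\mathrm{h}}$ via the unfolding operator together with a uniform Lipschitz bound on the profile. By lower semicontinuity of $F^{(1)}_\infty$ in $L^1$ and density of polyhedral sets in finite-perimeter sets (with simultaneous $L^1$ and perimeter convergence), a diagonal argument reduces to the case where $E := \{u = a\}$ has polyhedral boundary. A partition-of-unity and gluing argument further reduces to the model problem of a single flat interface $\Pi = \{x \cdot \nu = 0\}$ inside an open set $A \Subset \R^N$; the contribution of the $(N-2)$-dimensional edges where faces meet is $\mathcal{H}^{N-1}$-negligible. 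Fix $\tau > 0$ and select a near-optimal $\gamma \in \mathrm{Lip}_\mathcal{Z}([-1,1]; \R^M)$ with $\gamma(\pm 1) \in \{a, b\}$; after an arc-length reparametrization one may assume $\gamma$ is $L$-Lipschitz on $[-1,1]$ and $|\gamma'(t)|^2 = 2W^{\mathrm{h}}(\gamma(t))$ a.e., so that Young's inequality is an equality. With $d(x) := x \cdot \nu$, set
\[
u_n(x) := \begin{cases} a & d(x) \leq -\varepsilon_n, \\ \gamma(d(x)/\varepsilon_n) & |d(x)| \leq \varepsilon_n, \\ b & d(x) \geq \varepsilon_n.\end{cases}
\]
Then $u_n \in W^{1,2}(A; \R^M)$ is $L/\varepsilon_n$-Lipschitz, bounded in a fixed ball $\overline{B(0, R_0)}$, and converges in $L^1(A; \R^M)$ to the target.

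The gradient term evaluates by Fubini in coordinates aligned with $\nu$:
\[
\int_A \varepsilon_n |\nabla u_n|^2 \dd x = \mathcal{H}^{N-1}(\Pi \cap A)\int_{-1}^1 |\gamma'(t)|^2 \dd t + o(1).
\]
The nontrivial step is to show
\[
\frac{1}{\varepsilon_n} \int_A W\!\left(\tfrac{x}{\delta_n}, \tfrac{x}{\eta_n}, u_n(x)\right)\dd x = \frac{1}{\varepsilon_n} \int_A W^{\mathrm{h}}(u_n(x))\dd x + o(1),
\]
after which combining the two contributions yields $\limsup_n F_n^{(1)}(u_n) \leq (\sigma^{\mathrm{h}} + \tau)\,\mathcal{H}^{N-1}(\Pi \cap A)$ and $\tau \to 0$ along a diagonal concludes. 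For the displayed identity I would use the unfolding rewriting
\[
\int_A W\!\left(\tfrac{x}{\delta_n}, \tfrac{x}{\eta_n}, u_n\right)\dd x = \int_A \int_{Q_1 \times Q_2} W(y_1, y_2, \mathcal{U}_2 u_n)\, \dd y_1\, \dd y_2\, \dd x + \mathcal{R}_n,
\]
where the boundary remainder $\mathcal{R}_n$ is supported in the union of the $\delta_n$-boundary layer and an $(\eta_n/\delta_n)$-fraction of each $\delta_n$-cube; because $u_n \equiv a$ or $b$ and hence $W \equiv 0$ outside the $\varepsilon_n$-tube of $\Pi$, one has $|\mathcal{R}_n| \leq C(\delta_n + \varepsilon_n \eta_n/\delta_n)$, which is $o(\varepsilon_n)$ under the scale assumptions. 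For the main term, the Lipschitz bound on $u_n$ combined with $|\mathcal{G}_2(x, y_1, y_2) - x| \leq c\delta_n$ gives
\[
\|\mathcal{U}_2 u_n - u_n\|_{L^\infty(A \times Q_1 \times Q_2)} \leq \tfrac{c L \delta_n}{\varepsilon_n} \longrightarrow 0,
\]
and since $W(y_1, y_2, \cdot)$ is continuous on $\overline{B(0, R_0)}$ for a.e.\ $(y_1, y_2)$ with a pointwise modulus of continuity $\omega_{y_1, y_2}$ and uniform upper bound $C_{R_0}$ from \ref{itm:1_H4}, dominated convergence on $Q_1 \times Q_2$ gives $\int_{Q_1 \times Q_2} \omega_{y_1, y_2}(cL\delta_n/\varepsilon_n)\, \dd y_1\, \dd y_2 \to 0$. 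Combined with the exact identity $\int_{Q_1 \times Q_2} W(y_1, y_2, u_n(x))\, \dd y_1\, \dd y_2 = W^{\mathrm{h}}(u_n(x))$ (after normalizing $|Q_1|\,|Q_2|$) and the fact that the effective support has volume $O(\varepsilon_n)$, this yields the displayed asymptotics.

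I expect the main subtlety to be the last dominated convergence step with a non-uniform modulus of continuity: since $W$ is only Carathéodory, \ref{itm:1_H1} does not provide a modulus of continuity in $z$ that is uniform in $(y_1, y_2)$, so there is no single $\omega$ controlling $|W(y_1, y_2, \mathcal{U}_2 u_n) - W(y_1, y_2, u_n)|$ uniformly on $Q_1 \times Q_2$. The argument survives only because the Lipschitz reparametrization of $\gamma$ delivers a quantitative $L^\infty$-rate $\delta_n/\varepsilon_n \to 0$ for $\mathcal{U}_2 u_n - u_n$, and the factor $1/\varepsilon_n$ in front of the potential integral is exactly compensated by the $O(\varepsilon_n)$ volume of its effective support. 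A second delicate point is the reparametrization itself, which must truncate $\gamma$ on the neighborhoods where it sits arbitrarily close to $\{a,b\}$ without inflating the integral $\int 2\sqrt{W^{\mathrm{h}}(\gamma)}|\gamma'|\,\dd t$ above $\sigma^{\mathrm{h}} + \tau$.
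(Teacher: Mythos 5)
Your overall strategy coincides with the paper's: build a one-dimensional transition profile from an almost-optimal curve for the geodesic problem defining $\sigma^{\mathrm{h}}$, rescale it to live in an $\varepsilon_n$-tube around the interface, and replace the oscillating potential by $W^{\mathrm{h}}$ using the Lipschitz bound $|\nabla u_n|\lesssim \varepsilon_n^{-1}$ together with the separation of scales, so that the pointwise discrepancy $|\mathcal{U}_2 u_n - u_n|\lesssim\delta_n/\varepsilon_n\to 0$ is small and the boundary/resonance cells contribute $o(\varepsilon_n)$. There are two genuine points of divergence worth flagging.

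\emph{Geometric reduction.} You reduce to polyhedral $\partial^* E$ and then to a single flat face via a partition of unity and gluing, with the $(N-2)$-dimensional edge set controlled separately. The paper instead approximates $E$ by open sets with $C^2$ boundary (Proposition~\ref{prop:set_approx}), builds $u_n$ directly on the full approximating interface via the signed distance, and uses the coarea formula together with \eqref{eq:hausdorff_approx} to pass from the tubular integral to $\mathcal{H}^{N-1}(\partial A\cap\Omega)$. Both routes are standard; the $C^2$/coarea route is cleaner here because the limiting surface density is a constant (the energy is isotropic), so there is no need to track a face normal $\nu$ through a gluing argument, and it avoids separately handling the edge set. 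Your polyhedral route would be the one to adopt in the regimes with anisotropic $\sigma(\nu)$.

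\emph{Reparametrization of the profile.} The sentence ``after an arc-length reparametrization one may assume $\gamma$ is $L$-Lipschitz on $[-1,1]$ and $|\gamma'(t)|^2=2W^{\mathrm h}(\gamma(t))$ a.e.'' conflates two incompatible normalizations: arc-length gives $|\gamma'|\equiv\mathrm{const}$, whereas the equipartition condition (with the right constant, $|\gamma'|^2=W^{\mathrm h}(\gamma)$ so that Young's inequality $W^{\mathrm h}+|\gamma'|^2\ge 2\sqrt{W^{\mathrm h}}\,|\gamma'|$ is an equality) forces $\gamma'(\pm1)=0$. More seriously, the equipartition ODE is singular at the endpoints and, for typical potentials (quadratic at the wells), the transition takes infinite parameter time, so no Lipschitz reparametrization on $[-1,1]$ with exact equipartition and $\gamma(\pm1)\in\{a,b\}$ exists. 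You correctly flag this as a delicate truncation issue, but the proposal does not resolve it. The paper resolves it via the classical Modica--Mortola device in Lemma~\ref{lemma:modica_mortola}: one adds a small $\lambda>0$ to $W^{\mathrm h}$ in the ODE for the reparametrization $g_n$, which makes the profile reach $a,b$ in finite time $\tau_n\sim\varepsilon_n$ with $|g_n'|\le C/\varepsilon_n$, at the cost of an explicit excess $2\sqrt{\lambda}\,L(\gamma)$ which is absorbed into the slack $\varsigma$; the quantitative bound $|g_n'|\le C/\varepsilon_n$ is also exactly what produces the modulus $\omega_\gamma(C\delta_n/\varepsilon_n)$ used to estimate $W(\cdot,\cdot,u_n)-W^{\mathrm h}(u_n)$. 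If you incorporate that lemma (or an equivalent truncation with a quantified energy penalty), the rest of your argument --- including the dominated-convergence step with the non-uniform, $(y_1,y_2)$-dependent modulus of continuity of $W$, which you correctly identify as the main subtlety --- goes through and matches the paper's Section~\ref{sec:limsup}.
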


In order to prove this proposition, we need some technical results, the first of which is an approximation result for sets (see \cite[Lemma 1.15]{Leo_lecture} and \cite[Lemma 3.1]{baldo1990minimal}).
\begin{proposition}\label{prop:set_approx}
    Let $E \subset \Omega$ be a set with finite perimeter. Then, there exists a sequence of sets $(E_n)_n$ with $E_n \in \R^N$ such that
    \begin{itemize}
        \item $\partial E_n \cap \Omega$ is of class $C^2$;
        \item $\mathcal{H}^{N-1} (\partial E_n \cap \partial \Omega) = 0$;
        \item $\mathbbm{1}_{E_n} \to \mathbbm{1}_E$ in $L^1 (\Omega)$;
        \item $\displaystyle \lim_{n \to \infty} F_\infty^{(1)} (\mathbbm{1}_{E_n}) = F_\infty^{(1)} (\mathbbm{1}_{E})$.
    \end{itemize}
\end{proposition}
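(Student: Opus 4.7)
The plan is to mollify $\mathbbm{1}_E$ and then extract a well-chosen level set of the mollification. First I would extend $\mathbbm{1}_E$ from $\Omega$ to an open set $\Omega' \Supset \overline{\Omega}$, arranging the extension so that $|D \mathbbm{1}_E|(\partial \Omega)=0$; since $\Omega$ is Lipschitz, this can be done, e.g., by a local reflection across $\partial\Omega$ or by a transverse translation of $E$ in $\Omega' \setminus \Omega$. Letting $\rho_\varepsilon$ be a family of radial mollifiers supported in $B(0,\varepsilon)$, I set $v_\varepsilon \coloneqq \mathbbm{1}_E * \rho_\varepsilon \in C^\infty(\R^N)$, so that $v_\varepsilon(\R^N) \subset [0,1]$ and, by standard BV-mollification estimates,
\[
v_\varepsilon \to \mathbbm{1}_E \quad \text{in } L^1(\Omega), \qquad \int_\Omega |\nabla v_\varepsilon| \dd x \to |D\mathbbm{1}_E|(\Omega) = \mathrm{Per}(E;\Omega).
\]

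Next I would apply the coarea formula and Sard's theorem to extract good level sets of $v_\varepsilon$. The coarea formula gives
\[
\int_0^1 \mathrm{Per}(\{v_\varepsilon > t\}; \Omega) \dd t = \int_\Omega |\nabla v_\varepsilon| \dd x,
\]
while the layer-cake identity yields
\[
\int_0^1 \mathcal{L}^N\bigl( (\{v_\varepsilon > t\} \triangle E) \cap \Omega \bigr) \dd t = \int_\Omega |v_\varepsilon - \mathbbm{1}_E| \dd x \to 0.
\]
By Sard's theorem, $\mathcal{L}^1$-a.e.\ $t \in (0,1)$ is a regular value of $v_\varepsilon$, and for such $t$ the level set $\{v_\varepsilon = t\}$ is a $C^\infty$ (in particular $C^2$) hypersurface. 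Moreover, since the level sets $\{v_\varepsilon = t\}$ are pairwise disjoint and $\partial\Omega$ has finite $\mathcal{H}^{N-1}$-measure, only countably many values of $t$ satisfy $\mathcal{H}^{N-1}(\{v_\varepsilon = t\} \cap \partial\Omega) > 0$. A joint pigeonhole argument applied to the two displays above then produces, for each small $\varepsilon > 0$, a regular value $t_\varepsilon \in (0,1)$ of $v_\varepsilon$ such that $\mathcal{H}^{N-1}(\{v_\varepsilon = t_\varepsilon\} \cap \partial\Omega) = 0$ and
\[
\mathrm{Per}(\{v_\varepsilon > t_\varepsilon\}; \Omega) \leq (1 + o(1)) \int_\Omega |\nabla v_\varepsilon| \dd x, \qquad \mathcal{L}^N\bigl((\{v_\varepsilon > t_\varepsilon\} \triangle E) \cap \Omega\bigr) = o(1).
\]

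I would then set $E_n \coloneqq \{v_{\varepsilon_n} > t_{\varepsilon_n}\}$ along any sequence $\varepsilon_n \downarrow 0$: the first two bullets hold by construction, the third follows from the second estimate above, and the fourth follows by combining lower semicontinuity of the perimeter (which yields $\liminf_n \mathrm{Per}(E_n;\Omega) \geq \mathrm{Per}(E;\Omega)$) with the pigeonhole bound and $\int_\Omega |\nabla v_{\varepsilon_n}| \dd x \to \mathrm{Per}(E;\Omega)$ (which gives the matching limsup), so that $\mathrm{Per}(E_n;\Omega) \to \mathrm{Per}(E;\Omega)$ and hence $F_\infty^{(1)}(\mathbbm{1}_{E_n}) \to F_\infty^{(1)}(\mathbbm{1}_E)$. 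The main obstacle is the initial extension step: I need to control the mollification near $\partial \Omega$ so that no spurious perimeter is created there in the limit, which is arranged by an extension compatible with the Lipschitz structure of $\partial\Omega$; all subsequent points are routine consequences of the coarea formula, Sard's theorem, and pigeonhole.
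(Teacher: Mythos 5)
The paper does not prove this proposition itself but cites \cite[Lemma 3.1]{baldo1990minimal} for it; your argument — extend, mollify, select a good level set via coarea and Sard, then conclude by lower semicontinuity plus the coarea bound — is precisely the Modica-style proof that Baldo's lemma uses. So your proposal is correct and takes essentially the same approach as the reference the paper relies on, modulo the routine diagonal argument needed to turn the pigeonhole bound into the exact limit of the perimeters.
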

Corollary to this proposition, we also need the following result about a geometric property of the tubular neighborhoods of a $C^2$ surface in $\R^N$.
\begin{lemma}
    Let $A \subset \R^N$ be an open bounded set with $C^2$ boundary, and let $\Omega \subset \R^N$ be an open bounded set with Lipschitz boundary such that $\mathcal{H}^{N-1} (\partial \Omega \cap \partial A) = 0$. Define the function $h \colon \R^N \to \R$ as
    \begin{gather*}
        h(x) \coloneqq \begin{cases}
            - \mathrm{dist}(x, \partial A) \quad &x \in A \\
            \mathrm{dist}(x, \partial A) \quad &x \notin A.
        \end{cases}
    \end{gather*}
    Then $h$ is Lipschitz continuous and $| \mathrm{D}h(x)|=1$ for a.e. $x \in \R^N$.\\
    Moreover, define $S_t \coloneqq \{ x \in \R^N : h(x) = t \}$. Then it holds
    \begin{equation}
        \lim_{t \to 0} \mathcal{H}^{N-1} (S_t \cap \Omega) = \mathcal{H}^{N-1} (\partial A \cap \Omega). \label{eq:hausdorff_approx}
    \end{equation}
\end{lemma}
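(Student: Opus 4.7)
The plan is to split the proof into the two independent assertions: the Lipschitz/Eikonal statement for $h$, and the convergence of the $(N-1)$-dimensional measures of the level sets $S_t\cap\Omega$.

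For the first part, Lipschitz continuity with constant $1$ is immediate from the triangle inequality for $\mathrm{dist}(\cdot,\partial A)$: on each of the regions $A$ and $\R^N\setminus\overline A$ one has $|h(x)-h(y)|\le |x-y|$, and across $\partial A$ one may insert any point on the segment joining $x$ and $y$ that meets $\partial A$ and use that $h$ vanishes there. The property $|Dh|=1$ a.e.\ is the classical Eikonal-type statement for signed distance functions: Rademacher's theorem gives differentiability a.e., the Lipschitz bound yields $|Dh|\le 1$ a.e., and since $\partial A$ is $C^2$ the nearest-point projection $\pi$ onto $\partial A$ is well defined in a tubular neighborhood, which together with the direct computation $h(x\pm s\,Dh(x)) = h(x)\pm s$ for small $s>0$ at a differentiability point $x\notin\partial A$ yields $|Dh|\ge 1$. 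Since $\partial A$ is $C^2$, it has zero Lebesgue measure and need not be analyzed separately.

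For the second part, the key tool is the tubular neighborhood theorem applied to $\partial A$. Since $\partial A$ is a compact $C^2$ hypersurface (or, more precisely, the intersection with any neighborhood of $\overline\Omega$ is), there exist $\tau>0$ and a $C^1$ diffeomorphism
\[
\Phi:\partial A\times(-\tau,\tau)\longrightarrow\{x\in\R^N:|h(x)|<\tau\},\qquad \Phi(y,t)\coloneqq y+t\,\nu(y),
\]
where $\nu$ is the outward unit normal to $\partial A$. By construction $h(\Phi(y,t))=t$, so $\Phi(\cdot,t)$ maps $\partial A$ diffeomorphically onto $S_t$. The tangential Jacobian $J_t(y)$ of $\Phi(\cdot,t)$, which is a polynomial in $t$ whose coefficients involve the principal curvatures of $\partial A$, satisfies $J_t(y)\to 1$ uniformly on $\partial A$ as $t\to 0$. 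By the area formula this already gives $\mathcal{H}^{N-1}(S_t)\to\mathcal{H}^{N-1}(\partial A)$.

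To obtain the statement \eqref{eq:hausdorff_approx} restricted to $\Omega$, I would write
\[
\mathcal{H}^{N-1}(S_t\cap\Omega)=\int_{\partial A}\mathbbm{1}_\Omega\!\left(\Phi(y,t)\right)J_t(y)\,\mathrm{d}\mathcal{H}^{N-1}(y),
\]
and pass to the limit via dominated convergence. The integrand is bounded uniformly in $t$ (since $J_t$ is uniformly bounded), and it converges pointwise to $\mathbbm{1}_\Omega(y)$ for every $y\in\partial A$ such that $y\in\Omega$ (where $\Phi(y,t)\in\Omega$ for $|t|$ small by openness) or $y\in\R^N\setminus\overline\Omega$ (where $\Phi(y,t)\notin\overline\Omega$ for $|t|$ small). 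The exceptional set where convergence can fail is contained in $\partial A\cap\partial\Omega$, which by hypothesis has $\mathcal{H}^{N-1}$-measure zero; this is the crucial use of the assumption $\mathcal{H}^{N-1}(\partial A\cap\partial\Omega)=0$ and is the only subtle point of the argument. The main obstacle is in fact precisely this boundary issue: without the hypothesis one could have mass concentrating on $\partial\Omega$ in the limit, so a careful measurability and pointwise-a.e.\ convergence argument for the indicator $\mathbbm{1}_\Omega\circ\Phi(\cdot,t)$ is what the proof hinges on.
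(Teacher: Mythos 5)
The paper states this lemma without giving a proof, treating it as a standard geometric fact; so there is no paper proof to compare against, and your proposal must be judged on its own.

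Your argument is correct and is the standard route: the tubular neighborhood theorem for a compact $C^2$ hypersurface gives the diffeomorphism $\Phi(y,t)=y+t\nu(y)$ onto $\{|h|<\tau\}$ with $h\circ\Phi(y,t)=t$; the area formula expresses $\mathcal{H}^{N-1}(S_t\cap\Omega)$ as $\int_{\partial A}\ca_\Omega(\Phi(y,t))J_t(y)\,\dhno(y)$; the tangential Jacobian $J_t$ is continuous on the compact set $\partial A\times[-\tau/2,\tau/2]$ and equal to $1$ at $t=0$, hence converges to $1$ uniformly; and dominated convergence applies because the indicator converges pointwise at every $y\in\partial A\setminus\partial\Omega$, while the exceptional set $\partial A\cap\partial\Omega$ is $\hno$-null by hypothesis. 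This is exactly where that hypothesis enters, as you correctly emphasize.

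One small point to tighten in the first part: the phrase ``$h(x\pm s\,Dh(x))=h(x)\pm s$'' is slightly circular, since it already implicitly assumes $|Dh(x)|=1$ (otherwise the displacement in space has length $s|Dh(x)|\neq s$). The clean argument is: at a differentiability point $x\notin\partial A$, pick a nearest point $y\in\partial A$ and set $\nu=(x-y)/|x-y|$; then $h(x+s\nu)-h(x)=s$ for small $s$, so the directional derivative of $h$ along $\nu$ equals $1$, which together with $|Dh(x)|\le 1$ from the Lipschitz bound forces $|Dh(x)|=1$. This is a cosmetic fix, not a gap in the idea.
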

The next result is the key point in the original procedure by Modica-Mortola, as it ensures the existence of a reparameterization of the optimal curve up to a small error (see \cite[proof of Proposition 2]{modica1987gradient}, \cite[Lemma 6.3]{CriGra}).
\begin{lemma}\label{lemma:modica_mortola}
    Fix $\lambda > 0$, $\varepsilon > 0$. Let $\gamma \in C^1 ([-1,1]; \R^M)$, with $\gamma (-1) = a$, $\gamma(1)=b$, and $\gamma'(s) \neq 0$ for all $s \in (-1,1)$. Then, there exist $\tau > 0$ and $C > 0$ with
    $$ C \varepsilon \leq \tau \leq \frac{\varepsilon}{\sqrt{\lambda}} \int_{-1}^1 | \gamma'(s) | \dd s, $$
    and $g \in C^1 ((-\tau,\tau); [-1,1])$ such that
    \begin{equation}\label{eq:reparametrization_diffeq}
        (g'(t))^2 = \frac{\lambda + W^\mathrm{h} (\gamma (g(t)))}{\varepsilon^2 | \gamma'(g(t)) |^2}
    \end{equation}
    for all $t \in (-\tau, \tau)$, $g(-\tau) = -1$, $g(\tau) = 1$, and
    \begin{equation}\label{eq:reparametrization_estim}
        \int_{-\tau}^\tau \left[ \frac{1}{\varepsilon} W^h (\gamma (g(t))) + \varepsilon | \gamma'(g(t)) |^2 (g'(t))^2 \right] \dd s \leq \int_{-1}^1 2 \sqrt{W^\mathrm{h} (\gamma (s))} |\gamma'(s)| \dd s + 2 \sqrt{\lambda} \int_{-1}^1 | \gamma'(s)| \dd s
    \end{equation}
\end{lemma}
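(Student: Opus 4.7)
The plan is a classical Modica--Mortola change-of-variables argument: I will exhibit $g$ as the inverse of an explicit $C^1$ primitive $t(s)$, then verify the ODE and the estimate by direct algebraic manipulation. Concretely, define
\begin{equation*}
t(s) \coloneqq \int_{-1}^{s}\frac{\varepsilon\,|\gamma'(r)|}{\sqrt{\lambda + W^{\mathrm{h}}(\gamma(r))}}\,\dd r \;-\; \tau,
\qquad
\tau \coloneqq \frac{1}{2}\int_{-1}^{1}\frac{\varepsilon\,|\gamma'(r)|}{\sqrt{\lambda + W^{\mathrm{h}}(\gamma(r))}}\,\dd r,
\end{equation*}
so that $t(\pm 1)=\pm\tau$. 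Since $W^{\mathrm{h}}$ is continuous (Proposition \ref{thm:prop_W_xi}), $\gamma \in C^{1}([-1,1];\R^M)$, and the denominator is bounded below by $\sqrt{\lambda}>0$, the integrand is continuous on $[-1,1]$ and strictly positive on $(-1,1)$. Hence $t$ is a continuous strictly increasing $C^1$-map $[-1,1]\to[-\tau,\tau]$, and its inverse $g \coloneqq t^{-1}$ is continuous on $[-\tau,\tau]$ with $g(\pm\tau)=\pm 1$, and $C^{1}$ on the open interval $(-\tau,\tau)$ by the inverse function theorem. Squaring the identity $g'(\tilde t) = 1/t'(g(\tilde t))$ yields exactly \eqref{eq:reparametrization_diffeq}.

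For the two-sided bound on $\tau$, I plan to use the elementary inequalities $\sqrt{\lambda}\leq \sqrt{\lambda+W^{\mathrm{h}}(\gamma(s))}\leq \sqrt{\lambda+M}$, where $M \coloneqq \max_{[-1,1]} W^{\mathrm{h}}\!\circ\gamma$ is finite by continuity. The upper bound $\tau \leq \tfrac{\varepsilon}{\sqrt{\lambda}}\int_{-1}^{1}|\gamma'(s)|\,\dd s$ follows at once (in fact with an extra factor $1/2$); for the lower bound, combining the upper estimate with $\int_{-1}^{1}|\gamma'|\,\dd s \geq |\gamma(1)-\gamma(-1)| = |b-a|>0$ gives $\tau \geq \varepsilon|b-a|/(2\sqrt{\lambda+M})$, producing the constant $C$ (which is allowed to depend on $\gamma$ and $\lambda$).

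The main algebraic content is the estimate \eqref{eq:reparametrization_estim}. Substituting \eqref{eq:reparametrization_diffeq} collapses the integrand to $\frac{1}{\varepsilon}\bigl(2\,W^{\mathrm{h}}(\gamma(g(t)))+\lambda\bigr)$; the change of variables $s=g(t)$, $\dd t = t'(s)\,\dd s = \varepsilon|\gamma'(s)|/\sqrt{\lambda+W^{\mathrm{h}}(\gamma(s))}\,\dd s$, then yields
\begin{equation*}
\int_{-\tau}^{\tau}\!\left[\frac{1}{\varepsilon}W^{\mathrm{h}}(\gamma(g)) + \varepsilon|\gamma'(g)|^{2}(g')^{2}\right]\!\dd t \;=\; \int_{-1}^{1}\frac{2\,W^{\mathrm{h}}(\gamma(s)) + \lambda}{\sqrt{\lambda + W^{\mathrm{h}}(\gamma(s))}}\,|\gamma'(s)|\,\dd s.
\end{equation*}
Finally, the pointwise inequality $\frac{2W^{\mathrm{h}}+\lambda}{\sqrt{\lambda+W^{\mathrm{h}}}}\leq 2\sqrt{\lambda+W^{\mathrm{h}}}\leq 2\sqrt{W^{\mathrm{h}}}+2\sqrt{\lambda}$ (the first step since $2W^{\mathrm{h}}+\lambda \leq 2(\lambda+W^{\mathrm{h}})$, the second by subadditivity of $\sqrt{\cdot}$) delivers the stated bound after integration against $|\gamma'(s)|\,\dd s$.

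The only genuine subtlety is the behavior at $s=\pm 1$: if $\gamma'(\pm 1)=0$, then $t'(\pm 1)=0$ and $g'$ blows up at $\pm\tau$, which is precisely why the statement asks for $g \in C^{1}$ only on the open interval, with the boundary values $g(\pm\tau)=\pm1$ given by continuous extension. This singularity causes no trouble in the energy estimate, since after the change of variables the integrand equals the continuous function $\frac{1}{\varepsilon}(2W^{\mathrm{h}}\!\circ\gamma\circ g + \lambda)$ on all of $[-\tau,\tau]$.
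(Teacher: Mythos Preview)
Your proof is correct and follows exactly the classical Modica--Mortola construction. The paper does not supply its own proof of this lemma; it merely states the result and refers to \cite[proof of Proposition~2]{modica1987gradient} and \cite[Lemma~6.3]{CriGra}, where precisely this argument---define $t(s)$ as the primitive of $\varepsilon|\gamma'|/\sqrt{\lambda+W^{\mathrm{h}}\!\circ\gamma}$, invert, and substitute---is carried out.
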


\begin{proof}[Proof of Proposition~\ref{prop:limsup}.]
\textbf{Step 0: Reduction to $C^2$ interfaces.} Let $A \coloneqq \{ x \in \Omega : u(x) = a \}$. Note that since $u \in \mathrm{BV} (\Omega; \{a,b\})$, it follows that $A$ is set of finite perimeter.
Using Proposition \ref{prop:set_approx} and a diagonalization argument, without loss of generality we prove the result for $u \in \mathrm{BV} (\Omega; \{a,b\})$ such that $\partial A \cap \Omega$ is of class $C^2$ and such that 
$$\mathcal{H}^{N-1}(\partial A \cap \partial \Omega) = 0. $$

\textbf{Step 1: Construction of recovery sequence.} Let $u \in \mathrm{BV} (\Omega; \{a,b\})$ as in Step 0. Fix $\varsigma > 0$, and let $\gamma \in C^1 ([-1,1]; \R^M)$ with $\gamma(-1)= a$, $\gamma(1)=b$, be such that
\begin{equation}\label{eq:limsup_quasigeodesic}
    \int_{-1}^1 2 \sqrt{W^\mathrm{h} (\gamma(s))} | \gamma'(s) | \dd s \leq \sigma_\mathrm{h} + \varsigma.
\end{equation}
Without loss of generality, we can always choose $\gamma$ such that $|\gamma'(s)| \geq c > 0$ for all $s \in [-1,1]$, for $c$ small enough. Therefore we can apply Lemma \ref{lemma:modica_mortola} with
\begin{equation*}
    \varepsilon \coloneqq \varepsilon_n, \qquad \lambda \coloneqq \left( \frac{\varsigma}{L(\gamma)} \right)^2,
\end{equation*}
where $L(\gamma)$ is the length of the curve $\gamma$, given by
$$ L(\gamma) \coloneqq \int_{-1}^1 | \gamma'(s) | \dd s < + \infty. $$
Therefore, for each $n \in \N$, we have $\tau_n$ and $g_n \in C^1 ((-\tau_n,\tau_n); [-1,1])$ such that \eqref{eq:reparametrization_diffeq} and \eqref{eq:reparametrization_estim} hold. Let now $\mathrm{dist}(\cdot, \partial A) \colon \R^N \to \R$ be the signed distance function from $\partial A$. Since $\partial A$ is of class $C^2$, by a classical result we know that $\mathrm{dist}(\cdot, \partial A)$ is of class $C^1$. Therefore,for every $n \in \N$ we define $u_n \colon \Omega \to \R^M$ as
\begin{gather}\label{eq:def_un}
    u_n (x) \coloneqq \begin{cases}
        a \qquad &\mathrm{dist}(x,\partial A) < - \tau_n, \\
        \gamma (g_n (\mathrm{dist}(x, \partial A))) \quad  &|\mathrm{dist}(x,\partial A) | \leq \tau_n, \\
        b \qquad &\mathrm{dist}(x,\partial A) > \tau_n.
    \end{cases}
\end{gather}
Observe that $c_1 \varepsilon_n \leq \tau_n \leq c_2 \varepsilon_n$, therefore $\tau_n \to 0$ as $n \to \infty$. This implies that $u_n \to u$ strongly in $L^1 (\Omega; \R^M)$. 

\textbf{Step 2: Domain subdivision.} We define 
\begin{equation}\label{eq:limsup_interface}
    A_n \coloneqq \{ x \in \Omega : | \mathrm{dist}(x,\partial A) | \leq \tau_n \} \implies |A_n| \leq c \tau_n.
\end{equation}
For each $n \in \N$, we partition $A_n$ into four disjoint sets. To do that, we first have to partition the set of generators at scale $\delta_n$ into two disjoint subsets:
\begin{align*}
    I_n &\coloneqq \left\{ \xi_1 \in G_1 : \delta_n \left( \xi_1 + Q_1 \right) \subset A_n \right\}, \\
    I_n^\mathrm{c} &\coloneqq \left\{ \xi_1 \in G_1 : \delta_n \left( \xi_1 + Q_1 \right) \cap \partial A_n \neq \emptyset \right\}.
\end{align*}
In this way we divided the periodicity cells at scale $\delta_n$ between those intersecting $\partial A_n$, and those who are strictly included in $A_n$. For each $\xi_1 \in I_n^\mathrm{c}$ we now define a partition of the generators at scale $\frac{\eta_n}{\delta_n}$:
\begin{align*}
    J_n(\xi_1) &\coloneqq \left\{ \xi_2 \in G_2 : \delta_n \left( \xi_1 + \frac{\eta_n}{\delta_n} \left( \xi_2 + Q_2 \right) \right) \subset A_n \right\}, \\
    J_n^\mathrm{c} (\xi_1) &\coloneqq \left\{ \xi_2 \in G_2 : \delta_n \left( \xi_1 + \frac{\eta_n}{\delta_n} \left( \xi_2 + Q_2 \right) \right) \cap \partial A_n \neq \emptyset \right\}.
\end{align*}
In this way we also divided the periodicity cells at scale $\frac{\eta_n}{\delta_n}$ between those intersecting $\partial A_n$, and those strictly included in $\delta_n (\xi_1 + Q_1)$, for $\xi_1 \in I_n^\mathrm{c}$.

With these generators we can partition $A_n$ into three disjoint sets, given by (see Figure \ref{fig:limsup_subdivision} for a simplified idea of the subdivision):
    \begin{align*}
        P_n &\coloneqq \bigcup_{\xi_1 \in I_n} \delta_n (\xi_1 + Q_1), \\
        Q_n &\coloneqq \bigcup_{\xi_1 \in I_n^\mathrm{c}} \bigcup_{\xi_2 \in J_n} \delta_n \left( \xi_1 + \frac{\eta_n}{\delta_n} \left( \xi_2 + Q_2 \right) \right), \\
        R_n &\coloneqq A_n \setminus (P_n \cup Q_n).
    \end{align*}
    \begin{figure}
        \centering
        \includegraphics[trim={5.8cm 2cm 8.5cm 1.7cm},clip,width=0.45\linewidth]{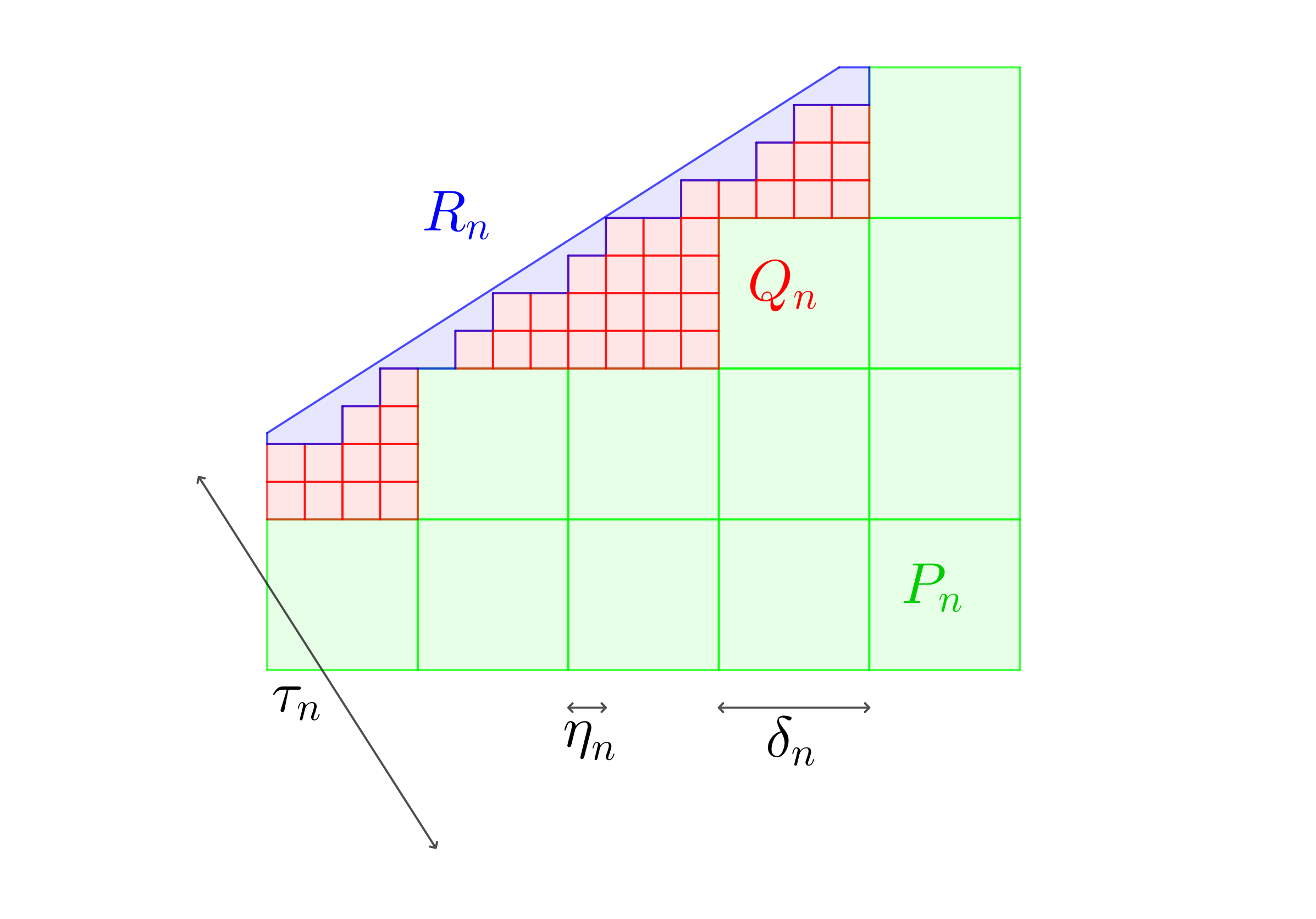}
        \caption{Subdivision of the interface layer}
        \label{fig:limsup_subdivision}
    \end{figure}
    Using \eqref{eq:limsup_interface}  we can easily deduce the following estimates.
    \begin{equation}\label{eq:limsup_estimates}
        | P_n | \leq C \tau_n, \qquad | Q_n | \leq C \delta_n.
    \end{equation}
    Moreover, since $A_n$ is a set with $C^2$ boundary, from the definition of $R_n$ we get
    \begin{equation}\label{eq:limsup_estimate_Rn}
        | R_n | \leq C \eta_n.
    \end{equation}
    \textbf{Step 3: Negligible contribution of small scales.}
    We want to prove that:
    \begin{equation}\label{eq:limsup_homdifference}
        \lim_{n \to \infty} \cfrac{1}{\varepsilon_n} \int_{A_n} \left| W \Bigg( \!\bigg\{\! \frac{x}{\delta_n} \! \bigg\}_{Q_1}\!, \bigg\{\! \frac{\delta_n}{\eta_n}\!\bigg\{ \!\frac{x}{\delta_n} \!\bigg\}_{Q_1} \!\bigg\}_{Q_2}\!, u_n \!\Bigg) - W^\mathrm{h} (u_n) \right| \dd x = 0.
    \end{equation}
    Note that, as we wanted, the sets $P_n$, $Q_n$ and $R_n$ are pairwise disjoint and $A_n = P_n \cup Q_n \cup R_n$. Thus, we get that
    \begin{align*}
        &\int_{A_n} W \Bigg( \!\bigg\{\! \frac{x}{\delta_n} \! \bigg\}_{Q_1}\!, \bigg\{\! \frac{\delta_n}{\eta_n}\!\bigg\{ \!\frac{x}{\delta_n} \!\bigg\}_{Q_1} \!\bigg\}_{Q_2}\!, u_n \!\Bigg) \dd x \\
        &= \int_{P_n \cup Q_n} W \Bigg( \!\bigg\{\! \frac{x}{\delta_n} \! \bigg\}_{Q_1}\!, \bigg\{\! \frac{\delta_n}{\eta_n}\!\bigg\{ \!\frac{x}{\delta_n} \!\bigg\}_{Q_1} \!\bigg\}_{Q_2}\!, u_n \!\Bigg) \dd x + \int_{R_n} W \Bigg( \!\bigg\{\! \frac{x}{\delta_n} \! \bigg\}_{Q_1}\!, \bigg\{\! \frac{\delta_n}{\eta_n}\!\bigg\{ \!\frac{x}{\delta_n} \!\bigg\}_{Q_1} \!\bigg\}_{Q_2}\!, u_n \!\Bigg) \dd x.
    \end{align*}
    We now define 
    $$ z_n \coloneqq \delta_n \xi_1 + \eta_n \xi_2,$$
    and also the following sum operator:
    \begin{equation}
        \sum_{\xi_1, \xi_2} \coloneqq \sum_{\xi_1 \in I_n} \sum_{\xi_2 \in \Xi_2} + \sum_{\xi_1 \in I_n^\mathrm{c}} \sum_{\xi_2 \in J_n(\xi_1)},
    \end{equation}
    which relates to the integral over $P_n \cup Q_n$. Therefore, we get
    \begin{align*}
        &\int_{P_n \cup Q_n} W \left( \frac{x}{\delta_n}, \frac{x}{\eta_n}, u_n \right) \dd x\\
        &= \sum_{\xi_1, \xi_2} \int_{\delta_n \xi_1 + \delta_n \widehat{Q}_{1,\delta}} \int_{\frac{\eta_n}{\delta_n} \xi_2 + \frac{\eta_n}{\delta_n}  Q_2} \int_{Q_2} W (g_n(y_1,y_2), y_2, \mathcal{U}_2 u_n) \dd y_2 \dd y_1 \dd x.
    \end{align*}
    By using the definition of the homogenized $W^\mathrm{h}(p)$ given by
    $$ W^\mathrm{h}(z) \coloneqq \int_{Q_1} \int_{Q_2} W (y_1, y_2, z) \dd y_2 \dd y_1, $$
    together with the definition of $W_n^\mathrm{h}$ given by \eqref{eq:homogenized_delta_eta} and the definition of $z_n$ given above, we get
    \begin{align}
        &\int_{A_n}\! \left| W \!\Bigg( \!\bigg\{\! \frac{x}{\delta_n} \! \bigg\}_{Q_1}\!, \bigg\{\! \frac{\delta_n}{\eta_n}\!\bigg\{ \!\frac{x}{\delta_n} \!\bigg\}_{Q_1} \!\bigg\}_{Q_2}\!, u_n(x) \!\Bigg)\! - W^\mathrm{h} (u_n(x)) \right|\! \dd x \nonumber \\
        &\leq \!\sum_{\xi_1, \xi_2}\! \int_{\delta_n \xi_1 + \delta_n \widehat{Q}_{1,\delta}} \!\int_{\frac{\eta_n}{\delta_n} \xi_2 + \frac{\eta_n}{\delta_n} Q_2}\! \int_{Q_2}\! \left| W( g_n(y_1,y_2), y_2, u_n(z_n + \eta_n y_2))\! - \!W (g_n(y_1,y_2), y_2, u_n(z_n)) \right| \!\dd y_2 \!\dd y_1 \!\dd x \! \nonumber \\
        &\quad+ \! \sum_{\xi_1, \xi_2} \!\int_{\delta_n \xi_1 + \delta_n \widehat{Q}_{1,\delta}}\! \int_{\frac{\eta_n}{\delta_n} \xi_2 + \frac{\eta_n}{\delta_n} Q_2}\! \int_{Q_2} \!\left| W (g_n(y_1,y_2), y_2, u_n(z_n))\! -\! W (g_n(y_1,y_2), y_2, u_n(x)) \right|\! \dd y_2\! \dd y_1 \!\dd x\! \nonumber \\
        &\quad+\! \sum_{\xi_1, \xi_2}\! \int_{\delta_n \xi_1 + \delta_n \widehat{Q}_{1,\delta}} \!\int_{\frac{\eta_n}{\delta_n} \xi_2 + \frac{\eta_n}{\delta_n} Q_2}\! \int_{Q_2} \!\left| W (g_n(y_1,y_2), y_2, u_n(x))\! -\! W (y_1, y_2, u_n(x)) \right|\! \dd y_2 \!\dd y_1 \!\dd x\! \nonumber \\
        &\quad+\! \int_{R_n}\! \left| W\! \Bigg( \!\bigg\{\! \frac{x}{\delta_n} \! \bigg\}_{Q_1}\!, \bigg\{\! \frac{\delta_n}{\eta_n}\!\bigg\{ \!\frac{x}{\delta_n} \!\bigg\}_{Q_1} \!\bigg\}_{Q_2}\!, u_n(x) \!\Bigg)\! - W^\mathrm{h} (u_n(x)) \right| \!\dd x \!\nonumber \\
        &\eqqcolon J_n^1 + J_n^2 + J_n^3 + J_n^4. \label{eq:limsup_Jterms}
    \end{align}
    Before estimating these terms, we need these two further estimates. \\
    Take $x \in \delta_n \xi_1 + \delta_n \widehat{Q}_{1,\eta}$, $z_n$ defined as above as $z_n = \delta_n \xi_1 + \eta_n \xi_2 $, and $y_2 \in Q_2$. We need to estimate $ | u_n (x) - u_n (z_n) | $ and $ | u_n (z_n + \eta_n y_2) - u_n (z_n) | $. From Lemma \ref{lemma:modica_mortola} we know that $|g'_n(t)| \leq \frac{C}{\varepsilon_n}$, therefore, by denoting with $\omega_\gamma \colon [0,+\infty) \to [0,+\infty)$ the modulus of continuity of $\gamma$, we can write
    \begin{align}
    	\left| u_n (x) - u_n (z_n) \right| &= \left| \gamma \left( g_n \left( \mathrm{dist}(x,\partial A) \right) \right) - \gamma \left( g_n \left( \mathrm{dist}(z_n,\partial A) \right) \right) \right| \nonumber \\
    	&\leq \omega_\gamma \left( g_n \left( \mathrm{dist}(x,\partial A) \right) - g_n \left( \mathrm{dist}(z_n,\partial A) \right) \right) \nonumber \\
    	&\leq \omega_\gamma \left( \frac{1}{\varepsilon_n} \left| \mathrm{dist}(x,\partial A) - \mathrm{dist}(z_n,\partial A) \right| \right) \nonumber \\
    	&\leq \omega_\gamma \left( \frac{1}{\varepsilon_n} \left| x - z_n \right| \right) \nonumber \\
    	&\leq \omega_\gamma \left( C \frac{\delta_n}{\varepsilon_n} \right) \label{eq:limsup_estimate_zn_x_1}.
    \end{align}
    
    For the other estimate it is analogous, and the only difference is that $| z_n + \eta_n y_2 - z_n | \leq C \eta_n$. Therefore we get
    \begin{align}
    	\left| u_n (z_n + \eta_n y_2) - u_n (z_n) \right| &= \left| \gamma \left( g_n \left( \mathrm{dist}(z_n + \eta_n y_2,\partial A) \right) \right) - \gamma \left( g_n \left( \mathrm{dist}(z_n,\partial A) \right) \right) \right| \nonumber \\
    	&\leq \omega_\gamma \left( g_n \left( \mathrm{dist}(z_n + \eta_n y_2,\partial A) \right) - g_n \left( \mathrm{dist}(z_n,\partial A) \right) \right) \nonumber \\
    	&\leq \omega_\gamma \left( \frac{1}{\varepsilon_n} \left| \mathrm{dist}(z_n + \eta_n y_2,\partial A) - \mathrm{dist}(z_n,\partial A) \right| \right) \nonumber \\
    	&\leq \omega_\gamma \left( \frac{1}{\varepsilon_n} \eta_n \left| y_2 \right| \right) \nonumber \\
    	&\leq \omega_\gamma \left( C \frac{\eta_n}{\varepsilon_n} \right). \label{eq:limsup_estimate_zn_x_2}
    \end{align}
    
    We are now ready to estimate each term on the right-hand side of \eqref{eq:limsup_Jterms}.
    For $J_n^4$ we have the simple estimate:
    \begin{align}
        | J_n^3 | &= \int_{R_n} \left| W \Bigg( \!\bigg\{\! \frac{x}{\delta_n} \! \bigg\}_{Q_1}\!, \bigg\{\! \frac{\delta_n}{\eta_n}\!\bigg\{ \!\frac{x}{\delta_n} \!\bigg\}_{Q_1} \!\bigg\}_{Q_2}\!, u_n(x) \!\Bigg) - W^\mathrm{h} (u_n(x)) \right| \dd x \nonumber \\
        &\leq 2 C_M | R_n | \leq C\, \eta_n, \label{eq:estimate_J4}
    \end{align}
    where the second to last step follows from Assumption \ref{itm:1_H4}, and the last step follows from \eqref{eq:limsup_estimate_Rn}.
    
    For the estimate of $J_n^2$, we denote by $\omega_W$ the modulus of continuity of $W$ in $g_n(Q_1,Q_2) \times Q_2 \times B_M (0)$. Let $H_2^n$ be the cardinality of $(I_n \times \Xi_2) \cup (I_n^\mathrm{c} \times J_n)$. From our costruction it follows that:
    \begin{equation*}
        \lim_{n \to \infty} H_2^n \left\lfloor \frac{|A_n|}{\eta_n^N} \right\rfloor^{-1} = 1.
    \end{equation*}
    Let us define 
    $$ \lambda_n^2(y_1,y_2;\xi_1) \coloneqq \sup_{x \in \delta_n \xi_1 + \delta_n \widehat{Q}_{1,\delta}} | W(g_n(y_1,y_2), y_2, u_n(z_n)) - W(g_n(y_1,y_2), y_2, u_n(x)) |. $$
    We therefore have
    \begin{align}
        | J_n^2 | \! &=\! \sum_{\!\xi_1\!, \xi_2\!}\! \int_{\!\delta_n\! \xi_1\! + \!\delta_n\! \widehat{Q}_{1,\delta}}\! \int_{\!\frac{\eta_n}{\delta_n}\! \xi_2\! +\! \frac{\eta_n}{\delta_n} \!Q_2\!}\! \int_{\!Q_2\!}  \!\left| W(g_n(y_1,y_2), y_2, u_n(z_n))\! -\! W(g_n(y_1,y_2), y_2, u_n(x)) \right| \!\dd y_2\! \dd y_1\! \dd x\! \nonumber \\
        &\leq \sum_{\xi_1, \xi_2} \int_{\delta_n \xi_1 + \delta_n \widehat{Q}_{1,\delta}} \int_{\frac{\eta_n}{\delta_n} \xi_2 + \frac{\eta_n}{\delta_n} Q_2} \int_{Q_2} \lambda_n^2(y_1,y_2; \xi_1) \dd y_2 \dd y_1 \dd x \nonumber \\
        &\leq H_2^n \eta_n^N \int_{Q_1} \int_{Q_2} \lambda_n^2 (y_1, y_2; 0) \dd y_2 \dd y_1 \nonumber \\
        &\leq C \varepsilon_n \int_{Q_1} \int_{Q_2} \lambda_n^2 (y_1, y_2; 0) \dd y_2 \dd y_1. \label{eq:estimate_J2}
    \end{align}
    From the definition of $u_n$, together with the boundedness assumption on $W$, and the boundedness of $g_n(y_1,y_2)$, we have that for every $n \in\N$, for almost every $y_1 \in Q_1$, $y_2 \in Q_2$, there exists $C > 0$ such that $\lambda_n^2 (y_1, y_2; \xi_1) \leq C$. \\
    Moreover, for almost every $y_1 \in Q_1$, $y_2 \in Q_2$, using the definition of $\omega_W$ and \eqref{eq:limsup_estimate_zn_x_1}, we have
    \begin{align*}
        \lim_{n\to \infty} \lambda_n^2 (y_1, y_2; \xi_1) &\leq \lim_{n \to \infty} \sup_{x \in \lambda_n \xi_1 + \lambda_n \widehat{Q}_{1,\delta}} \omega_W (g_n(y_1,y_2), y_2, |u_n(z_n) - u_n(x)|) \\
        &\leq \lim_{n \to \infty} \omega_W \left( g_n(y_1,y_2), y_2, \omega_\gamma \left(C \frac{\delta_n}{\eta_n} \right)\right) = 0,
    \end{align*}
    since $g_n(y_1,y_2) \to 0$ as $n \to \infty$ for $\ln$-a.e. $y_1\in Q_1,y_2 \in Q_2$.
    Therefore we can use the Dominated Convergence Theorem to conclude that
    \begin{equation}
        \lim_{n \to \infty} \int_{Q_1} \int_{Q_2} \lambda_n^2(y_1, y_2;0) \dd y_2 \dd y_1 = 0.\label{eq:modulus_J2}
    \end{equation}
    
    For $J_n^1$, we need to define another function
    $$ \lambda_n^1(y_1,y_2;\xi_1) \coloneqq \sup_{x \in \delta_n \xi_1 + \delta_n \widehat{Q}_{1,\delta}} | W(g_n(y_1,y_2), y_2, u_n(z_n+\eta_n y_2)) - W(g_n(y_1,y_2), y_2, u_n(z_n)) |. $$
    We have a similar estimate
    \begin{align}
        | J_n^1 | \! &=\! \sum_{\!\xi_1\!, \xi_2\!}\! \int_{\!\delta_n\! \xi_1\! + \!\delta_n\! \widehat{Q}_{1,\delta}}\! \int_{\!\frac{\eta_n}{\delta_n}\! \xi_2\! +\! \frac{\eta_n}{\delta_n} \!Q_2\!}\! \int_{\!Q_2\!} \!\left| W( g_n(y_1,y_2), y_2, u_n(z_n + \eta_n y_2)) \!-\! W (g_n(y_1,y_2), y_2, u_n(z_n)) \right|\! \dd y_2\! \dd y_1\! \dd x \nonumber \\
        &\leq \sum_{\xi_1, \xi_2} \int_{\delta_n \xi_1 + \delta_n \widehat{Q}_{1,\delta}} \int_{\frac{\eta_n}{\delta_n} \xi_2 + \frac{\eta_n}{\delta_n} Q_2} \int_{Q_2} \lambda_n^1(y_1, y_2; \xi_1) \dd y_2 \dd y_1 \dd x \nonumber \\
        &\leq H_2^n \eta_n^N \int_{Q_1} \int_{Q_2} \lambda_n^1 (y_1, y_2;0) \dd y_2 \dd y_1 \nonumber \\
        &\leq C \varepsilon_n \int_{Q_1} \int_{Q_2} \lambda_n^1 (y_1, y_2;0) \dd y_2 \dd y_1 \label{eq:estimate_J1}
    \end{align}
    Similar to before, thanks to the boundedness of $W$, the boundedness of $g_n(y_1,y_2)$, the definition of $\omega_W$ and \eqref{eq:limsup_estimate_zn_x_2}, we can see analogously that
    \begin{equation}
        \lim_{n \to \infty} \int_{Q_1} \int_{Q_2} \lambda_n^1(y_1, y_2;0) \dd y_2 \dd y_1 = 0.\label{eq:modulus_J1}
    \end{equation}
    For $J_n^3$, we define
    $$ \lambda_n^3(y_1,y_2;\xi_1) \coloneqq \sup_{x \in \delta_n \xi_1 + \delta_n \widehat{Q}_{1,\eta}} | W(g_n(y_1,y_2),y_2,u_n(x)) - W(y_1,y_2,u_n(x)) | $$
    We therefore have
    \begin{align*}
    	|J_n^3| &= \sum_{\xi_1, \xi_2} \int_{\delta_n \xi_1 + \delta_n \widehat{Q}_{1,\delta}} \int_{\frac{\eta_n}{\delta_n} \xi_2 + \frac{\eta_n}{\delta_n} Q_2} \int_{Q_2} \left| W (g_n(y_1,y_2), y_2, u_n(x)) - W (y_1, y_2, u_n(x)) \right| \dd y_2 \dd y_1 \dd x \\
    	&\leq \sum_{\xi_1, \xi_2} \int_{\delta_n \xi_1 + \delta_n \widehat{Q}_{1,\delta}} \int_{\frac{\eta_n}{\delta_n} \xi_2 + \frac{\eta_n}{\delta_n} Q_2} \int_{Q_2} \lambda_n^3 (y_1,y_2; \xi_1) \dd y_2 \dd y_1 \dd x \\
    	&\leq H_2^n \eta_n^N \int_{Q_1} \int_{Q_2} \lambda_n^3(y_1,y_2; 0) \dd y_2 \dd y_1 \\
    	&\leq C \varepsilon_n \int_{Q_1} \int_{Q_2} \lambda_n^3(y_1,y_2; 0) \dd y_2 \dd y_1.
    \end{align*}
    Same as before, thanks to the boundedness of $u_n$, $W$ and $g_n(y_1,y_2)$, we get that $| \lambda(y_1,y_2;\xi_1)| \leq C$.
    Moreover, thanks to Remark \ref{remark:convergence}, for almost every $y_1 \in Q_1, y_2 \in Q_2$ we get
    \begin{align*}
    	\lim_{n \to \infty} \lambda_n^3(y_1,y_2;\xi_2) &\leq \lim_{n \to \infty} \sup_{x \in \delta_n \xi_1 + \delta_n \widehat{Q}_{1,\eta}} \omega_W \left( | g_n(y_1,y_2) - y_1|,y_2,u_n(x) \right) \\
    	&\leq \lim_{n \to \infty} \omega_W \left( \frac{\eta_n}{\delta_n}, y_2, C \right) = 0.
    \end{align*}
    Therefore, using the Dominated Convergence Theorem we have that
    \begin{equation}\label{eq:modulus_J3}
    	\lim_{n \to \infty} \int_{Q_1} \int_{Q_2} \lambda_n^3 (y_1,y_2;0) \dd y_2 \dd y_1 = 0
    \end{equation}
    
    Thus, from \eqref{eq:limsup_Jterms}, \eqref{eq:modulus_J1}, \eqref{eq:modulus_J2}, \eqref{eq:modulus_J3} and \eqref{eq:estimate_J4} we obtain \eqref{eq:limsup_homdifference}.

    \textbf{Step 4: Convergence of energy.}
    Our objective now is to estimate the value of $F^{(1)}_n (u_n)$, which, using the definition of the recovery sequence, is equal to
    \begin{align*}
        F^{(1)}_n (u_n) &= \int_\Omega \left[ \frac{1}{\varepsilon_n} W \Bigg( \!\bigg\{\! \frac{x}{\delta_n} \! \bigg\}_{Q_1}\!, \bigg\{\! \frac{\delta_n}{\eta_n}\!\bigg\{ \!\frac{x}{\delta_n} \!\bigg\}_{Q_1} \!\bigg\}_{Q_2}\!, u_n \!\Bigg) + \varepsilon_n | \nabla u_n |^2 \right] \dd x \\
        &= \int_{A_n} \left[ \frac{1}{\varepsilon_n} W \Bigg( \!\bigg\{\! \frac{x}{\delta_n} \! \bigg\}_{Q_1}\!, \bigg\{\! \frac{\delta_n}{\eta_n}\!\bigg\{ \!\frac{x}{\delta_n} \!\bigg\}_{Q_1} \!\bigg\}_{Q_2}\!, u_n \!\Bigg) + \varepsilon_n | \nabla u_n |^2 \right] \dd x.
    \end{align*}
    By adding and subtracting $\frac{1}{\varepsilon_n} W^\mathrm{h}(u_n)$ inside the integral we get
    \begin{align}
        \int_{A_n} &\left[ \frac{1}{\varepsilon_n} W \Bigg( \!\bigg\{\! \frac{x}{\delta_n} \! \bigg\}_{Q_1}\!, \bigg\{\! \frac{\delta_n}{\eta_n}\!\bigg\{ \!\frac{x}{\delta_n} \!\bigg\}_{Q_1} \!\bigg\}_{Q_2}\!, u_n \!\Bigg) + \varepsilon_n | \nabla u_n |^2 \right] \dd x \nonumber \\
        &\qquad\qquad\leq \int_{A_n} \left[ \cfrac{1}{\varepsilon_n} W^\mathrm{h} (u_n) + \varepsilon_n | \nabla u_n |^2 \right] \dd x \nonumber \\
        &\qquad\qquad\qquad\qquad+ \cfrac{1}{\varepsilon_n} \int_{A_n} \left| W \Bigg( \!\bigg\{\! \frac{x}{\delta_n} \! \bigg\}_{Q_1}\!, \bigg\{\! \frac{\delta_n}{\eta_n}\!\bigg\{ \!\frac{x}{\delta_n} \!\bigg\}_{Q_1} \!\bigg\}_{Q_2}\!, u_n \!\Bigg) - W^\mathrm{h} (u_n) \right| \dd x. \label{eq:limsup_intermediate}
    \end{align}
    First of all, from \eqref{eq:limsup_homdifference} we get
    $$ \limsup_{n\to \infty} F^{(1)}_n (u_n) \leq \limsup_{n \to \infty} \int_{A_n} \left[ \frac{1}{\varepsilon_n} W^\mathrm{h} \left( u_n \right) + \varepsilon_n | \nabla u_n |^2 \right] \dd x. $$
    Then, from the last inequality, \eqref{eq:def_un}, the coarea formula and \eqref{eq:hausdorff_approx}, we get
    \begin{align*}
        &\limsup_{n \to \infty} F_n^{(1)} (u_n) \leq \limsup_{n \to \infty} \int_{A_n} \left[ \frac{1}{\varepsilon_n} W^\mathrm{h} \left( u_n \right) + \varepsilon_n | \nabla u_n |^2 \right] \dd x \\
        &= \limsup_{n \to \infty} \int_{-\tau_n}^{\tau_n} \left[ \frac{1}{\varepsilon_n} W^\mathrm{h} (\gamma(g_n(s))) + \varepsilon_n | \gamma'(g_n(s)) |^2 | g_n' (s) |^2 \right] \mathcal{H}^{N-1} \left( \{ \text{dist} (x, \partial A) = s \} \right) \dd s \\
        &\leq \limsup_{n \to \infty} \sup_{|s|\leq \tau_n} \mathcal{H}^{N-1} \left( \{ \text{dist} (x, \partial A) = s \} \right) \int_{-\tau_n}^{\tau_n} \left[ \frac{1}{\varepsilon_n} W^\mathrm{h} (\gamma(g_n(s))) + \varepsilon_n | \gamma'(g_n(s)) |^2 | g_n' (s) |^2 \right] \dd s \\
        &\leq \limsup_{n \to \infty} \sup_{|s|\leq \tau_n} \mathcal{H}^{N-1} \left( \{ \text{dist} (x, \partial A) = s \} \right) \left[ \int_{-1}^1 2 \sqrt{W^\mathrm{h} (\gamma(t))} | \gamma' (t)|\dd t + 2 \sqrt{\lambda} L(\gamma) \right] \\
        &\leq \left( \sigma_{\text{h}} + 3 \varsigma \right) \limsup_{n \to \infty} \sup_{|s|\leq \tau_n} \mathcal{H}^{N-1} \left( \{ \text{dist} (x, \partial A) = s \} \right) \\
        &= \left( \sigma_{\text{h}} + 3 \varsigma \right) \mathcal{H}^{N-1} (\partial A \cap \Omega),
    \end{align*}
    where in the last inequality we used Lemma \ref{lemma:modica_mortola} and \eqref{eq:limsup_quasigeodesic}.
    We conclude by arbitrariness of $\varsigma$.
\end{proof}

\section{Mass-constrained case}\label{sec:mass}

The goal of this section is to prove Theorem \ref{thm:mass_constraint}.
In the case of a mass constraint, the proof needs to change slightly. The procedures for the compactness and liminf inequality remain unchanged, and only the proof for the limsup has to be modified.\\
\begin{proof}
    
\textbf{Step 1:} We now have $u \in \mathrm{BV}(\Omega; \{ a,b\})$ with 
$$ \int_\Omega u \dd x = m a + (1-m) b. $$
Then, defining as before $A \coloneqq \{ u = a \}$, this implies that $|A| = m$. Since this is a set of finite perimeter, in Proposition \ref{prop:set_approx} we approximated with a sequence $(A_n)_n$ of appropriate $C^2$ sets. The problem here is that we did not require these sets to satisfy the constraint $|A_n|=m$ for every $n \in \N$. Therefore, this is the first change that needs to be addressed: we thus follow an idea originally due to Ryan Murray, appropriately modified for our case (see \cite{murray2016slow}). 

\begin{figure}[hb]
    \centering
    \includegraphics[width=0.6\linewidth]{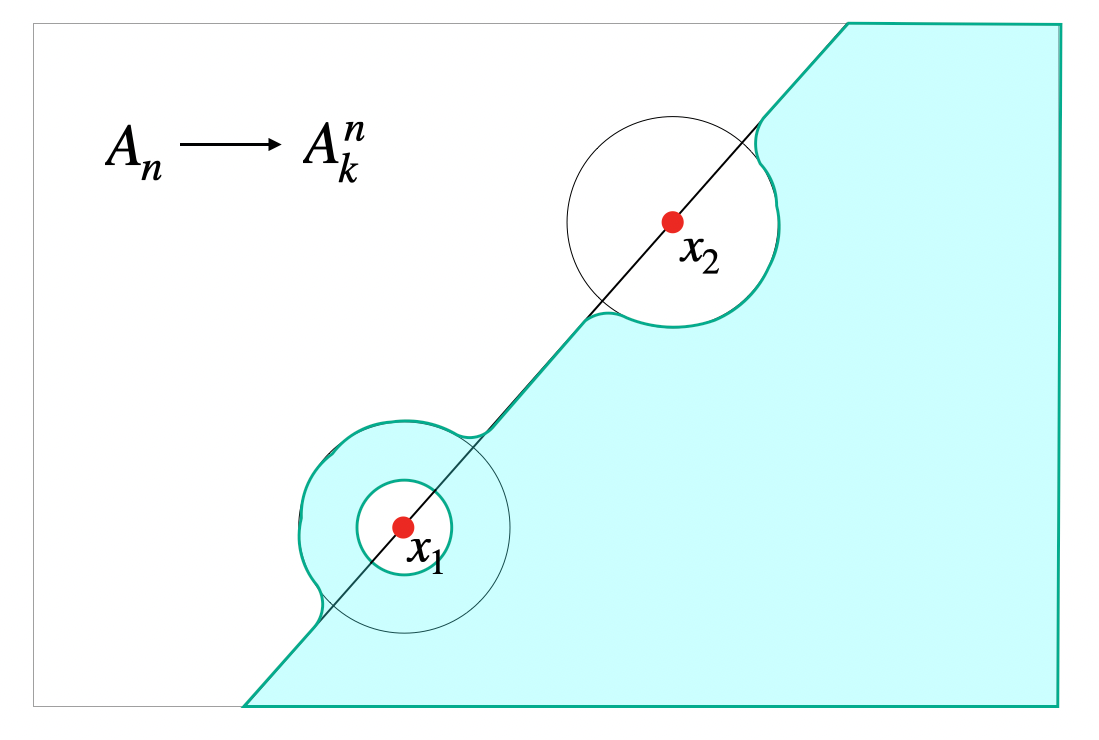}
    \caption{Idea of the construction for the $C^2$ approximations.}
    \label{fig:mass_constraint_set}
\end{figure}

Since $A$ is a set of finite perimeter, we can take its reduced boundary $\partial^* A$. Let us now take $x_1, x_2 \in \partial^* A$, and $k \in \N$ and define 
$$ D_k \coloneqq \left( A \cup B \! \left( \! x_1, \frac{1}{k} \! \right) \right) \setminus B \! \left( \! x_2, \frac{1}{k} \! \right). $$
it is possible to see that
$$ \mathbbm{1}_{D_k} \to \mathbbm{1}_A \quad \text{in } L^1(\Omega; \R) \quad \text{as } k \to \infty. $$
Moreover, we can check that
\begin{align*}
	\left| \mathrm{D} \mathbbm{1}_{D_k} \right| (\Omega) &= \mathcal{H}^{N-1} \left(\Omega \cap \partial^* D_k \right) \\
	&\leq \mathcal{H}^{N-1} \left(\Omega \cap \partial^* A \right) + \mathcal{H}^{N-1} \left(\Omega \cap \partial B \! \left( \! x_1, \frac{1}{k} \! \right) \right) + \mathcal{H}^{N-1} \left(\Omega \cap \partial B \! \left( \! x_2, \frac{1}{k} \! \right) \right) \\
	&\to \mathcal{H}^{N-1} \left(\Omega \cap \partial^* A \right) \\
	&= \left| \mathrm{D} \mathbbm{1}_{A} \right| (\Omega).
\end{align*}
Therefore we also have
$$ \lim_{k \to \infty} \mathrm{Per}(D_k;\Omega) = \mathrm{Per}(A;\Omega). $$
Since $x_1, x_2 \in \partial^* A$, by definition of reduced boundary they have density $\frac{1}{2}$ with respect to $A$, that is
$$ \lim_{r \to 0} \frac{|A \cap B(x_1,r)|}{|B(x_1,r)|} = \cfrac{1}{2}, \qquad \lim_{r \to 0} \frac{|A \cap B(x_2,r)|}{|B(x_2,r)|} = \cfrac{1}{2}. $$
Therefore, this implies that there exists $k$ large enough such that
$$ \left| A \cap B \left( x_1, \frac{1}{k} \right) \right| > \frac{1}{4} \left| B \left( x_1, \frac{1}{k} \right) \right|, \qquad \left| A \cap B \left( x_2, \frac{1}{k} \right) \right| < \frac{3}{4} \left| B \left( x_2, \frac{1}{k} \right) \right|. $$
Let now $(D_k^n)_n$ be the sequence of $C^2$ sets obtained by applying Proposition \ref{prop:set_approx} to the set $D_k$. Since we have convergence in perimeter and $L^1$, this implies that for a fixed $k$, there exists $\widetilde{n}_1(k) \in \N$ such that for every $n \geq \widetilde{n}_1(k)$ we have
$$ \left| \mathrm{Per} \left( D_k; \Omega \right) - \mathrm{Per} \left( D_k^n; \Omega \right) \right| \leq \frac{1}{k}, \qquad \int_\Omega \left| \mathbbm{1}_{D_k} (x) - \mathbbm{1}_{D_k^n}(x) \right| \dd x \leq \frac{1}{k}. $$
Since these sets $D_k^n$ are obtained by using a standard mollifying procedure and taking a super-level set, we know that there exists $\widetilde{n}_2(k) \in \N$ such that for every $n \geq \max \{ \widetilde{n}_1(k), \widetilde{n}_2(k) \}$ we have both the previous inequalities and also
$$ B \left( x_1, \left( \frac{4}{5} \right)^{\frac{1}{N}} \frac{1}{k} \right) \subset D_k^n, \qquad B \left( x_2, \left( \frac{4}{5} \right)^{\frac{1}{N}} \frac{1}{k} \right) \subset \Omega \setminus D_k^n. $$
This holds because 
$$ \left( \frac{4}{5} \right)^{\frac{1}{N}} < 1 \quad \forall N \in \N. $$

We have now three cases to distinguish between. If $| D_k^n | = m$ then we do not have to do anything for now. Assume now that $ | D_k^n | > m $. Define $r_k^n >0$ to be the radius such that
$$ \left| B \left( x_1, r_k^n \right) \right| = \left| D_k^n \right| - m > 0, $$
and define (see Fig. \ref{fig:mass_constraint_set}) 
$$ A_k^n \coloneqq D_k^n \setminus B \left( x_1, r_k^n \right). $$
We now want to prove that $$ r_k^n < \left( \frac{4}{5} \right)^\frac{1}{N} \frac{1}{k}. $$
Since we know that $|A|=m$ and
$$ \left| A \cap B \left( x_1, \frac{1}{k} \right) \right| > \frac{1}{4} \left| B \left( x_1, \frac{1}{k} \right) \right|, $$
we have
\begin{align*}
	\left| D_k \right| &= |A| + \left| B \left( x_1, \frac{1}{k} \right) \setminus A \right| - \left| B \left( x_2, \frac{1}{k} \right) \cap A \right| \\
	&\leq |A| + \left| B \left( x_1, \frac{1}{k} \right) \right| - \left| B \left( x_1, \frac{1}{k} \right) \cap A \right| - \left| B \left( x_2, \frac{1}{k} \right) \cap A \right| \\
	&\leq |A| + \left| B \left( x_1, \frac{1}{k} \right) \right| - \frac{1}{4} \left| B \left( x_1, \frac{1}{k} \right) \right| \\
	&= |A| + \frac{3}{4} \left| B \left( x_1, \frac{1}{k} \right) \right| \\
	&= m + \left| B \left( x_1, \left( \frac{3}{4} \right)^\frac{1}{N} \frac{1}{k} \right) \right|.
\end{align*}
Therefore, for $n$ large enough we also get 
$$ \left| B \left( x_1, r_k^n \right) \right| = \left| D_k^n \right| - m \leq \left| B \left( x_1, \left( \frac{3}{4} \right)^\frac{1}{N} \frac{1}{k} \right) \right| < \left| B \left( x_1, \left( \frac{4}{5} \right)^\frac{1}{N} \frac{1}{k} \right) \right|, $$
therefore we got the estimate on $r_k^n$.
This in particular implies that the set $A_k^n$ is also $C^2$, and it follows that
$$ |A_k^n| = |D_k^n| - \left| B \left( x_1, r_k^n \right) \right| = |D_k^n| - |D_k^n| + m = m. $$
Assume now that $|D_k^n| < m$. Define $r_k^n >0$ to be the radius such that
$$ \left| B \left( x_1, r_k^n \right) \right| = m - \left| D_k^n \right| > 0, $$
and define 
$$ A_k^n \coloneqq D_k^n \cup B \left( x_2, r_k^n \right). $$
We now want to prove that $$ r_k^n < \left( \frac{4}{5} \right)^\frac{1}{N} \frac{1}{k}. $$
Since we know that $|A|=m$ and
$$ \left| A \cap B \left( x_2, \frac{1}{k} \right) \right| < \frac{3}{4} \left| B \left( x_2, \frac{1}{k} \right) \right|, $$
we have
\begin{align*}
	\left| D_k \right| &= |A| + \left| B \left( x_1, \frac{1}{k} \right) \setminus A \right| - \left| B \left( x_2, \frac{1}{k} \right) \cap A \right| \\
	&\geq |A| - \frac{3}{4} \left| B \left( x_2, \frac{1}{k} \right) \right| \\
	&= m - \left| B \left( x_2, \left( \frac{3}{4} \right)^\frac{1}{N} \frac{1}{k} \right) \right|.
\end{align*}
Therefore, for $n$ large enough we also get 
$$ \left| B \left( x_2, r_k^n \right) \right| = m - \left| D_k^n \right| \leq \left| B \left( x_2, \left( \frac{3}{4} \right)^\frac{1}{N} \frac{1}{k} \right) \right| < \left| B \left( x_2, \left( \frac{4}{5} \right)^\frac{1}{N} \frac{1}{k} \right) \right|, $$
therefore we got the estimate on $r_k^n$.
This in particular implies that the set $A_k^n$ is also $C^2$, and it follows that
$$ |A_k^n| = |D_k^n| + \left| B \left( x_2, r_k^n \right) \right| = |D_k^n| + m - |D_k^n| = m. $$
Therefore we just proved that we can always for every $k$ we can modify the sequence of sets obtained from Proposition \ref{prop:set_approx} applied to $D_k$ such that every element of the sequence satisfies the mass constraint. Using a diagonal argument, we can therefore obtain the desired conclusion.
\textbf{Step 2:} Let $u_n$ be defined as in \eqref{eq:def_un}. In general it is not true that this function satisfies the mass constraint, therefore we need to modify it accordingly.\\
Let $N \geq 2$ and define
$$ m_n \coloneqq \int_\Omega u_n (x) \dd x. $$
If, for a given $n \in \N$, we have $m_n = m$, then we are done. Let us suppose that $m_n \neq m$. 
Let us recall the definition of $A_n$:
$$ A_n \coloneqq \{ x \in \Omega : | \mathrm{dist}(x, \partial A) | \leq \tau_n \}, $$
which is the set where $u_n(x) \notin \{a, b\}$. 

Let $x_0 \in \Omega \setminus A_n$ such that $u_n (x_0) = u(x_0) = a$. Let now $(r_n)_{n \in \N}$ be an infinitesimal sequence and define $B_n \coloneqq B (x_0, r_n)$. Since $r_n \leq \mathrm{dist}(x_0,A_n) $, we can modify $u_n$ as follows (see Fig. \ref{fig:mass_constraint_recovery}):
\begin{gather*}
    v_n (x) \coloneqq \begin{cases}
        u_n(x) \qquad &x \in \Omega \setminus B_n, \\
        a + c_n (m_n - m) \left( 1 - \frac{|x - x_0|}{r_n} \right) \qquad &x \in B_n,
    \end{cases}
\end{gather*}
where $c_n \in \R$ is to be determined by enforcing the mass constraint.

\begin{figure}[hb]
    \centering
    \includegraphics[width=0.6\linewidth]{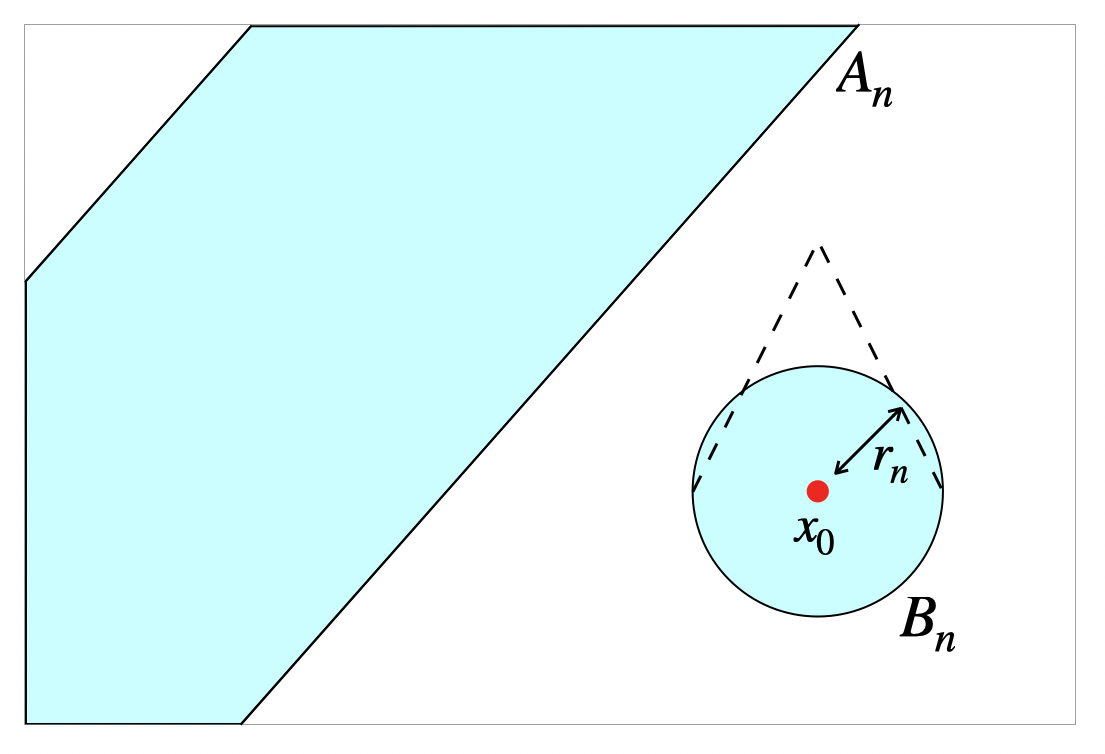}
    \caption{Idea of the construction for the recovery sequence.}
    \label{fig:mass_constraint_recovery}
\end{figure}

Therefore we must have
\begin{align*}
    m &= \int_\Omega v_n (x) \dd x \\
    &= \int_{\Omega \setminus B_n} u_n(x) \dd x + \int_{B_n} \left[ a + c_n (m_n - m) \left( 1 - \frac{|x - x_0|}{r_n} \right) \right] \dd x \\
    &= \int_\Omega u_n (x) \dd x - \int_{B_n} u_n (x) \dd x + \int_{B_n} a \dd x + c_n (m_n - m) \int_{B_n} \left[ 1 - \frac{|x - x_0|}{r_n}\right] \dd x \\
    &= m_n + c_n (m_n - m) N \omega_N r_n^N \int_0^1 ( 1 - s) s^{N-1} \dd s\\
    &= m_n + c_n (m_n - m) r_n^N \frac{\omega_N}{N+1},
\end{align*}
which implies
$$ c_n = - \frac{N+1}{\omega_N} \ \frac{1}{r_n^N}.$$
Therefore with this choice of $c_n$, the function $v_n$ satisfies the mass constraint. 
Moreover, we also have
$$ | m_n - m| \leq \int_\Omega |u_n - u| \dd x = \int_{A_n} |u_n - u| \dd x \leq |b-a| \ln(A_n) \leq C \varepsilon_n \mathcal{H}^{N-1}(\partial A). $$
We need to check that $v_n$ still converges in $L^1$, and that is does not change the energy in the limit. Checking the $L^1$ convergence is easy, since the way we chose the constant $c_n$ implies
$$ \left| \int_{B_n} c_n (m_n - m) \left( 1 - \frac{|x - x_0|}{r_n} \right) \dd x  \right| = | m - m_n | \leq C \varepsilon_n \to 0. $$
For the energy convergence, we need to check that
$$ \lim_{n \to \infty} F_n^{(1)} (v_n, B_n) = 0. $$
Let us check first the potential energy term: if we prove that $v_n(x)$ restricted to $B_n$ is bounded, then we can use Assumption \ref{itm:1_H4} to see that
\begin{equation}\label{eq:mass_constraint_1st_try}
	\int_{B_n} \frac{1}{\varepsilon_n} W \Bigg( \!\bigg\{\! \frac{x}{\delta_n} \! \bigg\}_{Q_1}\!, \bigg\{\! \frac{\delta_n}{\eta_n}\!\bigg\{ \!\frac{x}{\delta_n} \!\bigg\}_{Q_1} \!\bigg\}_{Q_2}\!, v_n(x) \!\Bigg) \dd x \leq C \frac{r_n^N}{\varepsilon_n},
\end{equation}
but this would require that
$$ | a + c_n (m_n - m)| \leq C. $$ 
By substituting the for $c_n$ and $m_n - m$ we can see that this is equivalent to 
$$ |c_n| |m_n - m| \leq C \frac{\varepsilon_n}{r_n^N}, $$
which is incompatible with \eqref{eq:mass_constraint_1st_try}. Therefore, we need a finer estimate, and that is where the assumption that $\partial_z W(y_1,y_2,a)$ exists and is equal to $0$ comes into play. 
{\lemma\label{lemma:mass_constraint}
	Fix $y_1 \in Q_1, y_2 \in Q_2$, and assume that $W(y_1,y_2,\cdot)$ is differentiable in $z=a$. Then, for every $M > |\partial_z W(y_1,y_2,a)|$ there exists a $\delta > 0$ such that
	$$ | W(y_1,y_2,z) | = |W(y_1,y_2,z) - W(y_1,y_2,a) | \leq M |z - a|, $$
	for all $|z - a| \leq \delta$.
}
{\proof
[Proof of Lemma \ref{lemma:mass_constraint}]
	Let us assume by contradiction that there exists $M > |\partial_z W(y_1,y_2,a)|$ such that for every $\delta > 0$ there exists a point $z_\delta$ with $|z_\delta - a| \leq \delta$, satisfying
	$$ |W(y_1,y_2,z_\delta) - W(y_1,y_2,a) | > M |z_\delta - a|. $$
	Since this holds for every $\delta > 0$, let us take a sequence $\delta_n = \frac{1}{n}$: we can therefore find a sequence $z_n$, converging to $a$ such that
	$$ | W(y_1,y_2,z_n) - W(y_1,y_2,a) | > M |z_n - a|, $$
	which clearly implies
	$$ \frac{| W(y_1,y_2,z_n) - W(y_1,y_2,a) |}{|z_n - a|} > M. $$
	Since $W(y_1,y_2,\cdot)$ is globally continuous, and also differentiable in $z=a$, we can take the limit as $n \to \infty$ in the expression above to obtain
	$$ | \partial_z W(y_1,y_2,a) | > M, $$
	and we get a contradiction. \qed \\
}
We require the differential of $W(y_1,y_2,\cdot)$ in $z=a$ to be equal to $0$, so the previous Lemma tells us that for every $M>0$ it holds $|W(y_1,y_2,z)| \leq M|z - a|$, as long as $z$ is close enough to $a$. For our case this is exactly equal to the condition we had before, that is 
$$ \frac{\varepsilon_n}{r_n^N} \to 0 \qquad n \to \infty. $$
If this holds, we fix an arbitrary $M>0$ and write
\begin{align*}
	\int_{B_n} \frac{1}{\varepsilon_n} W \Bigg( \!\bigg\{\! \frac{x}{\delta_n} \! \bigg\}_{Q_1}\!, \bigg\{\! \frac{\delta_n}{\eta_n}\!\bigg\{ \!\frac{x}{\delta_n} \!\bigg\}_{Q_1} \!\bigg\}_{Q_2}\!, v_n(x) \!\Bigg) \dd x &\leq \int_{B_n} \frac{1}{\varepsilon_n} M |v_n(x) - a | \dd x \\
	&\leq \int_{B_n} \frac{1}{\varepsilon_n} M \left| c_n (m_n - m) \left( 1 - \frac{| x - x_0|}{r_n}\right) \right| \dd x \\
	&\leq \int_{B_n} \frac{1}{\varepsilon_n} M \left| c_n (m_n - m) \right| \dd x \\
	&\leq C \frac{r_n^N}{\varepsilon_n} M \frac{\varepsilon_n}{r_n^N} = CM.
\end{align*}

For the gradient energy term, we get
$$ \int_{B_n} \varepsilon_n | \nabla v_n|^2 \dd x = \int_{B_n} \varepsilon_n \frac{c_n^2 |m_n - m|^2}{r_n^2} \dd x = C \frac{|m_n - m|^2 \varepsilon_n}{r_n^{2+N}}.$$

and therefore we obtain
$$ \int_{B_n} \varepsilon_n | \nabla v_n|^2 \dd x \leq C \frac{\varepsilon_n^3}{r_n^{2+N}}. $$
Summing up, we get
$$ F_n^{(1)} (v_n, B_n) \leq C M + C \frac{\varepsilon_n^3}{r_n^{2+N}}, $$
which tends to $M$ if $r_n = \varepsilon_n^\alpha$ with $\alpha < \frac1N$. Clearly $r_n$ also needs to vanish in the limit, so we also need $\alpha > 0$, which brings us to 
$$ 0 < \alpha < \frac{1}{N}. $$
Under this condition, we get
$$ \lim_{n \to \infty}  F_n^{(1)} (v_n, B_n) \leq CM,$$
and we conclude by arbitrariness of $M>0$. 
\end{proof}

\begin{remark}
	It is clear that this proof, under the mild assumption of $W$ being differentiable in the wells, with differential equal to $0$, works for any $N \geq 1$ and $M > 1$.
\end{remark}
\begin{remark}
	The assumption of differentiability can be substituted by a stronger one, namely that close enough to the wells it holds $W(y_1,y_2,z) \leq |z|^\beta$, for a $\beta > 1$. This assumption implies differentiability, but the contrary does not hold.
\end{remark}

\section*{Acknowledgements}

Both authors were supported by the grant NWO-OCENW.M.21.336, MATHEMAMI - Mathematical Analysis of phase
Transitions in HEterogeneous MAterials with Materials Inclusions.

\vspace{0.5cm}
\noindent\textbf{Data Availability}: Data sharing not applicable to this article as no datasets were generated or analysed during the current study.

\vspace{0.5cm}
\noindent\textbf{Ethics Statement}: The authors have no conflicts of interest to declare relevant to this article.

\bibliographystyle{siam}
\bibliography{ref}

\begin{thebibliography}{10}

\bibitem{AFP}
{\sc L.~Ambrosio, N.~Fusco, and D.~Pallara}, {\em Functions of bounded variation and free discontinuity problems}, Oxford Mathematical Monographs, The Clarendon Press, Oxford University Press, New York, 2000.

\bibitem{AnsBraChi1}
{\sc N.~Ansini, A.~Braides, and V.~Chiad\`o~Piat}, {\em Interactions between homogenization and phase-transition processes}, Tr. Mat. Inst. Steklova, 236 (2002), pp.~386--398.

\bibitem{AnsBraChi2}
\leavevmode\vrule height 2pt depth -1.6pt width 23pt, {\em Gradient theory of phase transitions in composite media}, Proc. Roy. Soc. Edinburgh Sect. A, 133 (2003), pp.~265--296.

\bibitem{anzellotti1993asymptotic}
{\sc G.~Anzellotti and S.~Baldo}, {\em Asymptotic development by {$\Gamma$}-convergence}, Applied mathematics and optimization, 27 (1993), pp.~105--123.

\bibitem{BacMarZep}
{\sc A.~Bach, R.~Marziani, and C.~I. Zeppieri}, {\em {$\Gamma $}-convergence and stochastic homogenisation of singularly-perturbed elliptic functionals}, Calc. Var. Partial Differential Equations, 62 (2023), pp.~Paper No. 199, 54.

\bibitem{baldo1990minimal}
{\sc S.~Baldo}, {\em Minimal interface criterion for phase transitions in mixtures of {C}ahn-{H}illiard fluids}, Annales de l'Institut Henri Poincar{\'e} C, Analyse non lin{\'e}aire, 7 (1990), pp.~67--90.

\bibitem{BarFon}
{\sc A.~C. Barroso and I.~Fonseca}, {\em Anisotropic singular perturbations the vectorial case}, Proc. Roy. Soc. Edinburgh Sect. A, 124 (1994), pp.~527--571.

\bibitem{Bou}
{\sc G.~Bouchitt\`{e}}, {\em Singular perturbations of variational problems arising from a two-phase transition model}, Appl. Math. Optim., 21 (1990), pp.~289--314.

\bibitem{Braides}
{\sc A.~Braides}, {\em {$\Gamma$}-convergence for beginners}, vol.~22 of Oxford Lecture Series in Mathematics and its Applications, Oxford University Press, Oxford, 2002.

\bibitem{CaffaDeLa}
{\sc L.~A. Caffarelli and R.~de~la Llave}, {\em Planelike minimizers in periodic media}, Comm. Pure Appl. Math., 54 (2001), pp.~1403--1441.

\bibitem{CarGurSle}
{\sc J.~Carr, M.~Gurtin, and M.~Slemrod}, {\em Structured phase transitions on a finite interval}, Arch. Rational Mech. Anal., 86 (1984), pp.~317--351.

\bibitem{ChaThou}
{\sc A.~Chambolle and G.~Thouroude}, {\em Homogenization of interfacial energies and construction of plane-like minimizers in periodic media through a cell problem}, Networks and Heterogeneous Media, 4 (2009), pp.~127--152.

\bibitem{cioranescu2008periodic}
{\sc D.~Cioranescu, A.~Damlamian, and G.~Griso}, {\em The periodic unfolding method in homogenization}, SIAM Journal on Mathematical Analysis, 40 (2008), pp.~1585--1620.

\bibitem{damlamian2022periodic}
\leavevmode\vrule height 2pt depth -1.6pt width 23pt, {\em The periodic unfolding method for countably many scales.}, Advances in Mathematical Sciences \& Applications, 32 (2023).

\bibitem{CriFonGan_moving_sub}
{\sc R.~Cristoferi, I.~Fonseca, and L.~Ganedi}, {\em Homogenization and phase separation with space dependent wells -- the subcritical case}, Arch. Rational Mech. Anal., 247 (2023).

\bibitem{CriFonGan_fixed_Sup}
\leavevmode\vrule height 2pt depth -1.6pt width 23pt, {\em Homogenization and phase separation with fixed wells -- the supercritical case}, To appear on SIAM SIAM J. Math. Anal.,  (2025).

\bibitem{CriFonHagPop}
{\sc R.~Cristoferi, I.~Fonseca, A.~Hagerty, and C.~Popovici}, {\em A homogenization result in the gradient theory of phase transitions}, Interfaces and Free Boundaries, 21 (2019), pp.~367--408.

\bibitem{CriGra}
{\sc R.~Cristoferi and G.~Gravina}, {\em Sharp interface limit of a multi-phase transitions model under nonisothermal conditions}, Calc. Var. Partial Differential Equations, 60 (2021), pp.~Paper No. 142, 62.

\bibitem{Dalmasobook}
{\sc G.~Dal~Maso}, {\em An introduction to {$\Gamma$}-convergence}, vol.~8 of Progress in Nonlinear Differential Equations and their Applications, Birkh\"{a}user Boston, Inc., Boston, MA, 1993.

\bibitem{DGFra}
{\sc E.~De~Giorgi and T.~Franzoni}, {\em Su un tipo di convergenza variazionale}, Atti Accad. Naz. Lincei Rend. Cl. Sci. Fis. Mat. Natur. (8), 58 (1975), pp.~842--850.

\bibitem{DirLucNov1}
{\sc N.~Dirr, M.~Lucia, and M.~Novaga}, {\em {$\Gamma$}-convergence of the {A}llen-{C}ahn energy with an oscillating forcing term}, Interfaces Free Bound., 8 (2006), pp.~47--78.

\bibitem{DirLucNov2}
\leavevmode\vrule height 2pt depth -1.6pt width 23pt, {\em Gradient theory of phase transitions with a rapidly oscillating forcing term}, Asymptot. Anal., 60 (2008), pp.~29--59.

\bibitem{Don}
{\sc A.~F. Donnarumma}, {\em {$\Gamma$}-convergence and stochastic homogenization of second-order singular perturbation models for phase transitions}, J Nonlinear Sci, 35 (2025), p.~Article number: 14.

\bibitem{EG}
{\sc L.~C. Evans and R.~F. Gariepy}, {\em Measure theory and fine properties of functions}, Textbooks in Mathematics, CRC Press, Boca Raton, FL, revised~ed., 2015.

\bibitem{FonPop}
{\sc I.~Fonseca and C.~Popovici}, {\em Coupled singular perturbations for phase transitions}, Asymptot. Anal., 44 (2005), pp.~299--325.

\bibitem{fonseca1989gradient}
{\sc I.~Fonseca and L.~Tartar}, {\em The gradient theory of phase transitions for systems with two potential wells}, Proceedings of the Royal Society of Edinburgh Section A: Mathematics, 111 (1989), pp.~89--102.

\bibitem{Giusti}
{\sc E.~Giusti}, {\em Minimal surfaces and functions of bounded variation}, vol.~80 of Monographs in Mathematics, Birkh\"{a}user Verlag, Basel, 1984.

\bibitem{Hag}
{\sc A.~Hagerty}, {\em A note on homogenization effects on phase transition problems}.
\newblock arXiv preprint arXiv:1811.07357, 2018.

\bibitem{Ishige}
{\sc K.~Ishige}, {\em Singular perturbations of variational problems of vector valued functions}, Nonlinear Anal., 23 (1994), pp.~1453--1466.

\bibitem{LeoniCNA}
{\sc G.~Leoni}, {\em Gamma convergence and applications to phase transitions}, CNA Lecture Notes, 2013.

\bibitem{Leo_lecture}
\leavevmode\vrule height 2pt depth -1.6pt width 23pt, {\em Gamma convergence and applications to phase transitions. lecture notes}.
\newblock CNA Summer School {``}Topics in Nonlinear PDEs and Calculus of Variations, and Applications in Materials Science{''}, 2013.

\bibitem{Leo}
\leavevmode\vrule height 2pt depth -1.6pt width 23pt, {\em A remark on the compactness for the {C}ahn-{H}illiard functional}, ESAIM Control Optim. Calc. Var., 20 (2014), pp.~517--523.

\bibitem{MaggiBook}
{\sc F.~Maggi}, {\em Sets of finite perimeter and geometric variational problems}, vol.~135 of Cambridge Studies in Advanced Mathematics, Cambridge University Press, Cambridge, 2012.
\newblock An introduction to geometric measure theory.

\bibitem{Mar}
{\sc R.~Marziani}, {\em {$\Gamma$}-convergence and stochastic homogenisation of phase-transition functionals}, ESAIM Control Optim. Calc. Var., 29 (2023), pp.~Paper No. 44, 37.

\bibitem{modica1987gradient}
{\sc L.~Modica}, {\em The gradient theory of phase transitions and the minimal interface criterion}, Archive for Rational Mechanics and Analysis, 98 (1987), pp.~123--142.

\bibitem{ModMor_Esempio}
{\sc L.~Modica and S.~Mortola}, {\em Un esempio di {$\Gamma$}-convergenza}, Boll. Un. Mat. Ital. B (5), 14 (1977), pp.~285--299.

\bibitem{Mor}
{\sc P.~Morfe}, {\em Surface tension and {$\Gamma$}-convergence of {V}an der {W}aals-{C}ahn-{H}illiard phase transitions in stationary ergodic media}, J Stat Phys, 181 (2020), pp.~2225--2256.

\bibitem{MorWag}
{\sc P.~Morfe and C.~Wagner}, {\em Diffuse interface energies with microscopic heterogeneities: Homogenization and rare events}.
\newblock ArXiV: https://arxiv.org/abs/2408.14914.

\bibitem{murray2016slow}
{\sc R.~Murray and M.~Rinaldi}, {\em Slow motion for the nonlocal allen--cahn equation in n dimensions}, Calculus of Variations and Partial Differential Equations, 55 (2016), p.~147.

\bibitem{OweSte}
{\sc N.~C. Owen and P.~Sternberg}, {\em Nonconvex variational problems with anisotropic perturbations}, Nonlinear Anal., 16 (1991), pp.~705--719.

\bibitem{Ste_Sing}
{\sc P.~Sternberg}, {\em The effect of a singular perturbation on nonconvex variational problems}, Arch. Ration. Mech. Anal., 101 (1988), pp.~209--260.

\bibitem{Ste_Vect}
\leavevmode\vrule height 2pt depth -1.6pt width 23pt, {\em Vector-valued local minimizers of nonconvex variational problems}, Rocky Mountain J. Math., 21 (1991), pp.~799--807.

\bibitem{zuniga2016heteroclinic}
{\sc A.~Zuniga and P.~Sternberg}, {\em On the heteroclinic connection problem for multi-well gradient systems}, Journal of Differential Equations, 261 (2016), pp.~3987--4007.

\end{thebibliography}
            
\end{document}